%
%
%
%
%
%
\RequirePackage{fix-cm}
\documentclass[smallextended]{svjour3}       
\smartqed  

\usepackage{graphicx}      

\usepackage{lmodern,enumerate,rotating}
\usepackage{amsmath,amsfonts,amssymb,subfigure,mathtools,blkarray}
\usepackage{tikz-cd}
\usetikzlibrary{cd}
\usetikzlibrary{shapes,shapes.geometric,arrows,fit,calc,positioning,automata}

\DeclarePairedDelimiter\ceil{\lceil}{\rceil}

\def\nUparrow{\rotatebox[]{90}{$\nRightarrow$}}

\DeclareMathOperator{\Acc}{Acc}
\DeclareMathOperator{\CCa}{CC_A}
\DeclareMathOperator{\CCn}{CC_N}
\DeclareMathOperator{\CCne}{CC_N^E}
\DeclareMathOperator{\Obs}{Obs}

\DeclareMathOperator{\Bifur}{Bifur}

\allowdisplaybreaks

\newcommand{\Z}{\mathbb{Z}}

\newcommand{\N}{\mathbb{N}}

\newcommand{\LM}{\mathcal{L}}

\newcommand{\Tt}{\mathcal{T}}

\newcommand{\ep}{\epsilon}

\newcommand{\Scal}{\mathcal{S}}

\newcommand{\Sig}{\Sigma}
\newcommand{\s}{\sigma}

\newcommand{\Rt}{\mathcal{R}}
\newcommand{\Mt}{\mathcal{M}}

\newtheorem{assumption}{Assumption}

\newtheorem{remark}{Remark}

\newtheorem{example}{Example}
\usepackage{times,amsmath,amssymb,graphicx,tikz,algorithm,algorithmic,url,hhline,booktabs}
\usetikzlibrary{automata,arrows,positioning}
\usetikzlibrary{petri}

\makeatletter
\newcommand{\subalign}[1]{%
  \vcenter{%
    \Let@ \restore@math@cr \default@tag
    \baselineskip\fontdimen10 \scriptfont\tw@
    \advance\baselineskip\fontdimen12 \scriptfont\tw@
    \lineskip\thr@@\fontdimen8 \scriptfont\thr@@
    \lineskiplimit\lineskip
    \ialign{\hfil$\m@th\scriptstyle##$&$\m@th\scriptstyle{}##$\crcr
      #1\crcr
    }%
  }
}
\makeatother

\usepackage[colorlinks]{hyperref}

\definecolor{green}{rgb}{0.1,0.7,0.1}

%
%
%
%
%
\begin{document}

\tikzset{elliptic state/.style={draw,ellipse}}

\title{On detectability of labeled Petri nets and finite automata\thanks{The main results
on labeled Petri nets shown in Section~\ref{sec:WeakDetect} were presented at WODES'18
\cite{Zhang2018WODESDetectabilityLPS}. Compared to the original conference version
\cite{Zhang2018WODESDetectabilityLPS},
the current journal version has been significantly expanded. Especially the results in
Section~\ref{sec:StrongDetectabilityLPN} (which have not been submitted anywhere yet)
cannot be obtained by using techniques used in Section~\ref{sec:WeakDetect}.
This work was partially supported by National Natural Science Foundation of China (No. 61603109),
and Natural Science Foundation of Heilongjiang Province of China (No. LC2016023).
}
}


\author{Kuize Zhang \and
        Alessandro Giua
}


\institute{K. Zhang \at
			  School of Electrical Engineering and Computer Science, KTH Royal Institute of Technology, 10044 Stockholm, Sweden\\
			  College of Automation, Harbin Engineering University, Harbin 150001, P.R. China \\
              \email{zkz0017@163.com, kuzhan@kth.se}           
           \and
           A. Giua \at
			  Department of Electrical and Electronic Engineering, University of Cagliari, 09123 Cagliari, Italy\\
			  \email{giua@diee.unica.it}
}

\date{Received: date / Accepted: date}

\maketitle

\begin{abstract}
		Detectability is a basic property of dynamic systems:
	when it holds an observer can use the \emph{current} and \emph{past} values of the observed output signal produced by a system to reconstruct its \emph{current} state.

	In this paper, we consider properties of this type in the framework of discrete-event systems
	modeled by labeled Petri nets and finite automata.
	We first study {\it weak approximate detectability}.
	This property implies that there exists an infinite observed output sequence of
	the system such tcheck each prefix of the output sequence with length
	greater than a given value allows an observer to determine if the current state belongs to a given set.
	We prove that the problem of verifying this property is undecidable
	for labeled Petri nets,
	and PSPACE-complete for finite automata.

	We also consider two new concepts called {\it instant strong detectability} and
	{\it eventual strong detectability}.
	The former property implies that for each possible infinite observed output sequence
	each prefix of the output sequence allows reconstructing the current state.
	The latter implies that for each possible infinite observed output sequence,
	there exists a value such that
	each prefix of the output sequence with length greater than that value allows reconstructing
	the current state. We prove that for labeled Petri nets,
	the problems of verifying instant strong detectability and eventual strong detectability
	are decidable and EXPSPACE-hard, where the decidability result for eventual strong detectability
	holds under a mild promptness assumption, but the result for instant strong detectability
	holds without any assumption.
	For finite automata, we give polynomial-time verification algorithms for 
	both properties. We also give a polynomial-time verification algorithm for strong detectability
	of finite automata, which strengthens the corresponding result given by [Shu and Lin 2011] based on 
	the usual assumptions of deadlock-freeness and promptness
	(collected in Assumption \ref{assum1_Det_PN}).
	In addition, we prove that strong detectability is strictly stronger
	than eventual strong detectability, but strictly weaker than
	instant strong detectability, for labeled Petri nets and even for deterministic finite automata
	satisfying Assumption \ref{assum1_Det_PN}.
	In particular, for deterministic finite automata 
	such that every event can be directly observed, 
	we prove that eventual strong detectability is equivalent to strong detectability.

	\keywords{Labeled Petri net\and Finite automaton \and Weak approximate detectability \and Instant strong detectability \and Eventual strong detectability \and Decidability \and Complexity}
\end{abstract}

\section{Introduction}

{\it Detectability} is a basic property of dynamic systems: when it holds an observer can use the \emph{current} and \emph{past} values of the observed output signal produced by a system to reconstruct its \emph{current} state
\cite{Giua2002ObservabilityPetriNets,Shu2007Detectability_DES,Shu2011GDetectabilityDES,Shu2013DelayedDetectabilityDES,Fornasini2013ObservabilityReconstructibilityofBCN,Xu2013ObserverFA_STP,Zhang2015WPGRepresentationReconBCN,RuHadjicostis2010OptimalStructrualObservabilityPetriNets,Yin2017DetectabilityModularDES,Masopust2018ComplexityDetectabilityDES,Keroglou2015DetectabilityStochDES}.
This property plays a fundamental role in many related
control problems such as observer design and controller synthesis. Hence for different applications,
it is meaningful to characterize different notions of detectability.
This property also has different terminologies, e.g.,
in \cite{Giua2002ObservabilityPetriNets,Xu2013ObserverFA_STP,RuHadjicostis2010OptimalStructrualObservabilityPetriNets},
it is called ``observability'' while  in
\cite{Fornasini2013ObservabilityReconstructibilityofBCN,Zhang2015WPGRepresentationReconBCN},
it is called ``reconstructibility''. In this paper, we uniformly call this property ``detectability'', and call
another similar property ``observability'' implying that the {\it initial state} can be determined
by the observed output signal produced by a system (e.g., \cite{Yin2017InitialStateDetectabilityStoDES,Shu2013IDetectabilityDES,Zhang2017ObservabilityFLTS,Zhang2014ObservabilityofBCN}).

\subsection{Literature review}

\subsubsection*{Finite automata}

For {\it discrete-event systems} (DESs) modeled by
{\it finite automata}, the detectability problem has been widely studied
\cite{Shu2007Detectability_DES,Shu2011GDetectabilityDES,Zhang2017PSPACEHardnessWeakDetectabilityDES,Masopust2018ComplexityDetectabilityDES,Yin2017DetectabilityModularDES} 
in the context of {\it $\omega$-languages}, i.e., taking into account all output sequences
of infinite length
generated by a DES. These results are usually based on two assumptions that a system is deadlock-free and that it cannot generate an infinitely long subsequence of unobservable events. These requirements are collected in Assumption~\ref{assum1_Det_PN} formally stated in the following sections: when it holds, a system will always run and generate an infinitely long observation.

Two fundamental definitions are those of {\it strong detectability} and {\it weak detectability}
\cite{Shu2007Detectability_DES}.
Strong detectability implies\footnote{Formal definitions of strong and weak detectability are given later in
Definitions \ref{def1_Det_PN} and \ref{def4_Det_PN}.} that:
\begin{quote}
  (A)  there exists a positive integer $k$ such that for all infinite output sequences $\sigma$ generated by a system, all  prefixes of $\sigma$ of length greater than $k$  allow reconstructing the current states.
\end{quote}
Weak detectability implies that:
\begin{quote}
  (B)  there exists a positive integer $k$ and some infinite output sequence $\sigma$ generated by a system such that all prefixes of $\sigma$ of length greater than $k$ allow reconstructing the current states.
\end{quote}

Weak detectability is strictly weaker than strong detectability. Consider
the finite automaton shown in Fig. \ref{fig3_Det_PN}, where events $a$ and $b$ can be directly observed.
It is weakly detectable but not strongly detectable.
The automaton can generate infinite event sequences $a^{\omega}$ and $b^{\omega}$, where
$(\cdot)^{\omega}$ denotes the concatenation of infinitely many copies of $\cdot$.
When any number of $a$'s were observed but no $b$ was observed,
the automaton could be only in state $s_0$. Hence it is weakly detectable. When
any number of $b$'s were observed but no $a$ was observed, it could be in states $s_1$ or $s_2$.
Hence it is not strongly detectable.

Strong detectability can be verified in polynomial time while weak detectability can
be verified in exponential time \cite{Shu2007Detectability_DES,Shu2011GDetectabilityDES}
the usual Assumption \ref{assum1_Det_PN}. 

In addition, checking weak detectability is PSPACE-complete in the numbers of states and events
for finite automata, where the hardness result holds for deterministic finite automata
whose events can be directly observed 
\cite{Zhang2017PSPACEHardnessWeakDetectabilityDES}. The hardness result even holds for
more restricted deterministic finite automata
having only two events that can be directly observed \cite{Masopust2018ComplexityDetectabilityDES}.

	\begin{figure}
        \centering
\begin{tikzpicture}[>=stealth',shorten >=1pt,auto,node distance=1.5 cm, scale = 1.0, transform shape,
	->,>=stealth,inner sep=2pt,state/.style={shape=circle,draw,top color=red!10,bottom color=blue!30}]

	\node[initial,state] (1) {$s_0$};
	\node[state] (2) [above right of =1] {$s_1$};
	\node[state] (3) [below right of =1] {$s_2$};
	
	\path [->]
	(1) edge [loop below] node {$a$} (1)
	(1) edge node {$b$} (2)
	(1) edge node {$b$} (3)
	(2) edge [loop right] node {$b$} (2)
	(3) edge [loop right] node {$b$} (3)
	;

        \end{tikzpicture}
		\caption{A finite automaton.}
	\label{fig3_Det_PN}
	\end{figure}
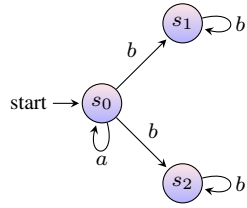

\subsubsection*{Petri nets}


Detectability of free-labeled Petri nets with unknown initial {\it markings}
(i.e., states)
has been studied in \cite{Giua2002ObservabilityPetriNets}, where several types of detectability
called ``(strong) marking observability'', ``uniform (strong) marking observability'',
and ``structural (strong) marking observability'' are proved to be decidable\footnote{In the sequel,
we will always use the expression ``a property is decidable/undecidable'' instead of
``the problem of verifying the property
is decidable/undecidable.''} by reducing them
to several decidable home space properties \cite{Escrig1989DecidabilityHomeSpaceProperty}
that are more general than the reachability problem
of Petri nets (with respect to a given marking).

Some detectability properties of {\it labeled Petri nets}\footnote{More precisely  {\it labeled place/transition nets} or \emph{labeled P/T nets} for short} have also been studied.
In \cite{RuHadjicostis2010OptimalStructrualObservabilityPetriNets}, a notion of detectability called
``structural observability'' is characterized. This property implies that for every initial marking,
each observed {\it label} (i.e., output) sequence determines the current marking.
It is pointed out that the ``structural observability'' is important, because ``the majority of existing control
schemes for Petri nets rely on complete knowledge of the system state at any given time step''
\cite{RuHadjicostis2010OptimalStructrualObservabilityPetriNets}.
It is shown that
structural observability can be verified in polynomial time
\cite{RuHadjicostis2010OptimalStructrualObservabilityPetriNets}.
In the same paper, in order to make a labeled Petri net structurally observable,
the problem of placing the minimal number
of sensors on places and the problem of placing the minimal number of sensors on transitions are studied,
respectively. The former problem
is proved to be NP-complete, while the latter is shown to be solvable in polynomial time, both in the numbers of
places and transitions.

In \cite{Jancar1994BisimilarityPetriNet}, for labeled Petri nets,
a concept of determinism is characterized, where this concept implies that each label sequence
generated by a net can be used to determine the current marking. It is proved that verifying determinism
is as hard as verifying {\it coverability} for Petri nets \cite{Rackoff19782EXPSPACECoverabilityPetriNets,Lipton1976ReachabilityPetriNetsEXPSPACE-hard},
hence EXPSPACE-complete.
Note that the ``structural observability'' studied in
\cite{RuHadjicostis2010OptimalStructrualObservabilityPetriNets} requires a labeled Petri net
to satisfy the determinism property at each initial marking.

The above mentioned detectability results for labeled Petri nets apply to finite-length
languages of the nets,
i.e., the set of all words (of finite length) that a net can generate.
In the sequel, we always use terminology ``language'' to denote ``finite-length language'' for short,
and use ``$\omega$-language'' to denote a ``language'' consisting of several infinite-length label sequences.
However, a few authors have recently studied detectability properties of $\omega$-languages
extending to labeled Petri net models the notions of strong
and weak detectability which Shu and Lin have originally studied in the context of finite automata.



Weak detectability of labeled Petri nets with inhibitor arcs has been proved to be undecidable in
\cite{Zhang2018WODESDetectabilityLPS} by reducing the well known undecidable {\it language equivalence
problem} \cite[Theorem 8.2]{Hack1976PetriNetLanguage} of labeled Petri nets to the inverse problem of the weak
detectability problem, i.e., the non-weak detectability problem.

Decidability and complexity of strong detectability and weak detectability
for labeled Petri nets are also studied in \cite{Masopust2018DetectabilityPetriNet}.
Under \eqref{item11_Det_PN} of Assumption \ref{assum1_Det_PN} and another assumption that
a net cannot generate an infinite unobservable sequence which is actually equivalent to
\eqref{item12_Det_PN} of Assumption \ref{assum1_Det_PN} for Petri nets,
strong detectability has been
proved to be decidable with EXPSPACE-hard complexity
by reducing its negation to the satisfiability of a
{\it Yen's path formula} \cite{Yen1992YenPathLogicPetriNet,Atig2009YenPathLogicPetriNet}.
Weak detectability has been proved to be undecidable by reducing the undecidable
{\it language inclusion problem} \cite[Theorem 8.2]{Hack1976PetriNetLanguage} to the non-weak detectability problem,
thus improving the related result given in \cite{Zhang2018WODESDetectabilityLPS}.

\subsection{Contribution of the paper}

In this paper, we propose some new notions of detectability in the context of $\omega$-languages, and characterize the related decision problems (in terms of decidability or computational complexity) for both finite automata and labeled Petri nets.

To motivate the interest for this work, let us recall that the theory of $\omega$-languages is a rich and important domain of computer science \cite{Pin2004InfiniteWords}. We mention, in addition, that these languages have a practical interest in automatic control because they can describe the infinite behavior of a system: for this reason they find significant applications in the very active area of verification with discrete-event and hybrid systems --- in particular model checking with temporal logic.



\subsubsection*{Instant detectability}

The notions of strong and weak detectability considered in \cite{Shu2007Detectability_DES,Shu2011GDetectabilityDES,Masopust2018DetectabilityPetriNet}
assume that an observer may be able to reconstruct the current state of a system only after a transient period characterized by a number $k$ of generated outputs/labels. 
However, in many applications, e.g., those concerning safety-critical systems,
it may be necessary to reconstruct  the current value of the state at all times and thus 
this transient should have length $k=0$. We denote this notion by \emph{instant detectability}.

It may be possible to consider this notion in different settings. When languages are
considered,
the strong version of this property is equivalent to the classical notion of determinism 
\cite{Jancar1994BisimilarityPetriNet}. In the case of Petri nets, the further requirement that
the property holds for every initial marking leads to even stronger notion of structural 
observability \cite{RuHadjicostis2010OptimalStructrualObservabilityPetriNets}.

In the case of $\omega$-languages, the stronger version of this property is strictly weaker
than determinism as we will show in Fig. \ref{fig1_Det_PN}.
In this paper, we study \emph{instant strong detectability} which implies that all prefixes
of all infinite output sequences generated by a system allow reconstructing 
the current states. This notion has been studied in \cite{Shu2013DelayedDetectabilityDES} for finite automata satisfying Assumption \ref{assum1_Det_PN} and is called  $(0,0)$-detectability.
Actually, a more general $(k_1,k_2)$-detectability is characterized in 
\cite{Shu2013DelayedDetectabilityDES}
which describes strong detectability with computation delays, 
and a polynomial-time verification algorithm is given under Assumption \ref{assum1_Det_PN}.


We will prove that instant strong detectability of labeled P/T nets is decidable, by reducing its negation to the satisfiability of a Yen's path formula.
We will also prove that the corresponding decision problem is EXPSPACE-hard by reducing the coverability problem of Petri nets to the non-instant strong detectability problem.
For finite automata, we will give a polynomial-time verification 
algorithm for instant strong detectability without any 
assumption by using a concurrent-composition method, which strengthens the corresponding 
algorithm given in \cite{Shu2013DelayedDetectabilityDES} under Assumption \ref{assum1_Det_PN}.

We point out that it may also be possible to consider the dual notion of
\emph{instant weak detectability} which implies that there exists some generated infinite output
sequence such that all its prefixes allow reconstructing the current states.
However, we are not going to study this property in this paper.

\subsubsection*{Eventual detectability}

Let us consider again the notion of strong dectability implied by condition (A) stated above. An alternative definition could be based on the following definition:
\begin{quote}
  (A')  for every infinite output sequence $\sigma$ generated by a system,
  there exists a positive integer $k_{\sigma}$ such that all  prefixes of $\sigma$ of 
  length greater than $k_{\sigma}$  allow reconstructing the current states,
\end{quote}
where the length $k_{\sigma}$ of the transient before the state can be reconstructed 
may depend on a particular output sequence $\sigma$.

Obviously, condition (A) implies condition (A') but the converse implication does not hold, because there may exist infinitely many strings of infinite length and thus a maximal value among all $k_{\sigma}$ may not be computed (this will be formally proved in Proposition~\ref{prop5_Det_PN}).

We point out some similarities with the notion of diagnosability introduced by
Lafortune and co-authors \cite{Sampath1995DiagnosabilityDES} which requires
the occurrence of a fault to be detected within a finite delay.
The original definition in \cite{Sampath1995DiagnosabilityDES} assumes this delay may
depend on the string that produces the fault, i.e., it is similar to condition (A') above.
A different condition, similar to condition (A) above and called $K$-step diagnosability,
is considered in \cite{Cabasino2012DiagnosabilityPetriNet}:
it assumes the length of the delay is bounded for all strings.
Note however a difference with respect to the detectability results we present here:
the two notions of diagnosability and $K$-step diagnosability are equivalent in the case
of finite automata, thanks to the well-known Myhill-Nerode characterization of a regular
language by the finiteness of its set of
residuals. They only differ for infinite-state systems, such as labeled Peri nets.



Based on condition (A'), we consider a new type of detectability, which we call {\it eventual strong detectability}.
Formally, eventual strong detectability implies that for every infinite
output sequence $\s$ generated by a system, there exists a positive integer $k_{\s}$
such that each
prefix $\s'$ of $\s$ with length greater than $k_{\s}$
allows reconstructing the current state.
We will prove that eventual strong detectability is strictly weaker than strong detectability
and strictly stronger than weak detectability, for labeled Petri nets and even for deterministic
finite automata satisfying Assumption \ref{assum1_Det_PN}.

We will also prove that eventual strong detectability can be verified in polynomial time for finite automata.
For labeled Petri nets, we show that the property is decidable and the corresponding decision problem is EXPSPACE-hard:
note that this decidability result holds  under the \emph{promptness} assumption
(collected in \eqref{item6_Det_PN} of Assumption \ref{assum2_Det_PN}) that is actually 
equivalent to condition \eqref{item12_Det_PN} of Assumption~\ref{assum1_Det_PN} for labeled Petri nets.

\subsubsection*{Approximate detectability}

State estimation is usually a preliminary step that a plant operator must address so that,
depending on the state value, a suitable action may be taken. Examples include computing a
control input in supervisory control, raising an alarm in fault diagnosis, inferring a secret
in an opacity problem, reacting to the detection of a cyber-attack, etc.
The number of these possible actions is usually finite and this naturally determines a
finite partition of the system's state space into equivalence classes, each one
corresponding to states for which the same action should be taken. In such a context,
it is not necessary to solve a detectability problem, i.e., determine the exact value of the state,
but just to solve an approximate version of it, i.e., determine to which class the state belongs.

The notion of approximate detectability applies to all previously defined detectability notions,
weak or strong, instant or eventual. Here we just study one of them, namely  
{\it weak approximate detectability} which implies that, given a finite partition of the 
state space, there exists an integer $k$ and an infinite output sequence generated by 
a system each of whose prefixes of
length greater than $k$ allows determining the partition cell to which the current state belongs.
In this paper, we will prove that
weak approximate detectability is undecidable for labeled P/T nets.
For finite automata, we will prove that deciding this property is PSPACE-complete.
The undecidable result is obtained by
reducing the undecidable language equivalence problem
for labeled P/T nets to negation  of the weak approximate detectability problem.
The result for finite automata is obtained by using related results for
weak detectability of finite automata 
\cite{Zhang2017PSPACEHardnessWeakDetectabilityDES,Shu2007Detectability_DES}.

\subsection{Paper structure}

To help the reader better understand the contribution of the paper, the relations among the different detectability properties studied in this work are shown in
Tabs. \ref{tab1:_Det_PN} and \ref{tab2:_Det_PN}. The table also includes known results on strong detectability
and weak detectability of finite automata and labeled Petri nets proved in
\cite{Masopust2018DetectabilityPetriNet,Zhang2017PSPACEHardnessWeakDetectabilityDES}.

\begin{table}
	\centering
	\begin{tabular}{ccccc}
		\hline
		\begin{tabular}{c}Instant\\ strong\\ detectability\\
			\hline
			decidable\\ (Thm. \ref{thm3_Det_PN})\\
			\hline
			EXPSPACE-hard\\ (Thm. \ref{thm3_Det_PN})
		\end{tabular}& $\begin{tabular}{c} $\Rightarrow$\\
			$\nLeftarrow$\\(Fig. \ref{fig8_Det_PN})\end{tabular}$ &
		\begin{tabular}{c}Strong\\ detectability \\
			\hline
			decidable\\ (\cite{Masopust2018DetectabilityPetriNet})\\
			\hline
			EXPSPACE-hard\\ (\cite{Masopust2018DetectabilityPetriNet})
		\end{tabular}& $\begin{tabular}{c} $\Rightarrow$\\
			$\nLeftarrow$\\(Fig. \ref{fig5_Det_PN})\end{tabular}$ &
		\begin{tabular}{c}Eventual\\ strong\\ detectability\\
			\hline
			decidable\\ (Thm. \ref{thm4_Det_PN})\\
			\hline
			EXPSPACE-hard\\ (Thm. \ref{thm4_Det_PN})
		\end{tabular}\\
		\hline
		&&&& \begin{tabular}{c}$\Downarrow$\\ $\nUparrow$ (Fig. \ref{fig9_Det_PN})\\
		\end{tabular}\\\hline &&
		\begin{tabular}{c}
			Weak\\ approximate\\ detectability\\\hline
			undecidable\\
			(Thm. \ref{thm2_Det_PN})
		\end{tabular} &$\begin{tabular}{c} $\Leftarrow$\\
			$\nRightarrow$\\(Fig. \ref{fig10_Det_PN})\end{tabular}$
		&
		\begin{tabular}{c}
			Weak \\ detectability\\\hline
			undecidable\\ (\cite{Masopust2018DetectabilityPetriNet})
		\end{tabular}\\
		\hline
	\end{tabular}
	\caption{Relationships among different detectability notions for labeled Petri nets, where
	$\Rightarrow$ means ``imply by definition'', $\nRightarrow$ means ``does not imply'',
	the decidability result for strong detectability proved in \cite{Masopust2018DetectabilityPetriNet}
	is based on Assumption \ref{assum1_Det_PN}, and can be strengthened to hold only based on the 
	promptness assumption which is actually \eqref{item12_Det_PN} of Assumption \ref{assum1_Det_PN} for 
	labeled Petri nets by using our proposed extended concurrent composition method
	similarly as in the proof of Theorem \ref{thm3_Det_PN}.
	The decidability result for eventual strong detectability is also based on the promptness 
	assumption.}
	\label{tab1:_Det_PN}
\end{table}

\begin{table}
	\centering
	\begin{tabular}{ccccc}
		\hline
		\begin{tabular}{c}Instant\\ strong\\ detectability\\
			\hline
			$O(s^2e)$\\ (Thm. \ref{thm6_Det_PN})
		\end{tabular}& $\begin{tabular}{c} $\Rightarrow$\\
			$\nLeftarrow$\\(Fig. \ref{fig8_Det_PN})\end{tabular}$ &
		\begin{tabular}{c}Strong\\ detectability \\
			\hline
			$O(s^4e^2)$\\ (Thm. \ref{thm7_Det_PN})
		\end{tabular}& $\begin{tabular}{c} $\Rightarrow$\\
			$\nLeftarrow$\\(Fig. \ref{fig5_Det_PN})\end{tabular}$ &
		\begin{tabular}{c}Eventual\\ strong\\ detectability\\
			\hline
			$O(s^4e^2)$\\ (Thm. \ref{thm8_Det_PN})
		\end{tabular}\\
		\hline
		&&&& \begin{tabular}{c}$\Downarrow$\\ $\nUparrow$ (Fig. \ref{fig9_Det_PN})\\
		\end{tabular}\\\hline &&
		\begin{tabular}{c}
			Weak\\ approximate\\ detectability\\\hline
			PSPACE-complete\\
			(Thm. \ref{thm5_Det_PN})
		\end{tabular} &$\begin{tabular}{c} $\Leftarrow$\\
			$\nRightarrow$\\(Fig. \ref{fig10_Det_PN})\end{tabular}$
		&
		\begin{tabular}{c}
			Weak \\ detectability\\\hline
			PSPACE-complete\\ (\cite{Zhang2017PSPACEHardnessWeakDetectabilityDES})
		\end{tabular}\\
		\hline
	\end{tabular}
	\caption{Relationships among different detectability notions for finite automata, where
	$s$ and $e$ are the numbers of states and events,
	$\Rightarrow$ means ``imply by definition'', $\nRightarrow$ means ``does not imply'';
	the polynomial-time verification algorithm for strong detectability 
	given in \cite{Shu2011GDetectabilityDES} applies to finite automata satisfying 
	Assumption \ref{assum1_Det_PN}, but generally does not apply to finite automata not satisfying
	Assumption \ref{assum1_Det_PN}; the exponential-time verification algorithm
	for weak detectability given in \cite{Shu2011GDetectabilityDES} actually applies to finite automata
	satisfying the assumption of non-emptiness of generated $\omega$-languages that is weaker than 
	Assumption \ref{assum1_Det_PN}, and in this paper we will characterize how to verify the weaker 
	assumption and how to deal with the case when the weaker assumption is not satisfied.}
	\label{tab2:_Det_PN}
\end{table}

The remainder of the paper is as follows. Section~\ref{sec:pre} introduces necessary
preliminaries, including finite automata, labeled Petri nets, the language equivalence problem,
and the coverability problem, together with necessary 
tools such as Dickson's lemma, Yen's path formulae, etc.
Section~\ref{sec:WeakDetect} collects the results on weak approximate detectability for 
finite automata and labeled Petri nets.
Section~\ref{sec:StrongDetectabilityLPN} consists of the results on instant strong detectability
and eventual strong detectability also for both models, and a new verification 
algorithm for strong detectability of finite automata. 
Section~\ref{sec:conc} ends up with a short conclusion.
We first study weak approximate detectability because fewer tools are needed than in studying
instant strong detectability and eventual strong detectability.


\section{Preliminaries}\label{sec:pre}





\subsection{Labeled state-transition systems}

In order to formulate detectability notions in a uniform manner, we introduce
{\it labeled state-transition systems} (LSTSs) as follows, which contain finite automata
and labeled Petri nets as special cases.
An LSTS is formulated as a sextuple $${\mathcal S}=(X,T,X_0,\to,\Sig,\ell),$$
where $X$ is a set of states, $T$ a set of events,  $X_0\subset X$ a set of initial states,
$\to\subset X\times T\times X$ a transition relation, $\Sig$ a set of outputs (labels),
and $\ell:T\to\Sig\cup\{\epsilon\}$
a {\it labeling} function, where $\epsilon$ denotes the empty word.
As usual, we use $\ell^{-1}(\sigma)$ to denote the {\it preimage} $\{t\in T|\ell(t)=\sigma\}$ 
of an output $\sigma\in\Sig$.
A state $x\in X$ is called {\it deadlock} if 
$(x,t,x')\notin \to$ for any $t\in T$ and $x'\in X$.
$\mathcal{S}$ is called {\it deadlock-free} if it has no deadlock state.
Events with label $\epsilon$ are called
{\it unobservable}. Other events are called {\it observable}.
Denote $T=:T_o\dot{\cup} T_{\ep}$, where $T_o$ and $T_{\ep}$ are the sets of 
observable events, and unobservable events, respectively.
For an observable event $t\in T$, we say $t$ {\it can be directly observed} if $\ell(t)$
differs from $\ell(t')$ for any other $t'\in T$. 
Labeling function $\ell:T\to\Sig\cup\{\ep\}$ can be recursively
extended to $\ell:T^*\cup T^{\omega}\to\Sig^*\cup\Sig^{\omega}$ as $\ell(t_1t_2\dots)=\ell(t_1)
\ell(t_2)\dots$ and $\ell(\ep)=\ep$.
For all $x,x'\in X$ and $t\in T$,
we also denote $x\xrightarrow[]{t}x'$ if $(x,t,x')\in\to$. More generally, we denote all transitions
$x\xrightarrow[]{t_1}x_1$, $x_1\xrightarrow[]{t_2}x_2$, $\dots$, $x_{n-1}\xrightarrow[]{t_n}x_n$
by $x\xrightarrow[]{t_1 \dots t_n}x_n$ for short, where $n$ is a positive integer.
We say {\it a state $x'\in X$ is reachable from a state}
$x\in X$ if there exist $t_1,\dots,t_n\in T$ such that
$x\xrightarrow[]{t_1\dots t_n}x'$, where $n$ is a positive integer. 
We say {\it a subset $X'$ of $X$ is reachable from a state} $x\in X$ if some state of $X'$ is 
reachable from $x$. Similarly {\it a state $x\in X$ is reachable from a subset $X'$ of $X$}
if $x$ is reachable from some state of $X'$.
We call a state $x\in X$
{\it reachable} if either $x\in X_0$ or it is reachable from an initial state.
For an LSTS $\Scal$, we call the new LSTS the {\it accessible part} 
(denoted by $\Acc(\Scal)$) of $\Scal$
that is obtained from $\Scal$ by removing all non-reachable states.
An LSTS $\cal S$ is called {\it deterministic} if for all $x,x',x''\in X$ and all $t\in T$,
if $(x,t,x')\in\to$ and $(x,t,x'')\in\to$ then $x'=x''$.

Next we introduce necessary notions that will be used throughout this paper.
Symbols $\N$ and $\Z_{+}$ denote the sets of natural numbers and positive integers, respectively.
For a set $S$, $S^*$ and $S^{\omega}$ are used to denote the sets of finite sequences
(called {\it words}) of elements of $S$ including the empty word $\epsilon$
and infinite sequences (called {\it configurations}) of elements of $S$,
respectively. As usual, we denote $S^{+}=S^*\setminus\{\epsilon\}$.
For a word $s\in S^*$,
$|s|$ stands for its length, and we set $|s'|=+\infty$ for all $s'\in S^{\omega}$.
For $s\in S$ and natural number $k$, $s^k$ and $s^{\omega}$ denote the $k$-length word and configuration
consisting of copies of $s$'s, respectively.
For a word (configuration) $s\in S^*(S^{\omega})$, a word $s'\in S^*$ is called a {\it prefix} of $s$,
denoted as $s'\sqsubset s$,
if there exists another word (configuration) $s''\in S^*(S^{\omega})$ such that $s=s's''$.
For two natural numbers $i\le j$, $[i,j]$ denotes the set of all integers between $i$ and $j$ including $i$ and $j$;
and for a set $S$, $|S|$ its cardinality and $2^S$ its power set.
For a word $s\in S^*$, where $S=\{s_1,\dots,s_n\}$,
$\sharp(s)(s_i)$ denotes the number of $s_i$'s occurrences in $s$, $i\in[1,n]$.

For each $\s\in\Sig^*$, we denote by $\Mt({\cal S},\s)$ the set of
states that the system can be in after $\s$ has been observed, i.e., 
$\Mt({\cal S},\s):=
\{x\in X|(\exists x_0\in X_0)(\exists s\in T^+)[
(\ell(s)=\s)\wedge(x_0\xrightarrow[]{s}x)]\}$.
In addition, we set $\Mt({\cal S},\epsilon):=\Mt({\cal S},\epsilon)\cup X_0$.
Particularly, for all $X'\subset X$ we denote $\Mt(X',\ep):=X'\cup\{x\in X|(\exists x'\in X')
(\exists s\in T^+)[(\ell(s)=\ep)\wedge(x'\xrightarrow[]{s}x)]\}$;
and for all $\sigma\in\Sig^+$, we denote  $\Mt(X',\sigma):=\{
x\in X|(\exists x'\in X')(\exists s\in T^+)[(\ell(s)=\sigma)
\wedge(x'\xrightarrow[]{s}x)]\}$.
$\LM({\cal S})$ denotes the {\it language generated} by system $\cal S$,
i.e., $\LM({\cal S}):=\{\s\in\Sig^*|\Mt({\cal S},\s)\ne\emptyset\}$.
An infinite event sequence $t_1t_2\dots$$\in T^{\omega}$ is called {\it generated by}
$\cal S$ if there exist states $x_0,x_1,\dots$$\in X$ with $x_0\in X_0$ such that
for all $i\in\N$, $(x_i,t_{i+1},x_{i+1})\in\to$.
We use $\LM^{\omega}({\cal S})$ to denote the $\omega$-{\it language generated} by $\cal S$,
i.e., $\LM^{\omega}(\mathcal{ S}):=\{\s\in\Sigma^{\omega}|(\exists t_1t_2\dots$$\in T^{\omega}
\text{ generated by }\mathcal{S})[\ell(t_1t_2\dots)$$=\s]\}$.

\subsection{Finite automata}

A DES can be modeled by a finite automaton or a labeled Petri net.
In order to represent a DES, we consider a {\it finite automaton} as a finite LSTS
${\mathcal S}=(X,T,X_0,\to,\Sig,\ell)$, i.e.,
when $X,T,\Sig$ are finite.
Such a finite automaton is also obtained from
a standard finite automaton \cite{Sipser2006TheoryofComputation} by removing all accepting states,
replacing a unique initial state
by a set $X_0$ of initial states, and adding a labeling function $\ell$.
{\it In the sequel, a finite automaton always means a finite LSTS.}
Transitions $x\xrightarrow[]{t}x'$
with $\ell(t)=\ep$ are called {\it $\ep$-transitions} (or {\it unobservable transitions}), and other transitions are called
{\it observable transitions}.

\subsection{Labeled Petri nets}

A {\it net} is a quadruple $N=(P,T,Pre,Post)$, where $P$ is a finite set of {\it places}
graphically represented by circles;
$T$ is a finite set of {\it transitions} graphically represented by bars; $P\cup T\ne\emptyset$, $P\cap  T = \emptyset$;
$Pre: P \times T\to \N$ and $Post : P \times T \to \N$ are the {\it pre-} and {\it post-incidence functions} 
that specify the arcs directed from places to transitions, and vice versa.
Graphically $Pre(p,t)$ is the weight of
the arc $p\to t$ and $Post(p,t)$ is the weight of the arc $t\to p$ for all $(p,t)\in P\times T$.
The {\it incidence function} is defined as $C = Post - Pre$.

A {\it marking} is a map $M: P \to \N$ that assigns to each place of a net a natural number of tokens,
graphically represented by black dots. For a marking $M\in \N^P$, the restriction of $M$ to a subset $P'$
of $P$ is denoted by $M|_{P'}$.
For a marking $M\in \N^ P$, a transition $t\in T$ is called {\it enabled} at $M$ if $M(p)\ge Pre(p,t)$
for all $p\in P$, and is denoted by $M[t\rangle $, where as usual $\N^P$ denotes the set
of maps from $P$ to $\N$. 
An enabled transition $t$ at $M$ may {\it fire} and yield a new making $M'(p)=M(p)+C(p,t)$
for all $p\in P$,
written as $M[t\rangle M'$. As usual, we assume that at each marking and each time step, at most one transition fires.
For a marking $M$, a sequence $t_1\dots t_n$ of transitions is called enabled at $M$ if
$t_1$ is enabled at $M$,
$t_2$ is enabled at the unique $M_2$ satisfying $M[t_1\rangle M_2$, \dots,
$t_n$ is enabled at the unique $M_{n-1}$ satisfying $M[t_1\rangle\cdots[t_{n-1}\rangle M_{n-1}$.
We write the firing of $t_1\dots t_n$ at $M$ as $M[t_1\dots t_{n}\rangle$ for short,
and similarly denote the firing of $t_1\dots t_n$ at $M$ yielding $M'$ by $M[t_1\dots t_{n}\rangle M'$.
$\Tt(N,M_0) := \{s\in  T^*|M_0[s\rangle\}$ is used to denote the set of transition sequences enabled at $M_0$.
Particularly we have $M_0[\epsilon\rangle M_0$.
A pair $(N,M_0)$ is called a {\it Petri net} or a {\it place/transition net (P/T net)},
where $N=(P,T,Pre,Post)$ is a net,
$M_0:P\to \N$ is called the {\it initial marking},
and the Petri net evolves initially at $M_0$ as transition sequences fire. Denote the set of {\it reachable
markings} of the Petri net by $\Rt(N,M_0):=\{M\in \N^P|\exists s\in T^*,M_0[s\rangle M\}$.

A {\it labeled P/T net} is a quadruple $(N,M_0,\Sigma,\ell)$,
where $N$ is a net, $M_0$ is an initial marking,
$\Sig$ is an {\it alphabet} (a finite set of labels),
and $\ell: T\to \Sig \cup\{\epsilon\}$ is a {\it labeling function} that assigns to each transition
$t\in T$ a symbol of $\Sig$ or the empty word $\epsilon$, which means when a transition $t$
fires, its label $\ell(t)$ can be observed if $\ell(t)\in \Sig$; and nothing can be observed if
$\ell(t)=\epsilon$.
A transition $t\in T$ is called {\it observable} if $\ell(t)\in\Sig$, and called
{\it unobservable} otherwise.
Particularly, a labeling function $\ell:T\to \Sig$ is called {\it $\epsilon$-free}, and a P/T net
with an $\epsilon$-free labeling function is called an {\it $\epsilon$-free labeled P/T net}.
A Petri net is actually an $\epsilon$-free labeled P/T net with an injective labeling function.
For a labeled P/T net $G=(N,M_0,\Sig,\ell)$, the {\it language
generated by $G$} is denoted by $\mathcal{L}(G):=\{\s\in \Sig^*|\exists s\in T^*,M_0[s\rangle,\ell(s)=\s\}$, 
i.e., the set of labels of finite transition sequences enabled at the initial marking $M_0$.
We also say for each $\s\in\LM(G)$, $G$ generates $\s$. For $\s\in\Sig^{\omega}$, we say $G$ generates
$\s$ if an infinite event sequence $t_1t_2\dots$$\in T^{\omega}$ is enabled at $M_0$ (denoted $M_0[t_1t_2\dots
\rangle$) and $\ell(t_1t_2\dots)=\s$.
The set of infinite label sequences generated by $G$
is denoted by $\LM^{\omega}(G)$  (which is an {\it$\omega$-language}).

Note that for a labeled P/T net $G=(N,M_0,\Sig,\ell)$, when we observe a label sequence
$\s\in \Sig^*$, there may exist infinitely many firing transition sequences labeled by $\s$.
However, for an $\epsilon$-free labeled P/T net, when we observe a label sequence $\s$, there exist at most finitely
many firing transition sequences labeled by $\s$.
Denote by $\Mt(G,\s):=\{M\in \N^P|\exists s\in T^*,M_0[s\rangle M,\ell(s)=\s\}$,
the set of markings in which $G$ can be when $\s$ is observed.
Then for each $\s\in \Sig^*$, $\Mt(G,\s)$ is finite
for an $\epsilon$-free labeled P/T net $G$.

\subsection{The language equivalence problem}

The undecidable result proved in this paper is obtained by using the following language equivalence problem.

\begin{proposition}\label{prop1_Det_PN}\cite[Theorem 8.2]{Hack1976PetriNetLanguage}
	It is undecidable to verify whether two $\epsilon$-free labeled P/T nets with the same alphabet generate
	the same language.
\end{proposition}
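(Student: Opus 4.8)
The statement is a classical undecidability result, so the plan is to obtain it by a reduction from Hilbert's tenth problem (H10): the undecidable problem of deciding, for a given polynomial $P(x_1,\dots,x_n)$ with integer coefficients, whether $P$ has a zero in $\N^n$. I would reduce H10 to the \emph{negation} of the language equivalence problem, so that equivalence turns out to be undecidable.

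The first ingredient is the weak polynomial computability of Petri nets. Writing $P=P_1-P_2$ with $P_1,P_2\in\N[x_1,\dots,x_n]$ having nonnegative coefficients, so that $P(\bar x)=0$ iff $P_1(\bar x)=P_2(\bar x)$, I would build two $\epsilon$-free labeled P/T nets $G_1,G_2$ over a common alphabet $\Sig$ that \emph{weakly compute} $P_1$ and $P_2$. These are assembled from standard gadget subnets for copying, addition, and multiplication: a transition sequence first guesses an input tuple $\bar x$ and then runs the arithmetic gadgets, after which a designated output place can carry any token count up to $P_i(\bar x)$, with $P_i(\bar x)$ itself attainable. The subtlety—and the heart of the matter—is that Petri nets cannot test a place for zero, so each gadget only computes its value \emph{from below} (it may under‑produce); the construction must be arranged so that the \emph{maximal} attainable output equals exactly $P_i(\bar x)$ and so that the input tuples guessed by the two nets are forced to coincide.

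The second step is to expose the computation in the observed label sequence so that language equivalence detects a solution. I would label transitions so that a common word records (i) the guessed tuple $\bar x$ (say in unary, one block per variable) and (ii) a run of ``output markers.'' Arranging things so that a word encoding $\bar x$ followed by $k$ markers lies in $\mathcal{L}(G_i)$ precisely when $k\le P_i(\bar x)$, one gets $\mathcal{L}(G_1)=\mathcal{L}(G_2)$ iff $P_1(\bar x)=P_2(\bar x)$ \emph{fails} for every $\bar x$, i.e.\ iff $P$ has no zero in $\N^n$. Since H10 is undecidable, language equivalence of $\epsilon$‑free labeled P/T nets is undecidable. (If one prefers to route through the inclusion problem, note that the class of such languages is effectively closed under union via a disjoint‑union construction with an initial nondeterministic choice folded into the first labeled transition; then $\mathcal{L}(G_1)\subseteq\mathcal{L}(G_2)$ reduces to the equality $\mathcal{L}(G_1)\cup\mathcal{L}(G_2)=\mathcal{L}(G_2)$, transferring undecidability from inclusion to equality.)

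The main obstacles are twofold. First is the weak‑computation lemma together with the label design that converts the one‑sided bound $k\le P_i(\bar x)$ into an \emph{exact} comparison of the two languages: one must guarantee that the two nets can disagree on a word only at a word witnessing $P_1(\bar x)=P_2(\bar x)$, which forces the gadgets to be synchronized on their shared guessed input and to reveal their maximal outputs through the labels. Second, because the theorem demands $\epsilon$‑\emph{freeness}, every internal gadget transition must carry a label in $\Sig$; I would assign these auxiliary ``filler'' symbols and design the input/output encoding to be insensitive to (or to synchronize identically on) the filler portions, so that $\epsilon$‑freeness is maintained without corrupting the comparison. This careful bookkeeping—rather than any single clever step—is where the real work lies.
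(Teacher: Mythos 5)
First, note that the paper does not actually prove this proposition: it is quoted from Hack's 1976 report (Theorem 8.2) and used as a black box, so what you are reconstructing is Hack's classical argument. Your overall architecture --- Hilbert's tenth problem, weak computation of polynomials with nonnegative coefficients by Petri net gadgets, and a label encoding in which a word $w(\bar x)m^k$ belongs to $\mathcal{L}(G_i)$ exactly when $k\le P_i(\bar x)$ --- is indeed the architecture of that proof, and your concerns about $\epsilon$-freeness and closure under union are on target.

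However, the central biconditional of your main route is a genuine gap, not a fixable slip of direction. From the membership condition ``$w(\bar x)m^k\in\mathcal{L}(G_i)$ iff $k\le P_i(\bar x)$'' one gets $\mathcal{L}(G_1)=\mathcal{L}(G_2)$ iff $P_1(\bar x)=P_2(\bar x)$ for \emph{every} $\bar x\in\N^n$ --- the opposite of what you assert --- and that target problem is decidable, since a polynomial vanishing on all of $\N^n$ must be the zero polynomial. So the direct equality reduction, as stated, reduces a decidable problem and proves nothing. The missing idea is the quantifier alignment via an inequality: $P$ has a zero iff it is \emph{not} the case that $P(\bar x)^2\ge 1$ for all $\bar x$, i.e., writing $P^2=A-B$ with $A,B$ having nonnegative coefficients, iff $\neg\forall\bar x\,[B(\bar x)+1\le A(\bar x)]$. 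A universal inequality between polynomials is exactly what the downward-closed weak-computation languages can express as an \emph{inclusion} $\mathcal{L}(G_B)\subset\mathcal{L}(G_A)$, which makes inclusion undecidable; equality then inherits undecidability by the union trick you mention only parenthetically ($\mathcal{L}_1\subset\mathcal{L}_2$ iff $\mathcal{L}_1\cup\mathcal{L}_2=\mathcal{L}_2$). In short, your parenthetical remark is the proof and your main route is not: without the squaring/$+1$ step and the passage through inclusion, the reduction collapses.
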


\subsection{Dickson's lemma}

Let $P$ be a finite set. For every two elements $x$ and $y$ of $\N^{P}$, we say $x\le y$
if and only if $x(p)\le y(p)$ for all $p$ in $P$. We write $x<y$ if $x\le y$ and $x\ne y$.
For a subset $S$ of $\N^{P}$, an element $x\in S$
is called {\it minimal} if for all $y$ in $S$, $y\le x$ implies $y=x$. Dickson's lemma
\cite{Dickson1913DicksonLemma}
shows that for each subset $S$ of $\N^{P}$, there exist at most finitely many distinct minimal elements.
This lemma follows from the fact that every infinite sequence with all elements
in $\N^{P}$ has an increasing infinite
subsequence, where such an increasing subsequence can be chosen component-wise
\cite[Theorem 2.5]{Reutenauer1990MathematicsPetriNets}.
We will use Dickson's lemma to prove some decidable results for labeled P/T nets.

\subsection{The coverability problem}

We also need the following Proposition \ref{prop4_Det_PN} on the coverability problem
to obtain some main results on complexity.

\begin{proposition}\label{prop4_Det_PN}\cite{Rackoff19782EXPSPACECoverabilityPetriNets,Lipton1976ReachabilityPetriNetsEXPSPACE-hard} 
	It is EXPSPACE-complete to decide for a Petri net $G=(N,M_0)$ and a {\it destination marking}
	$M\in\N^{P}$ whether $G$ covers $M$, i.e., whether
	there exists a marking $M'\in\Rt(N,M_0)$ such that $M\le M'$.
\end{proposition}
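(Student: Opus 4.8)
The plan is to establish the two directions separately: membership in EXPSPACE (the algorithmic upper bound, due to Rackoff) and EXPSPACE-hardness (the lower bound, due to Lipton). Throughout, let $n$ denote the size of the input, i.e., the number of places and transitions together with the number of bits needed to write $M_0$ and the destination marking $M$.

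For membership, the plan is to bound the length of a shortest \emph{covering sequence} and then guess it nondeterministically. First I would show that if $M$ is coverable, then there is a firing sequence $s$ with $M_0[s\rangle M'$ and $M\le M'$ whose length is at most doubly exponential in $n$. Rackoff's technique proves this by induction on the number of places: one defines, for each $i$, a bound $f(i)$ on the length of a shortest covering path in which only the first $i$ coordinates are required to stay bounded, and shows that $f(i)$ obeys a recurrence whose solution is of order $2^{2^{O(n\log n)}}$; the key combinatorial step is that whenever some coordinate would have to grow beyond a threshold, the path can be pumped or short-cut, since coverability is monotone and larger markings only enable more behavior. Given such a bound, a nondeterministic procedure guesses the transitions of $s$ one at a time, keeping only the current marking (each coordinate capped so that it needs at most $2^{O(n\log n)}$ bits) together with a step counter of the same magnitude, and accepts as soon as the running marking dominates $M$. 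This uses exponential space, and since $\textrm{NEXPSPACE}=\textrm{EXPSPACE}$ by Savitch's theorem, coverability lies in EXPSPACE.

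For hardness, the plan is to reduce acceptance of an arbitrary $2^{p(n)}$-space-bounded Turing machine to coverability. First I would encode a configuration of such a machine by a few counters bounded by $2^{2^{O(n)}}$ (the tape of length $2^{p(n)}$ over a fixed alphabet is read as a number of that magnitude), so that acceptance becomes the halting of a counter machine using increments, decrements, and zero-tests on doubly-exponentially bounded counters. The core of Lipton's argument is then to simulate such a bounded counter machine by a Petri net with only $O(n)$ places and polynomially many transitions. The delicate point is that Petri nets cannot test a place for zero; Lipton circumvents this by a recursive doubling construction of counter gadgets $B_k$ that can perform exactly $2^{2^{k}}$ controlled operations, maintaining alongside each counter a complementary counter whose sum is invariant, so that an attempted premature zero-test cannot be completed without eventually deadlocking. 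Assembling these gadgets, the machine accepts if and only if a distinguished accept place becomes marked, i.e., if and only if the corresponding target marking is coverable; since the whole net has size polynomial in $n$, this is a polynomial-time reduction, yielding EXPSPACE-hardness.

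The main obstacle is the hardness direction, specifically the correctness of Lipton's recursive zero-test gadget: one must prove that the net faithfully simulates the bounded counter machine and, crucially, that it cannot cheat by firing an operation while the tested counter is nonzero. This requires carefully setting up the complementary-counter invariants and a nontrivial induction over the levels $B_k$ of the construction. On the membership side, the comparatively routine but still technical obstacle is establishing the doubly-exponential path-length bound, i.e., verifying that Rackoff's dimension induction and the accompanying pumping argument go through and produce markings writable in exponential space.
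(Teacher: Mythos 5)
This proposition is not proved in the paper at all: it is imported as a known result with citations to Rackoff and Lipton, accompanied only by a remark on the precise space bounds ($2^{cn}$ for the lower bound in the number of transitions, $2^{cm\log m}$ for the upper bound in the size of the transition set). So there is no in-paper argument to compare against; what you have written is an outline of the standard proofs from the cited literature, and as such it is accurate. Your upper-bound sketch is Rackoff's argument: induction on the number of places, a recurrence for the length of a shortest $i$-bounded covering sequence yielding a doubly exponential bound, and then a nondeterministic guess-and-check with coordinates truncated at a doubly exponential threshold, closed off by Savitch's theorem. Your lower-bound sketch is Lipton's: reduce a $2^{p(n)}$-space machine to a counter machine with doubly-exponentially bounded counters, and simulate zero-tests with complementary counters and the recursive $B_k$ doubling gadgets. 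Two small imprecisions worth noting: in Rackoff's induction the first $i$ coordinates are required to stay \emph{non-negative} (legal), not merely bounded, and a cheating run of Lipton's net need not deadlock --- it suffices that it can never mark the accept place, which is what makes the reduction work for coverability. Be aware that what you have is a proof \emph{plan}: the two steps you yourself flag as the main obstacles (the pumping argument behind Rackoff's length bound, and the invariant-based induction establishing that the zero-test gadget cannot be fooled) are exactly where all the work lies, and the paper sidesteps them entirely by citing the originals. For the purposes of this paper, the citation is the appropriate treatment, and your outline is consistent with it.
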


	In \cite{Lipton1976ReachabilityPetriNetsEXPSPACE-hard}, it is proved that deciding coverability
	for Petri nets requires
	at least $2^{cn}$ space infinitely often for some constant $c>0$, where $n$ is the number of transitions.
	In \cite{Rackoff19782EXPSPACECoverabilityPetriNets}, it is shown that deciding this property
	for a Petri net requires at most space $2^{cm\log m}$ for some constant $c$,
	where $m$ is the {\it size} of the set of all transitions.
	For a Petri net $((P,T,Pre,Post),M_0)$, each transition $t\in T$ corresponds to a $|P|$-length vector
	$Post(\cdot,t)-Pre(\cdot,t)=:c(t)$ whose components are integers. The size of $t$ is the sum of
	the lengths of the binary representations of the components of $c(t)$ (where the length of $0$ is $1$).
	The size of $T$ is the sum of the sizes of all transitions of $T$, and is set to be the above $m$.

The coverability problem belongs to EXPSPACE \cite{Rackoff19782EXPSPACECoverabilityPetriNets}.
Proposition \ref{prop4_Det_PN} has been used to prove the EXPSPACE-hardness of checking diagnosability
\cite{Yin2017DiagnosabilityLabeledPetriNets} and prognosability \cite{Yin2017PrognosabilityLabeledPetriNets}
of labeled Petri nets.

\subsection{Infinite graphs}

Let $(V,E)$ be a {\it directed graph}, where $V$ is the {\it vertex} set, and $E\subset V\times V$ the {\it edge} set.
For each edge $(v,v')\in E$, also denoted by $v\to v'$, $v$ and $v'$ are called the {\it tail} and the
{\it head} of the edge, respectively, $v$ is called a {\it parent} of $v'$ and $v'$
is called a {\it child} of $v$. A directed graph is called {\it infinite} if it has infinitely many vertices. 
A {\it path} is a sequence of vertices connected by edges with the same direction,
i.e., a path is of one of the forms: (1) $\cdots \to v_{-1}\to v_0\to v_1\to\cdots$ (bi-infinite),
(2) $v_0\to v_1\to\cdots$ (infinite),
(3) $\cdots \to v_{-1}\to v_0$ (anti-infinite), or (4) $v_1\to \cdots \to v_n$ (finite).
For each finite path $v_1\to\cdots\to v_n$, $v_1$ is called an {\it ancestor} of $v_n$, and $v_n$
is called a {\it descendant} of $v_1$.
A directed graph $(V,E)$ is called a {\it tree} if there is a vertex $v_0$
without any parent (called {\it root}), any other vertex is a descendant of $v_0$ and 
the head of exactly one edge. A tree is called {\it locally finite} if each vertex has at most
finitely many children.


\subsection{Yen's path formulae for Petri nets}

The final tool that we will use to prove some decidable results is Yen's path formula
\cite{Yen1992YenPathLogicPetriNet,Atig2009YenPathLogicPetriNet} for Petri nets.
In \cite{Yen1992YenPathLogicPetriNet}, a concept of Yen's path formulae is proposed and
some upper bounds for verifying the satisfiability of the formulae are studied.
In addition, it is shown that many problems, e.g., the boundedness problem,
the coverability problem for Petri nets,
can be reduced to the satisfiability problem of some Yen's path formulae.
In \cite{Atig2009YenPathLogicPetriNet}, a special class of Yen's path formulae called {\it increasing}
Yen's path formulae is proposed. The main results of \cite{Atig2009YenPathLogicPetriNet} are stated as follows.
\begin{proposition}[\cite{Atig2009YenPathLogicPetriNet}]\label{prop6_Det_PN}
	The reachability problem for Petri nets can be
	reduced to the satisfiability problem of some Yen's path formula,
	and the satisfiability problem of each Yen's path formula can be reduced to the reachability problem
	for Petri nets with respect to the marking with all places empty, all in polynomial time.
	In addition, the satisfiability of each increasing Yen's path formula can be verified in EXPSPACE.
\end{proposition}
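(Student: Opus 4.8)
The statement bundles three assertions of increasing difficulty, so the plan is to treat them in turn. The first — that reachability reduces to the satisfiability of \emph{some} Yen's path formula — is the routine direction. Given a Petri net $G=(N,M_0)$ and a destination marking $M$, I would write the single-phase path formula asserting the existence of one firing sequence $\sigma$ and one intermediate marking $M_1$ with $M_0[\sigma\rangle M_1$, conjoined with the predicate $M_1=M$. The equality is expressed as the $2|P|$ linear constraints $M_1(p)\le M(p)$ and $M_1(p)\ge M(p)$ ranging over $p\in P$. This formula has size polynomial in the encodings of $G$ and $M$, and it is satisfiable exactly when $M\in\Rt(N,M_0)$, which gives the claim.

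For the converse reduction — from the satisfiability of an arbitrary Yen's path formula to reachability of the all-empty marking — I would construct an auxiliary net that simulates the guessed sequence of firing phases while recording the quantities the predicate refers to. The construction would add a small controller subnet of ``phase'' places that forces the sequences $\sigma_1,\dots,\sigma_n$ to fire in order; attach a counter place to each transition within each phase so that the Parikh images are readable as token counts; snapshot each intermediate marking $M_i$ into dedicated places; and, for each linear (in)equality of the predicate, install a draining gadget whose transitions can remove the relevant counter and snapshot tokens precisely when that constraint holds. Because a Petri net cannot locally test a place for zero, the crucial point is that the global requirement of reaching the all-zero marking performs every needed zero-test simultaneously at the end: the auxiliary net empties iff every gadget succeeded in consuming all its tokens, i.e.\ iff every constraint is satisfied. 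I would then check that the whole construction stays polynomial in the size of the formula and the net.

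For the EXPSPACE upper bound on the satisfiability of an \emph{increasing} Yen's path formula, I would run a Rackoff-style bounding argument. The increasing restriction supplies the monotonicity needed: enlarging an intermediate marking preserves satisfaction of the predicate, so one may always pass to componentwise-minimal witnesses. Generalizing Rackoff's analysis for coverability, I would show that a satisfiable increasing formula admits a witness whose intermediate markings and phase lengths are bounded by a doubly-exponential function of the input size. A nondeterministic procedure can then guess such a witness, store counters of exponentially many bits, and verify the predicate on the fly within exponential space; Savitch's theorem collapses the nondeterminism and yields the stated EXPSPACE bound.

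The main obstacle is the second assertion: faithfully encoding a boolean combination of linear constraints — in particular equalities and strict inequalities — into a net \emph{without} any local zero-testing, so that the only zero-test is the terminal ``reach the empty marking'' condition, all while keeping the construction polynomially sized and correctly serialized by the phase controller. The bounding lemma behind the third assertion is technically the heaviest piece, but it follows a well-understood Rackoff template once the monotonicity granted by the increasing condition is exploited; the genuinely delicate engineering lies in the gadget-based reduction of the second part.
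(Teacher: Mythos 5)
This proposition is not proved in the paper at all: it is imported verbatim from \cite{Atig2009YenPathLogicPetriNet}, so there is no in-paper argument to compare yours against. Judged on its own terms, your sketch correctly identifies the three ingredients of the cited proof — a direct encoding of reachability into the logic, a gadget-based simulation of a formula by a net whose terminal all-zero reachability test performs every zero-test at once, and a Rackoff-style bounding argument exploiting monotonicity for the increasing fragment — so the overall strategy is the right one.

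There is, however, one concrete flaw and one acknowledged but unresolved gap. First, your encoding of $M_1=M$ as the conjunction of $M_1(p)\le M(p)$ and $M_1(p)\ge M(p)$ is not well-formed in the logic: the Type~1 marking predicates only provide $M(p)\ge c$ and $M(p)>c$, negation is not a connective, and $M_1$ by itself is not a term, so the ``$\le$'' half has no atomic predicate to stand on. Equality must instead be routed through the Type~2 predicates on terms (differences of marking variables plus constants), or through a preliminary net transformation; as written, your first reduction does not produce a formula of the defined syntax. Second, the part you yourself flag as the ``main obstacle'' — serializing the phases, maintaining shadow/snapshot copies of intermediate markings (which cannot be copied after the fact and must be accumulated in parallel from the start), handling disjunctions by nondeterministic branching, and draining possibly-negative term values split into positive and negative parts — is exactly where the substance of the cited proof lies, and your proposal names these issues without discharging them. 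The same is true of the doubly-exponential bounding lemma for the increasing case. So the proposal is a faithful roadmap of the known proof rather than a proof; for the purposes of this paper the correct move is simply to cite \cite{Atig2009YenPathLogicPetriNet}, as the authors do.
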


For a Petri net $(N,M_0)$, where $N=(P,T,Pre,Post)$ is a net, each Yen's path formula consists of the following
elements:

\begin{enumerate}
	\item {\it Variables}. There are two types of variables, namely, {\it marking variables}
		$M_1,M_2,\dots$ and {\it variables for transition sequences} $s_1,s_2,\dots$,
		where each $M_i$ denotes an indeterminate function in $\Z^{P}$ and
		each $s_i$ denotes an indeterminate finite sequence of
		transitions, $\Z$ is the set of integers.
	\item {\it Terms}. Terms are defined recursively as follows.
		\begin{enumerate}
			\item $\forall$ constant $c\in \N^{P}$, $c$ is a term.
			\item $\forall j>i$, $M_j-M_i$ is a term, where $M_i$ and $M_j$ are marking variables.
			\item $T_1+T_2$ and $T_1-T_2$ are terms if $T_1$ and $T_2$ are terms.
		\end{enumerate}
	\item {\it Atomic Predicates}. There are two types of atomic predicates, namely {\it transition
		predicates} and {\it marking predicates}.
		\begin{enumerate}
			\item Transition predicates.
				\begin{itemize}
					\item $y\odot\sharp(s_i)<c$, $y\odot\sharp(s_i)=c$, and $y\odot\sharp(s_i)>c$
						are predicates, where $i>1$, constant $y$ $\in \Z^{T}$, constant
						$c\in\N$, and $\odot$ denotes the inner product (i.e., $(a_1,\dots,a_{|T|})
						\odot(b_1,\dots,b_{|T|})=\sum_{i=1}^{|T|}a_kb_k$).
					\item $\sharp(s_1)(t)\le c$ and $\sharp(s_1)(t)\ge c$ are predicates, where
						constant $c\in \N$, $t\in T$.
				\end{itemize}
			\item Marking predicates.
				\begin{itemize}
					\item Type 1. $M(p)\ge c$ and $M(p)>c$ are predicates, where $M$ is a marking variable
						and $c\in \Z$ is constant.
					\item Type 2. $T_1(i)=T_2(j)$, $T_1(i)<T_2(j)$, and $T_1(i)>T_2(j)$ are predicates,
						where $T_1,T_2$ are terms and $i,j\in T$.
				\end{itemize}
		\end{enumerate}
	\item $F_1\vee F_2$ and $F_1\wedge F_2$ are predicates if $F_1$ and $F_2$ are predicates.
\end{enumerate}

A Yen's path formula $f$ is of the following form (with respect to Petri net $(N,M_0)$, where $N=(P,T,Pre,Post)$):
\begin{equation}\label{eqn12_Det_PN}
	\begin{split}
		&(\exists M_1,\dots,M_n\in\N^{P})(\exists s_1,\dots,s_n\in T^{*})
		[ (M_0[s_1\rangle M_1[s_2\rangle\cdots[s_n\rangle M_n)\\
		&\quad\wedge F(M_1,\dots,M_n,s_1,\dots,s_n)],
	\end{split}
\end{equation}
where $F(M_1,\dots,M_n,s_1,\dots,s_n)$ is a predicate.

Given a Petri net $G$ and a Yen's path formula $f$, we use $G\models f$ to denote that $f$ is true
in $G$. The satisfiability problem is the problem of determining, given a Petri net $G$ and a Yen's path
formula $f$, whether $G\models f$.

A Yen's path formula \eqref{eqn12_Det_PN} is called {\it increasing} if $F$ does not contain
transition predicates and implies $M_n\ge M_1$. When $n=1$, it naturally holds $M_n\ge M_1$,
then in this case an increasing Yen's path formula is
$(\exists M_1)(\exists s_1)[ (M_0[s_1\rangle M_1)\wedge F(M_1)]$.

The unboundedness problem can be formulated as the satisfiability of the increasing Yen's
path formula
$(\exists M_1,M_2)(\exists s_1,s_2)[ (M_0[s_1\rangle M_1[s_2\rangle M_2)\wedge(M_2>M_1)]$.

The coverability problem can be formulated as 
the satisfiability of the increasing Yen's
path formula
$(\exists M_1)(\exists s_1)[ (M_0[s_1\rangle M_1)\wedge(M_1\ge M)]$,
where $M$ is the destination marking.

\section{Weak approximate detectability}\label{sec:WeakDetect}


The concept of weak detectability is formulated as follows.

\begin{definition}[WD]\label{def1_Det_PN}
	Consider an LSTS ${\mathcal S}=(X,T,X_0,\to,\Sig,\ell)$.
	System $\cal S$ is called {\it weakly detectable} if there exists a label sequence
	$\s\in \LM^{\omega}(\cal S)$ such that for some positive integer $k$, $|\Mt({\cal S},\s')|=1$ 
	for every prefix $\s'$ of $\s$ satisfying $|\s'|\ge k$.
\end{definition}

Sometimes, we do not need to determine the current state of an LSTS,
but only need to know whether the current state belongs
to some prescribed subset of reachable states.
Then the concept of weak approximate detectability is formulated as below.

\begin{definition}[WAD]\label{def2_Det_PN}
	Consider an LSTS ${\mathcal S}=(X,T,X_0,\to,\Sig,\ell)$.
	Given a positive integer $n>1$ and 
	a partition $\{R_1,\dots,R_n\}$ of the set of its reachable states,
	$\cal S$ is called {\it weakly approximately detectable} with respect to partition $\{R_1,\dots,R_n\}$
	if there exists a label sequence
	$\s\in \LM^{\omega}(\cal S)$
	such that for some positive integer $k$, for every prefix $\s'$
	of $\s$ satisfying $|\s'|\ge k$,  $\emptyset\ne\Mt({\cal S},\s')\subset R_{i_{\s'}}$ for some
	$i_{\s'}\in [1,n]$.
\end{definition}

\subsection{Labeled Petri nets}

One directly sees that if an LSTS is weakly detectable, then it is weakly approximately
detectable with respect to every finite partition of its state space. However,
if it is weakly approximately detectable with respect to some finite
partition of its state space, then it is not necessarily weakly detectable.
See the following example.

\begin{example}
	Consider a labeled Petri net $G$  in Fig. \ref{fig10_Det_PN}. We have
	$\LM^{\omega}(G)=\{a^{\omega},b^{\omega}\}$. We also have for all $k\in\Z_{+}$,
	$\Mt(G,a^{k})=\{(0,1,0,0,0),(1,0,0,0,0)\}$,
	$\Mt(G,b^{k})=\{(0,0,0,1,0),(0,0,0,0,1)\}$,
	where the components of a marking is in the order $(p_{-2},p_{-1},p_0,
	p_1,p_2)$.
	These observations show that the net is not weakly detectable.
	It is weakly approximately detectable with respect to the partition:
	\begin{equation}\label{eqn19_Det_PN}
		\begin{split}
			&R_1=\{(0,0,1,0,0)\},\\
			&R_2=\{(0,0,0,1,0),(0,0,0,0,1)\},\\
			&R_3=\{(0,1,0,0,0),(1,0,0,0,0)\}
		\end{split}
	\end{equation}
	of the set of its reachable markings. Also, this net is actually a deterministic finite automaton
	if we regard labels $a$ and $b$ as labels of events,
	and $(0,0,1,0,0)$ as the unique initial state.
	Similarly we have the automaton is also weakly approximately detectable with respect to partition 
	\eqref{eqn19_Det_PN} but not weakly detectable.
	
		\begin{figure}[htbp]
		\tikzset{global scale/.style={
    scale=#1,
    every node/.append style={scale=#1}}}
		\begin{center}
			\begin{tikzpicture}[global scale = 1.0,
				>=stealth',shorten >=1pt,thick,auto,node distance=1.5 cm, scale = 0.8, transform shape,
	->,>=stealth,inner sep=2pt,
				every transition/.style={draw=red,fill=red,minimum width=1mm,minimum height=3.5mm},
				every place/.style={draw=blue,fill=blue!20,minimum size=7mm}]
				\tikzstyle{emptynode}=[inner sep=0,outer sep=0]
				\node[place, label=above:$p_0$, tokens=1] (p1) {};
				\node[transition, label=above:$b$,right of = p1] (t3) {}
				edge[pre] (p1);
				\node[transition, label=above:$b$,above of = t3] (t2) {}
				edge[pre] (p1);
				\node[place, label=above:$p_{1}$,  right of = t2] (p2) {}
				edge[pre] (t2);
				\node[place, label=above:$p_{2}$,  right of = t3] (p3) {}
				edge[pre] (t3);
				\node[transition, label=above:$b$,right of = p2] (t4) {}
				edge[pre, bend left] (p2)
				edge[post, bend right] (p2);
				\node[transition, label=above:$b$,right of = p3] (t5) {}
				edge[pre, bend left] (p3)
				edge[post, bend right] (p3);
				\node[transition, label=above:$a$,left of = p1] (t3'){}
				edge[pre] (p1);
				\node[transition, label=above:$a$,above of = t3'] (t2') {}
				edge[pre] (p1);
				\node[place, label=above:$p_{-1}$,  left of = t2'] (p2') {}
				edge[pre] (t2');
				\node[place, label=above:$p_{-2}$,  left of = t3'] (p3') {}
				edge[pre] (t3');
				\node[transition, label=above:$a$,left of = p2'] (t4') {}
				edge[pre, bend left] (p2')
				edge[post, bend right] (p2');
				\node[transition, label=above:$a$,left of = p3'] (t5') {}
				edge[pre, bend left] (p3')
				edge[post, bend right] (p3');
			\end{tikzpicture}
			\caption{A labeled P/T net $G$, where letters beside transitions
			denote their labels, each arc is with weight $1$.}
			\label{fig10_Det_PN}
		\end{center}
	\end{figure}

\end{example}

For the weak approximate detectability of labeled P/T nets, the following result holds.

\begin{theorem}\label{thm2_Det_PN}
	Let $n>1$ be a positive integer. 
	It is undecidable to verify for an $\epsilon$-free labeled P/T net
	and a partition $\{R_1,\dots,R_n\}$ of the set of its reachable markings, whether the labeled P/T net is 
	weakly approximately detectable with respect to $\{R_1,\dots,R_n\}$.
\end{theorem}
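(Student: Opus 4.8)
The plan is to reduce the language equivalence problem of Proposition~\ref{prop1_Det_PN} to the \emph{complement} of weak approximate detectability: from two $\ep$-free labeled P/T nets $G_1=(N_1,M_0^1,\Sig,\ell_1)$ and $G_2=(N_2,M_0^2,\Sig,\ell_2)$ over a common alphabet $\Sig$ I would build a single $\ep$-free labeled P/T net $G$ together with a partition $\{R_1,\dots,R_n\}$ of its reachable markings so that $G$ is weakly approximately detectable with respect to $\{R_1,\dots,R_n\}$ if and only if $\LM(G_1)\ne\LM(G_2)$; since the latter is undecidable, so is the former. Concretely, take disjoint copies of $N_1$ and $N_2$ (disjoint place sets $P_1,P_2$), add a fresh initial place $q_0$ holding one token, and add transitions $a_1,a_2$, both labeled by a fresh symbol $\s_0\notin\Sig$, that consume the token in $q_0$ and deposit $M_0^1$ on $P_1$ (resp. $M_0^2$ on $P_2$) together with a token on a fresh control place $c_1$ (resp. $c_2$). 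Thus after the first observed symbol $\s_0$ the net commits nondeterministically to simulating either $G_1$ or $G_2$, and the control token in $c_i$ (never consumed, since $G_i$'s transitions touch only $P_i$) records which copy is live. The partition is the finitely describable one induced by the unique marked control place: $R_1$ collects the initial marking together with all $G_1$-side markings, and $R_2$ collects all $G_2$-side markings.

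To guarantee that every relevant observation extends to an infinite one --- otherwise a finite witness could not be promoted to an element of $\LM^{\omega}(G)$ --- I add a fresh ``idle'' symbol $e$ and self-loop transitions $\mathrm{idle}_1,\mathrm{idle}_2$ (labeled $e$) on $c_1,c_2$; these are permanently enabled once the corresponding copy is live, do not move tokens in $P_1,P_2$, and make $G$ deadlock-free. The crucial design choice, and the one needing care, is that $\mathrm{idle}_1$ and $\mathrm{idle}_2$ must carry the \emph{same} label $e$, which keeps the two copies indistinguishable under padding and yields the central computation: for every $u\in(\Sig\cup\{e\})^{*}$, writing $\pi_{\Sig}(u)$ for the projection erasing $e$, the estimate $\Mt(G,\s_0 u)$ meets $R_i$ exactly when $\pi_{\Sig}(u)\in\LM(G_i)$. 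Indeed, since $G_i$ is $\ep$-free its language is prefix-closed and an $e$ may be inserted at any position, so $\s_0 u$ is producible on the $G_i$-side iff $\pi_{\Sig}(u)\in\LM(G_i)$. Hence $\Mt(G,\s_0 u)$ lies in a single cell iff $\pi_{\Sig}(u)$ belongs to exactly one of $\LM(G_1),\LM(G_2)$.

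With this equivalence the two directions are short. If $\LM(G_1)=\LM(G_2)$, then for every infinite $\s_0\beta\in\LM^{\omega}(G)$ and every prefix $\s_0 u$ the word $\pi_{\Sig}(u)$ lies in $\LM(G_1)=\LM(G_2)$, so $\Mt(G,\s_0 u)$ meets both $R_1$ and $R_2$ and is never confined to one cell; thus $G$ is not weakly approximately detectable. If $\LM(G_1)\ne\LM(G_2)$, pick (by the symmetry of the two whole cells $R_1,R_2$) a word $w^{\ast}\in\LM(G_1)\setminus\LM(G_2)$, necessarily $w^{\ast}\ne\ep$; the observation $\s_0 w^{\ast}e^{\omega}$ lies in $\LM^{\omega}(G)$, and every prefix of length at least $|w^{\ast}|+1$ has $\Sig$-projection $w^{\ast}\notin\LM(G_2)$, so its estimate is contained in $R_1$. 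Hence $G$ is weakly approximately detectable with $k=|w^{\ast}|+1$.

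Finally, to realize an arbitrary $n>1$ I would attach $n-2$ ``dummy'' branches from $q_0$: transitions $b_3,\dots,b_n$, each labeled $\s_0$, leading to fresh places $d_3,\dots,d_n$ each carrying an $e$-labeled self-loop, and let $R_j$ be the singleton cell consisting of the reachable marking with a token in $d_j$. The delicate requirement is that these extra cells must not themselves furnish a spurious detectability witness, and this is precisely why the dummy loops reuse the label $e$: any observation reaching $d_j$ has the form $\s_0 e^{*}$, whose $\Sig$-projection is $\ep\in\LM(G_1)\cap\LM(G_2)$, so $\Mt(G,\s_0 e^{k})$ always simultaneously contains the $G_1$- and $G_2$-initial markings and can never be confined to $R_j$. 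Thus the dummy cells leave the equivalence ``weakly approximately detectable $\Leftrightarrow\LM(G_1)\ne\LM(G_2)$'' intact for every $n$, and undecidability of language equivalence transfers to the stated problem. The main obstacle throughout is the tension between enforcing deadlock-freeness (needed to build the infinite observations) and not perturbing the language comparison; a single shared padding label $e$ resolves it.
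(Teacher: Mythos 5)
Your proposal is correct, and at its core it is the same reduction as the paper's: from two $\epsilon$-free nets $G_1,G_2$ you build a single net that, after one observable step labeled by a fresh symbol, nondeterministically commits to simulating $G_1$ or $G_2$, and you choose a partition separating the two sides so that weak approximate detectability holds iff $\LM(G_1)\ne\LM(G_2)$. The difference lies in the gadget used to manufacture the required infinite observations. The paper adds ``termination'' transitions $t_1^1,t_1^2$ that freeze the simulation and send a token into a cycle $p_2\to\cdots\to p_l\to p_2$ of identically labeled transitions, with the two sides entering the cycle at offset positions; the witness observation is $\s_G\s(\s_G)^{\omega}$ and the estimate rotates through the cells $R_{k\bmod 2+1}$, and general $n$ is handled by lengthening the cycle. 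You instead put an $e$-labeled idle self-loop on each control place, so padding is available concurrently with the simulation, the marking stays put, the witness $\s_0 w^{\ast}e^{\omega}$ is confined to one \emph{fixed} cell, and general $n$ is handled by singleton dummy branches whose reuse of the label $e$ you correctly argue cannot create spurious witnesses. Your key computation, that $\Mt(G,\s_0 u)$ meets $R_i$ iff the $\Sig$-projection of $u$ lies in $\LM(G_i)$, is sound (it uses $\epsilon$-freeness to pin down exactly which $\Sig$-transitions have fired), and both directions of the equivalence go through; your variant is if anything slightly cleaner, since it avoids the paper's rotating-cell bookkeeping and the case split on $n\in\{2,3\}$ versus $n>3$.
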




\begin{proof}
	We prove this result by reducing the language equivalence problem of labeled Petri nets
	(Proposition \ref{prop1_Det_PN}) to
	the problem under consideration. The proof is divided into three cases:
	$n=2$, $n=3$, and $n>3$.

	Let $l\ge 3$ be an integer.
	Arbitrarily given two $\epsilon$-free labeled P/T nets $G_i=(N_i,M_0^i,\Sig,\ell_i)$,
	where $N_i=(P_i,T_i,Pre_i,Post_i)$,
	$i=1,2$, $P_1\cap P_2=\emptyset$, $T_1\cap T_2=\emptyset$, we next construct a new $\epsilon$-free labeled P/T net
	$G=(N_G,M_0^G,\Sig\cup\{\s_G\},\ell_G)$ from $G_1$ and $G_2$.
	$G$ is specified as follows: (1) Add $l+2$ places $p_0,p_1^1,p_1^2,p_2,\dots,p_l$ to $G_1$ and $G_2$, where
	initially $p_0$ has one token, and all the other places have no token. (2) Add $l+3$ transitions $t_0^1,t_0^2,
	t_1^1,t_1^2,t_2,\dots,t_l$, and arcs $p_0\to t_0^1\to p_1^1\to t_1^1\to p_2\to t_2\to \cdots \to p_l\to t_l\to p_2$,
	and $p_0\to t_0^2\to p_1^2\to t_1^2\to p_l$, where these transitions are labeled by $\s_G\notin \Sig$.
	(3) For each transition $t\in T_i$, add arcs $p_1^i\to t\to p_1^i$, $i=1,2$.
	(4) All these new added arcs are with weight $1$. See Fig. \ref{fig2_Det_PN} as a sketch.

	\begin{figure}[htbp]
		\tikzset{global scale/.style={
    scale=#1,
    every node/.append style={scale=#1}}}
		\begin{center}
			\begin{tikzpicture}[global scale = 1.0,
				>=stealth',shorten >=1pt,thick,auto,node distance=1.5 cm, scale = 0.8, transform shape,
	->,>=stealth,inner sep=2pt,
				every transition/.style={draw=red,fill=red,minimum width=1mm,minimum height=3.5mm},
				every place/.style={draw=blue,fill=blue!20,minimum size=7mm}]
				\node[place,label=left:$p_0$,tokens=1] (p0) {};
				\node[transition,label=above:$t_0^1$, above right of = p0] (t01) {}
				edge[pre] (p0);
				\node[transition,label=below:$t_0^2$, below right of = p0] (t02) {}
				edge[pre] (p0);
				\node[place,label=above:$\bar p$,tokens=2,right of = t01] (barp) {};
				\node[transition,label=above:$\bar t$,right of = barp] (bart) {};
				\node[place,label=above:$p_2$, right of = bart] (p2) {}
				;
				\node[place,label=above:$p_1^1$,above of = barp] (p11) {}
				edge[pre] (bart)
				edge[post] (bart)
				edge[pre] (t01);
				\node[transition,label=above:$t_1^1$,above of = bart] (t11) {}
				edge[pre] (p11)
				edge[post] (p2);
				\node[place,label=below:$\hat p$,right of = t02,tokens=3] (hatp) {};
				\node[transition,label=below:$\hat t$,right of = hatp] (hatt) {};
				\node[place,label=below:$p_l$,right of = hatt] (pl) {}
				;
				\node[place,label=below:$p_1^2$,below of = hatp] (p12) {}
				edge[pre] (t02)
				edge[pre] (hatt)
				edge[post] (hatt);
				\node[transition,label=below:$t_1^2$, below of = hatt] (t12) {}
				edge[pre] (p12)
				edge[post] (pl)
				;
				\draw[dashed] (1.9,-1.75) rectangle (4.5,-0.3);
				\node at (3.20,0.75) {$G_1$};
				\draw[dashed] (1.9,1.8) rectangle (4.5,0.4);
				\node at (3.20,-0.75) {$G_2$};
				\node[transition,label=left:$t_l$,below left of = p2] (tl) {}
				edge[pre] (pl)
				edge[post] (p2);
				\node[transition,label=above:$t_2$,right of = p2] (t2) {}
				edge[pre] (p2);
				\node[place,label=right:$p_3$,below right of = t2] (p3) {}
				edge[pre] (t2);
				\node[below left of = p3] (dot) {\dots}
				edge[pre] (p3)
				edge[post] (pl);
			\end{tikzpicture}
			\caption{Sketch for the reduction in the proof of Theorem \ref{thm2_Det_PN},
			where all transitions outside $G_1\cup G_2$ are with the same label.}
			\label{fig2_Det_PN}
		\end{center}
	\end{figure}

	For net $G$, initially only transition $t_0^1$ or $t_0^2$ can fire. After $t_0^1$ ($t_0^2$) fires,
	the unique token in place $p_0$ moves to place $p_1^1$ ($p_1^2$), initializing net $G_1$ ($G_2$).
	While $G_1$ ($G_2$) is running, only transition $t_1^1$ ($t_1^2$) outside $T_1\cup T_2$ can fire.
	The firing of $t_1^1$ ($t_1^2$)
	moves the token in place $p_1^1$ ($p_1^2$) to place $p_2$ ($p_l$), and terminates the running of
	$G_1$ ($G_2$), yielding that the token in $p_2$ ($p_l$) can move
	along the direction $p_2\to\cdots\to p_l\to p_2$ periodically forever, but
	$G_1$ ($G_2$) will never run again. Hence net $G$ may fire only infinite transition sequences
	$t_0^1 s t_1^1(t_2\dots t_l)^{\omega}$, $t_0^1 s'$, $t_0^2 r t_1^2t_l(t_2\dots t_l)^{\omega}$,
	or $t_0^2 r'$, where $s\in(T_1)^*$, $s'\in(T_1)^{\omega}$, $r\in(T_2)^*$,
	$r'\in(T_2)^{\omega}$. So $G$ can generate only configurations
	$\s_G\s(\s_G)^{\omega}$ or $\s_G\s'$
	where $\s\in\Sig^*$, $\s'\in\Sig^{\omega}$.
	Note that for some nets $G_1$ and $G_2$, the corresponding net $G$
	never fires $t_0^1s'$ or $t_0^2r'$ as above, e.g., when $\LM(G_1)\cup\LM(G_2)$ is finite;
	but for all $G_1$ and $G_2$, the corresponding $G$ fires
	$t_0^1 s t_1^1(t_2\dots t_l)^{\omega}$ and $t_0^2 r t_1^2t_l(t_2\dots t_l)^{\omega}$
	as above.

	$n=2$:

	Let $l=3$. We partition the set $\Rt(N_G,M_0^G)$ of reachable markings of net $G$ as follows:
	\begin{equation}\label{eqn11_Det_PN}
		\begin{split}
			R_1 =& \{M\in\N^{P_G}|M(p_0)\text{ or }M(p_1^1)\text{ or }M(p_2)=1,
			M(p_1^2)=M(p_3)=0\}\\
			&\cap\Rt(N_G,M_0^G),\\
			R_2 =& \{M\in\N^{P_G}|M(p_{1}^2)\text{ or }M(p_3)=1,
			M(p_0)=M(p_1^1)=M(p_2)=0\}\\
			&\cap \Rt(N_G,M_0^G).
		\end{split}
	\end{equation}

	If $\LM(G_1)\ne\LM(G_2)$, without loss of generality, we assume that there exists $\s\in\LM(G_1)\setminus\LM(G_2)$.
	Then when $G$ generates configuration $\s_G\s(\s_G)^{\omega}$, it can fire only transition sequences
	$t_0^1 st_1^1(t_2t_3)^{\omega}$,
	where $s\in (T_1)^*$, $\ell_G(s)=\s$. It can be directly seen 
	for each positive integer $k$,
	$\emptyset\ne\Mt(G,\s_G\s(\s_G)^k)\subset R_{k\!\!\!\mod\!2+1}$, where
	$k\mod 2$ means the remainder of $k$ divided by $2$.
	That is, net $G$ is weakly approximately detectable with respect to partition \eqref{eqn11_Det_PN}.

	Next we assume that $\LM(G_1)=\LM(G_2)$. Note that net $G$ generates only configurations
	$\s_G\s'$ or $\s_G\s(\s_G)^{\omega}$, where $\s'\in\Sig^{\omega}$, $\s\in\Sig^*$.
	For the former case, for each prefix $\s''$ of $\s'$, there exist firing sequences $s\in(T_1)^*$ of
	net $G_1$ and $r\in(T_2)^*$ of net $G_2$ such that $\ell_G(s)=\ell_G(r)=\s''$, and markings
	$M_G,M_G'\in\N^{P_G}$ such that $M_0^G[t_0^1 s\rangle M_G$, $M_0^G[t_0^2 r\rangle M_G'$,
	$M_G(p_1^1)=1$, $M_G(p_1^2)=0$, $M_G'(p_1^1)=0$, and $M_G'(p_1^2)=1$,
	then we have $\Mt(G,\s'')\cap R_{1}\ne\emptyset$ and $\Mt(G,\s'')\cap R_{2}\ne\emptyset$.
	For the latter case,
	chosen an arbitrary prefix $\s_G\s(\s_G)^k$ of $\s_G\s(\s_G)^{\omega}$, where $k$ is an arbitrary positive
	integer, we have there exist firing sequences $s\in(T_1)^*$ of net $G_1$ and
	$r\in(T_2)^*$ of net $G_2$ such that $\ell_G(s)=\ell_G(r)=\s$
	and net $G$ can fire both $t_0^1 ss'$ and
	$t_0^2 rr'$, where $s'$ and $r'$ are $k$ length prefixes of $(t_2t_3)^{\omega}$
	and $(t_3t_2)^{\omega}$, respectively. Since $G$ will fire both
	$t_0^1 ss'$ and $t_0^2 rr'$, we have
	$\Mt(G,\s_G\s(\s_G)^k)\cap R_{1}\ne\emptyset$ and
	$\Mt(G,\s_G\s(\s_G)^k)\cap R_{2}\ne\emptyset$.
	Hence for each positive integer $k$,
	$\Mt(G,\s_G\s(\s_G)^k)$ intersects both $R_{1}$ and $R_{2}$. 
	We have checked all label sequences generated by $G$,
	hence $G$ is not weakly approximately detectable with respect to partition
	\eqref{eqn11_Det_PN}.

	$n=3$:

	Let $l=3$. We partition the set $\Rt(N_G,M_0^G)$ of reachable markings of net $G$ as follows:
	\begin{equation}\label{eqn10_Det_PN}
		\begin{split}
			R_1 =& \{M\in\N^{P_G}|M(p_0)\text{ or }M(p_1^1)=1,
			M(p_1^2)=M(p_2)=M(p_3)=0\}\\
			&\cap\Rt(N_G,M_0^G),\\
			R_2 =& \{M\in\N^{P_G}|M(p_{2})=1,
			M(p_0)=M(p_1^1)=M(p_1^2)=M(p_3)=0\}\\
			&\cap \Rt(N_G,M_0^G),\\
			R_{3} =& \{M\in\N^{P_G}|M(p_1^2)\text{ or }M(p_3)=1,
			M(p_0)=M(p_1^{1})=M(p_2)=0\}\\
			&\cap \Rt(N_G,M_0^G).
		\end{split}
	\end{equation}

	Similarly to the case $n=2$, we also have that $\LM(G_1)\ne\LM(G_2)$ if and only if net $G$
	is weakly approximately detectable with respect to partition \eqref{eqn10_Det_PN}.

	$n>3$:

	Let $l=n-1$. We partition the set $\Rt(N_G,M_0^G)$ of reachable markings of net $G$ as follows:
	\begin{equation}\label{eqn1_Det_PN}
		\begin{split}
			R_1 =& \{M\in\N^{P_G}|M(p_0)\text{ or }M(p_1^1)=1,
			M(p_1^2)=M(p_j)=0,j\in[2,l]\}\\
			&\cap\Rt(N_G,M_0^G),\\
			R_i =& \{M\in\N^{P_G}|M(p_0)=M(p_1^1)=M(p_1^2)=0,M(p_{i})=1,M(p_j)=0,\\
			&j\in[2,l]\setminus\{i\}\}
			\cap \Rt(N_G,M_0^G),\quad i\in[2,l],\\
			R_{l+1} =& \{M\in\N^{P_G}|M(p_1^2)=1,
			M(p_0)=M(p_1^{1})=M(p_j)=0,j\in[2,l]\}\\
			&\cap \Rt(N_G,M_0^G).
		\end{split}
	\end{equation}

	Similarly we also have that $\LM(G_1)\ne\LM(G_2)$ if and only if net $G$
	is weakly approximately detectable with respect to partition \eqref{eqn1_Det_PN}.

\end{proof}

\subsection{Finite automata}

Next, we study the complexity of deciding weak approximate detectability of finite automata.

An exponential-time algorithm for verifying weak detectability of a finite automaton
$\Scal$ under Assumption
\ref{assum1_Det_PN} is given in \cite{Shu2011GDetectabilityDES}, but the algorithm actually 
applies to every $\Scal$ satisfying $\LM^{\omega}(\Scal)\ne\emptyset$ which is weaker than Assumption 
\ref{assum1_Det_PN}. Automaton $\Scal$ such that $\LM^{\omega}(\Scal)=\emptyset$ is naturally weakly detectable
and weakly approximately detectable (with respect to very finite partition of its set of reachable states)
as well,
and the condition $\LM^{\omega}(\Scal)\ne\emptyset$ can be verified in polynomial time (see Proposition
\ref{prop8_Det_PN}). 
Note that in Assumption \ref{assum1_Det_PN},
\eqref{item12_Det_PN} is actually a little weaker than the counterpart in 
\cite{Shu2007Detectability_DES,Shu2011GDetectabilityDES}, as in these two papers, there is no requirement
``reachable from an initial state''. However, one easily sees that existence of a cycle not 
reachable from an initial state consisting of only unobservable events does not violate the 
verification results for weak detectability given in \cite{Shu2011GDetectabilityDES}.

\begin{assumption}\label{assum1_Det_PN}
	An LSTS ${\mathcal S}=(X,T,X_0,\to,\Sig,\ell)$ satisfies
	\begin{enumerate}[(i)]
		\item\label{item11_Det_PN}
			$\mathcal S$ is deadlock-free, 
		\item\label{item12_Det_PN} 	
			no cycle in $\mathcal S$ reachable from an initial state contains only unobservable events,
			i.e.,
			for every reachable state $x\in X$ and every
			nonempty unobservable event sequence $s$, there exists no transition
			sequence $x\xrightarrow[]{s}x$ in $\mathcal S$.
	\end{enumerate}
\end{assumption}

In Assumption \ref{assum1_Det_PN}, \eqref{item11_Det_PN} guarantees that the automaton never halts,
\eqref{item12_Det_PN} ensures that for each infinite event sequence generated by the automaton, the
corresponding label sequence is also of infinite length.

\begin{proposition}\label{prop8_Det_PN}
	The property $\LM^{\omega}(\Scal)=\emptyset$ for a finite automaton $\mathcal S$
	can be verified in linear time of the size of $\Scal$. 
\end{proposition}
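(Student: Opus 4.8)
The plan is to reduce the emptiness test to a cycle-detection problem on the accessible part $\Acc(\Scal)$. First I would establish the characterization that $\LM^{\omega}(\Scal)\ne\emptyset$ if and only if there is a cycle reachable from an initial state containing at least one observable transition. The starting observation is that an infinite label sequence is exactly the image $\ell(t_1t_2\cdots)$ of an infinite event sequence $t_1t_2\cdots$ generated by $\Scal$ in which infinitely many $t_i$ are observable, since each event labeled by $\ep$ contributes nothing to the output; thus $\LM^{\omega}(\Scal)\ne\emptyset$ amounts to the existence of such a run from $X_0$.

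For the ``if'' direction, given a reachable cycle $x\xrightarrow[]{w}x$ whose label of $w$ is nonempty (i.e.\ $w$ contains an observable event), I would reach $x$ from an initial state by a finite run $u$ and then concatenate $w$ infinitely often; the infinite run $uww\cdots$ contains at least one observable event per copy of $w$, so $\ell(uww\cdots)\in\Sig^{\omega}$ and hence $\LM^{\omega}(\Scal)\ne\emptyset$. For the ``only if'' direction, I would take any infinite run from an initial state whose label is infinite, so it fires infinitely many observable transitions. Because $X$ is finite, some reachable state $x$ is visited infinitely often; the suffix of the run after the first visit to $x$ decomposes into successive loops from $x$ back to $x$, and since observable transitions occur infinitely often at least one such loop carries an observable transition, yielding the required reachable cycle.

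Given this characterization, I would describe the linear-time procedure in three phases. First compute $\Acc(\Scal)$ by a single depth-first or breadth-first search from $X_0$, in time linear in the size of $\Scal$; all states and transitions retained are then reachable, so any cycle detected is automatically reachable. Second, compute the strongly connected components of the underlying directed graph of $\Acc(\Scal)$ by Tarjan's algorithm, again in linear time. Third, scan every observable transition $(x,t,x')$ and test whether $x$ and $x'$ lie in the same strongly connected component, the self-loop $x=x'$ being the degenerate case; such a transition exists iff $\Acc(\Scal)$ has a cycle through an observable transition, which by the characterization is equivalent to $\LM^{\omega}(\Scal)\ne\emptyset$. Reporting the negation decides $\LM^{\omega}(\Scal)=\emptyset$, and each phase runs in time linear in $|X|+|{\to}|$, i.e.\ linear in the size of $\Scal$.

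I expect the main obstacle to be the careful justification of the ``only if'' direction --- arguing from the finiteness of $X$ that infinitely many observable transitions in a single run force an observable transition to lie on a genuine cycle --- together with the bookkeeping point that an observable self-loop must be recognized as a cycle, so that a trivial single-vertex strongly connected component carrying an observable self-loop is not mistakenly discarded.
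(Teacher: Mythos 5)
Your proposal is correct and follows essentially the same route as the paper: both reduce $\LM^{\omega}(\Scal)\ne\emptyset$ to the existence of a reachable cycle containing an observable transition, prove the characterization by concatenating the cycle (``if'') and by pigeonholing infinitely many observable transitions onto loops through a recurring state (``only if''), and then decide it in linear time via the accessible part and a strongly-connected-component decomposition. The only cosmetic difference is that the paper first collapses $\Scal$ into an auxiliary ``observation automaton'' with a single observable symbol before computing the components, whereas you test observable transitions directly against the component partition; the two checks are equivalent.
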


\begin{proof}
	Consider a finite automaton $\mathcal S=(X,T,X_0,\to,\Sig,\ell)$,
	it is not difficult to see that $\LM^{\omega}(\mathcal{S})\ne\emptyset$ if and only if 
	there is an infinite transition sequence $x_0\xrightarrow[]{s_1}x_1\xrightarrow[]{s_2}\cdots$
	with $x_0\in X_0$ such that $s_i\in T^*$ and $\ell(s_i)\in\Sig^{+}$ for each $i\in\Z_{+}$ if and only if
	there exists a transition sequence $x_0\xrightarrow[]{s_1'}x_1'\xrightarrow[]{s_2'}x_1'$
	with $x_0\in X_0$ and $\ell(s_2')\in\Sig^+$. 

	Construct an {\it observation automaton}
	\begin{equation}\label{eqn70_Det_PN}
		\mathcal \Obs(\Scal)=(X,\{\varepsilon,\hat\epsilon\},X_0,\to',\{\hat\epsilon\},\ell')
	\end{equation}
	in linear time of the size of $\Scal$,
	where $\to'\subset X\times\{\varepsilon,\hat\epsilon\}\times X$, $\ell'(\varepsilon)=\epsilon$,
	$\ell'(\hat\epsilon)=\hat\epsilon$,
	for every two states $x,x'\in X$, $(x,\hat\epsilon,x')\in\to'$
	if there exists $t\in T$ such that $(x,t,x')\in\to$ and $\ell(t)\ne\epsilon$; $(x,\varepsilon,x')
	\in\to'$ if there exists $t\in T$ such that $(x,t,x')\in\to$ and for all $t'\in T$
	with $(x,t',x')\in\to$, $\ell(t')=\epsilon$. Here the label function $\ell'$ is also naturally
	extended to $\ell':\{\varepsilon,\hat\epsilon\}^*\cup\{\varepsilon,\hat\epsilon\}^{\omega}\to
	\{\hat\epsilon\}^*\cup\{\hat\epsilon\}^{\omega}$.
	One sees that $\LM^{\omega}(\mathcal S)\ne\emptyset$
	if and only if in $\Obs(\Scal)$ there is a transition sequence $x_0\xrightarrow[]{s}x
	\xrightarrow[]{s'}x$ such that $x_0\in X_0$, $s,s'\in\{\varepsilon,\hat\epsilon\}^*$,
	and $\ell'(s')\ne\epsilon$.
	Next, we show that this condition can be trivially verified in linear time of the size
	of $\Scal$. 

	Firstly, find the accessible part $\Acc(\Obs(\Scal))$, which takes linear time.
	Secondly, compute all strongly connected components of $\Acc(\Obs(\Scal))$.
	There are well-known algorithms
	for computing all strongly connected components of $\Acc(\Scal)$ in linear time,
	e.g., the slight variant of the depth-first search. Thirdly, observe that the condition holds
	if and only if in some strongly connected component, there is an observable transition,
	because each cycle belongs to only one strongly 
	connected component. This can also be checked trivially in linear time.
\end{proof}

\begin{theorem}\label{thm5_Det_PN}
	\begin{enumerate}\item
			The weak approximate detectability of finite automata 
	can be verified in PSPACE.
	\item
	Deciding weak approximate detectability of deterministic finite automata
	whose events can be directly observed is PSPACE-hard.
	\end{enumerate}
\end{theorem}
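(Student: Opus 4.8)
The plan is to prove the two parts separately, handling membership in PSPACE via a nondeterministic search over an abstracted state space and handling PSPACE-hardness by a direct reduction from weak detectability, whose hardness on this same restricted class is already established in \cite{Zhang2017PSPACEHardnessWeakDetectabilityDES}.

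\textbf{Membership in PSPACE.} First I would reduce weak approximate detectability to a reachability question on a ``certainty'' structure built from subsets of $X$. The natural object is the \emph{observer} (subset-construction) automaton $\Obs$ whose states are the sets $\Mt(\Scal,\s')\subset X$; weak approximate detectability asks for an infinite observed sequence $\s$ all of whose sufficiently long prefixes land in an observer-state that is nonempty and entirely contained in a single partition cell $R_{i}$. Call an observer-state \emph{good} if it is nonempty and $\subset R_i$ for some $i\in[1,n]$. Then $\Scal$ is weakly approximately detectable with respect to $\{R_1,\dots,R_n\}$ exactly when, in the observer, there is a reachable \emph{good} state from which one can reach a cycle consisting entirely of good states that also carries at least one observable label (so the corresponding $\omega$-word is genuinely infinite). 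The key point is that I must not construct the exponentially large observer explicitly: each observer-state is a subset of $X$, so it has polynomial size, and the successor relation (compute $\Mt(\cdot,\sigma)$ including the $\ep$-closure of Definition above) is computable in polynomial space. Hence I can guess, on the fly, a path to a good observer-state $S_0$, then guess a good observer-state $S$ reachable from $S_0$, and finally verify $S$ lies on a good observable cycle by guessing a return path from $S$ to $S$ through only good states using at least one observable transition. Each guessed step stores only a constant number of subsets of $X$ plus a step counter bounded by $2^{|X|}$ (so $O(|X|)$ bits), so the whole procedure runs in nondeterministic polynomial space; by Savitch's theorem this gives PSPACE. The main obstacle in this part is the cycle condition: I must ensure the cycle is reachable \emph{and} produces infinitely many observable labels so that $\s\in\LM^{\omega}(\Scal)$, which is why the search records whether an observable transition is used along the return path rather than merely testing raw state reachability.

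\textbf{PSPACE-hardness.} For the lower bound I would reduce from weak detectability of deterministic finite automata whose events can be directly observed, which is PSPACE-hard by \cite{Zhang2017PSPACEHardnessWeakDetectabilityDES}. Given such an automaton $\Scal=(X,T,X_0,\to,\Sig,\ell)$, the idea is to choose the partition that makes ``approximately detectable'' collapse to ``detectable.'' The cleanest choice is to take the partition into singletons, $R_x=\{x\}$ for each reachable $x\in X$; since every cell is a singleton, $\emptyset\ne\Mt(\Scal,\s')\subset R_{i_{\s'}}$ holds if and only if $|\Mt(\Scal,\s')|=1$, so weak approximate detectability with respect to this singleton partition is literally identical to weak detectability in the sense of Definition~\ref{def1_Det_PN}. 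This reduction is computable in polynomial time (it just enumerates the reachable states, which one can do in polynomial space and, for deterministic automata, conveniently), and it preserves the restriction that the automaton is deterministic with directly observable events, so the hardness transfers verbatim. The only subtlety to address is the requirement $n>1$ in Definition~\ref{def2_Det_PN}: if $\Scal$ has only one reachable state the instance is trivial, and otherwise the singleton partition already has $n=|X_{\mathrm{reach}}|\ge 2$ cells, so the definition applies directly.

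Combining the two parts establishes that weak approximate detectability of finite automata is PSPACE-complete on the stated restricted class, matching the known complexity of weak detectability and filling the corresponding entry of Table~\ref{tab2:_Det_PN}. I expect the membership direction to be where the real work lies, specifically in carefully phrasing the nondeterministic space-bounded search so that it certifies an \emph{infinite} good observed sequence rather than merely a long finite one; the hardness direction is essentially immediate once the singleton partition is identified as the right gadget.
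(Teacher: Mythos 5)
Your hardness argument is exactly the paper's: restrict to deterministic automata whose events can be directly observed, take the singleton partition of the reachable states, and note that weak approximate detectability then coincides literally with weak detectability, whose PSPACE-hardness on this class is \cite{Zhang2017PSPACEHardnessWeakDetectabilityDES}. The membership half is where you genuinely diverge. The paper first dispatches the case $\LM^{\omega}(\Scal)=\emptyset$, then builds a polynomial-size quotient automaton $\Scal'$ whose states are the partition cells (with a transition between two cells whenever some pair of member states has one in $\Scal$), asserts that $\Scal$ is weakly approximately detectable iff $\Scal'$ is weakly detectable, and invokes the known PSPACE algorithm for weak detectability. You instead run an on-the-fly nondeterministic polynomial-space search over the exact observer of $\Scal$, characterizing the property by the existence of a reachable nontrivial cycle of ``good'' estimate sets, and close with Savitch's theorem. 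Both routes land in PSPACE, but yours is self-contained and works with the true sets $\Mt(\Scal,\s')$, whereas the paper's quotient can over-approximate reachability (a path in $\Scal'$ need not lift to a path in $\Scal$), so your route sidesteps a subtlety that the paper's one-line equivalence glosses over. Two points you should still make explicit: (i) passing from ``a reachable nontrivial cycle of nonempty good observer states exists'' to ``the corresponding infinite label sequence lies in $\LM^{\omega}(\Scal)$'' requires a K\H{o}nig's-lemma argument (an infinite word all of whose finite prefixes lie in $\LM(\Scal)$ labels an infinite run with infinitely many observable events only because $\Scal$ is finitely branching and $\epsilon$-segments can be taken cycle-free); you flag the issue but do not prove it; (ii) the paper treats an automaton with $\LM^{\omega}(\Scal)=\emptyset$ as vacuously weakly approximately detectable, while your cycle criterion answers ``no'' in that case, so a preliminary emptiness check is needed to match the paper's convention.
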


\begin{proof}
	Consider a finite automaton ${\mathcal S}=(X,T,X_0,\to,\Sig,\ell)$
	and a partition $R=\{R_1,\dots,R_n\}$ of $X$.
	If $\mathcal{S}$ satisfies that $\LM^{\omega}(\mathcal{S})=\emptyset$,
	then it is naturally weakly approximately detectable with respect to $R$. By Proposition \ref{prop8_Det_PN},
	the property $\LM^{\omega}(\mathcal{S})=\emptyset$ can be verified in polynomial time.
	Otherwise, continue the following procedure.

	Construct a new automaton ${\mathcal S}'=(R,T,R_0,\to',\Sig,\ell)$ in polynomial time, where
	$R_0=\{R_i\in R|R_i\cap X_0\ne\emptyset,i\in[1,n]\}$, for all $r,r'\in R$ and $t\in T$,
	$(r,t,r')\in\to'$ if and only if there exist $x\in r$ and $x'\in r'$ such that 
	$(x,t,x')\in\to$. One directly sees that $\mathcal S$ is weakly approximately detectable 
	with respect to $R$ if and only if $\mathcal S'$ is weakly detectable. Hence the weak approximate
	detectability of finite automata can be verified in PSPACE, since the weak detectability of
	finite automata can be verified in PSPACE \cite{Zhang2017PSPACEHardnessWeakDetectabilityDES}.

	To prove the hardness result, we consider a deterministic ${\mathcal S}$ whose events can be directly
	observed and the partition $R=\{\{x\}|x\in X\}$. For such an automaton, it is weakly approximately
	detectable with respect to $R$ if and only if it is weakly detectable. By the PSPACE-hardness
	result of deciding weak detectability of deterministic finite automata
	whose events can be directly observed
	\cite[Theorem 4.2]{Zhang2017PSPACEHardnessWeakDetectabilityDES}, we conclude the PSPACE-hardness
	of weak approximate detectability for the same model.
\end{proof}

%
%
%
%

\begin{remark}
	The notion of weak approximate detectability can be extended from a finite partition
	of the set of reachable states to a finite cover of that set.
	Such an extension may have potential applications in supervisor reduction of supervisory control theory.
	In supervisory control theory, the optimal solution to the control problem associated with
	a DES is the supremal supervisor (the supremal controllable sublanguage), and it is important 
	to reduce the size of the supremal supervisor together with preserving some corresponding control
	actions
	\cite{Cai2016SupervisorLocalization,Su2004SupervisorReductionDES,Vaz1986SupevisorReductionDES},
	where the reduction is done based on a notion of control cover that is actually a cover of 
	the state set.
	Under this extension, it is not difficult to see that the extended weak approximate detectability
	of finite automata can also be verified in PSPACE by the powerset construction 
	used to verify weak detectability in \cite{Shu2011GDetectabilityDES},
	and it is undecidable to verify
	this notion for labeled Petri nets (from Theorem \ref{thm2_Det_PN}).
\end{remark}

\section{Instant strong detectability and eventual strong detectability}\label{sec:StrongDetectabilityLPN}

\subsection{Instant strong detectability}

The concept of instant strong detectability is formulated as follows. It implies that
each prefix of each infinite label sequence generated by an LSTS allows reconstructing the current state.

\begin{definition}[ISD]\label{def3_Det_PN}
	Consider an LSTS $\Scal=(X,T,X_0,\to,\Sig,\ell)$.
	System $\cal S$ is called {\it instantly strongly detectable} if for each prefix $\s$
	of each infinite label sequence $\s'$ of $\LM^{\omega}({\cal S})$, $|\Mt({\cal S},\s)|=1$.
\end{definition}

Note that instant strong detectability is a weaker form of determinism \cite{Jancar1994BisimilarityPetriNet}.
In fact determinism implies that the condition $|\Mt(\mathcal S, \sigma')| = 1$ holds on all finite label
sequences $\sigma'$ generated by $\mathcal S$,
while the definition of instant strong detectability only requires that condition to
hold on the finite prefixes of infinite label sequences generated by $\mathcal S$.

It is trivial to see that instant strong detectability is strictly weaker than determinism.
Consider labeled Petri net $G=(N,M_0,\Sig,\ell)$ (Fig. \ref{fig1_Det_PN}),
where $N=(\{p_1,p_2,p_3\},\{t_1,t_2,t_3\},Pre,Post)$, $Pre$ and $Post$ are shown in 
Fig. \ref{fig1_Det_PN}, $M_0=(1,0,0)$ (in the order $(p_1,p_2,p_3)$),
$\Sig=\{a,b\}$, $\ell(t_1)=a$, $\ell(t_2)=\ell(t_3)=b$.
The language and $\omega$-language generated $G$ are $\LM(G)=a^*+a^*b=\{a^n|n\in\N\}\cup\{a^nb|n\in\N\}$
and $\LM^{\omega}(G)=\{a^{\omega}\}$, respectively. For all $n\in\N$,
$\Mt(G,a^n)=\{(1,0,0)\}$, $\Mt(G,a^nb)=\{(0,1,0),(0,0,1)\}$.
By definition the net is instantly strongly detectable but does not satisfy determinism.
Since $G$ has only finitely many markings, it is also a deterministic finite automaton. The automaton 
is also instantly strongly detectable but does not satisfy determinism.
		\begin{figure}[htbp]
		\tikzset{global scale/.style={
    scale=#1,
    every node/.append style={scale=#1}}}
		\begin{center}
			\begin{tikzpicture}[global scale = 1.0,
				>=stealth',shorten >=1pt,thick,auto,node distance=1.5 cm, scale = 0.8, transform shape,
	->,>=stealth,inner sep=2pt,
				every transition/.style={draw=red,fill=red,minimum width=1mm,minimum height=3.5mm},
				every place/.style={draw=blue,fill=blue!20,minimum size=7mm}]
				\tikzstyle{emptynode}=[inner sep=0,outer sep=0]
				\node[place,tokens=1,label=above:$p_1$] (p1) {};
				\node[transition,label=above:$t_2(b)$,above right of =p1] (t2) {}
				edge [pre] node[above, sloped] {} (p1);
				\node[transition,label=above:$t_1(a)$,left of =p1] {}
				edge [pre, bend left] node[above, sloped] {} (p1)
				edge [post, bend right] node[below, sloped] {} (p1);
				\node[place,label=above:$p_2$,right of = t2] (p2) {}
				edge [pre] (t2);
				\node[transition,label=above:$t_3(b)$,below right of =p1] (t3) {}
				edge [pre] (p1);
				\node[place,label=above:$p_3$,right of = t3] (p3) {}
				edge [pre] (t3);
			\end{tikzpicture}
			\caption{A labeled Petri net that is instantly strongly detectable but does not 
			satisfy determinism, where each arc is with weight $1$.}
			\label{fig1_Det_PN}
		\end{center}
	\end{figure}



\subsubsection{Finite automata}
\label{subsec:ISDofFA}

Consider a finite automaton $\mathcal{S}$, we next construct
its concurrent composition $\CCa(\mathcal{S})$.
Using $\CCa(\Scal)$ we will verify different notions of strong detectability for $\Scal$.
The proposed method applies to all finite automata even to those that do not satisfy
Assumption \ref{assum1_Det_PN}.

Consider a finite automaton ${\mathcal S}=(X,T,X_0,\to,\Sig,\ell)$.
We construct its concurrent composition 
\begin{equation}\label{eqn48_Det_PN}
	\CCa(\Scal)=(X',T',X_0',\to')
\end{equation} as follows:
\begin{enumerate}
	\item $X'=X\times X$;
	\item $T'=T_o'\cup T_{\epsilon}'$, where $T_o'=\{(\breve{t},\breve{t}')|\breve{t},\breve{t}'\in T,
		\ell(\breve{t})=\ell(\breve{t}')\in\Sig\}$,
		$T_{\epsilon}'=\{(\breve{t},\epsilon)|\breve{t}\in T,\ell(\breve{t})=\epsilon\}\cup
		\{(\epsilon,\breve{t})|\breve{t}\in T,\ell(\breve{t})=\epsilon\}$;
	\item $X_0'=X_0\times X_0$;
	\item for all $(\breve{x}_1,\breve{x}_1'),(\breve{x}_2,\breve{x}_2')\in X'$, $(\breve{t},\breve{t}')
		\in T_o'$, $(\breve{t}'',\epsilon)\in T_{\epsilon}'$,
		and $(\epsilon,\breve{t}''')\in T_{\epsilon}'$,
		\begin{itemize}
			\item $((\breve{x}_1,\breve{x}_1'),(\breve{t},\breve{t}'),(\breve{x}_2,\breve{x}_2'))\in\to'$ 
				if and only if $(\breve{x}_1,\breve{t},\breve{x}_2),(\breve{x}_1',\breve{t}',\breve{x}_2')\in\to$,
			\item $((\breve{x}_1,\breve{x}_1'),(\breve{t}'',\epsilon),(\breve{x}_2,\breve{x}_2'))\in\to'$ 
				if and only if $(\breve{x}_1,\breve{t}'',\breve{x}_2)\in\to$, $\breve{x}_1'=\breve{x}_2'$,
			\item $((\breve{x}_1,\breve{x}_1'),(\epsilon,\breve{t}'''),(\breve{x}_2,\breve{x}_2'))\in\to'$ 
				if and only if $\breve{x}_1=\breve{x}_2$, $(\breve{x}_1',\breve{t}''',\breve{x}_2')\in\to$.
		\end{itemize}
	\end{enumerate}

For an event sequence $s'\in (T')^{*}$, we use $s'(L)$ and $s'(R)$ to denote its left and right
components, respectively. Similar notation is applied to states of $X'$. 
In addition, for every $s'\in(T')^{*}$, we use $\ell(s')$
to denote $\ell(s'(L))$ or $\ell(s'(R))$, since $\ell(s'(L))=\ell(s'(R))$. 
In the above construction, $\mathcal{S}'$ aggregates every pair of transition sequences of 
$\mathcal{S}$ producing the same label sequence. In addition, $\Scal'$ has at most
$|X|^2$ states and at most $|X|^2(2|T_{\ep}||X|+\sum_{\sigma\in\Sig}|\ell^{-1}(\sigma)|^2
|X|^2)$ transitions, where the number does not exceed $|X|^2(2|T_{\ep}||X|+|T_o|^2|X|^2)$.
Hence it takes time $O(2|X|^3|T_\ep|+|X|^4\sum_{\sigma\in\Sig}|\ell^{-1}(\sigma)|^2)$ to construct $\CCa(\Scal)$.
For the special case when all observable events can be directly observed studied in \cite{Shu2011GDetectabilityDES},
the complexity reduces to $O(2|X|^3|T_\ep|+|X|^4|T_o|)$.
See the following example.

\begin{example}\label{exam3_Det_PN}
	A finite automaton ${\mathcal S}$ and its concurrent composition $\CCa(\Scal)$ are
	shown in Fig. \ref{fig19_Det_PN}.
			\begin{figure}
        \centering
\begin{tikzpicture}[>=stealth',shorten >=1pt,auto,node distance=2.0 cm, scale = 1.0, transform shape,
	->,>=stealth,inner sep=2pt,state/.style={shape=rectangle,draw,top color=red!10,bottom color=blue!30}]

	\node[initial, initial where =above, state] (s0) {$s_0$};
	\node[state] (s1) [below of =s0] {$s_1$};
	\node[state] (s2) [left of =s0] {$s_2$};
	
	\path [->]
	(s0) edge [out = 210, in = 240, loop] node [below, sloped] {$\begin{matrix}t_1(a)\\t_2(\epsilon)\end{matrix}$} (s0)
	(s0) edge node [above, sloped] {$t_3(b)$} (s1)
	(s0) edge node [above, sloped] {$t_4(b)$} (s2)
	(s1) edge [loop right] node [above, sloped] {$t_5(b)$} (s1)
	;

	\node[state] (s1s2) [right of =s0] {$s_1,s_2$};
	\node[state,initial,initial where=above] (s0s0) [right of =s1s2] {$s_0,s_0$};
	\node[state] (s2s1) [below of =s0s0] {$s_2,s_1$};
	\node[state] (s1s1) [right of =s0s0] {$s_1,s_1$};
	\node[state] (s2s2) [below of =s1s1] {$s_2,s_2$};

	\path [->]
	(s0s0) edge [out = 210, in = 240, loop] node [below, sloped] {$\begin{matrix}(t_1,t_1)\\(t_2,\epsilon)\\(\epsilon,t_2)\end{matrix}$} (s0s0)
	(s0s0) edge node [above, sloped] {$(t_3,t_4)$} (s1s2)
	(s0s0) edge node [above, sloped] {$(t_3,t_3)$} (s1s1)
	(s0s0) edge node [above, sloped] {$(t_4,t_3)$} (s2s1)
	(s0s0) edge node [above, sloped] {$(t_4,t_4)$} (s2s2)
	(s1s1) edge [loop right] node [above, sloped] {$(t_5,t_5)$} (s1s1)
	;

        \end{tikzpicture}
		\caption{A finite automaton (left) and its concurrent composition (right, only 
		the accessible part illustrated).}
		\label{fig19_Det_PN}
	\end{figure}
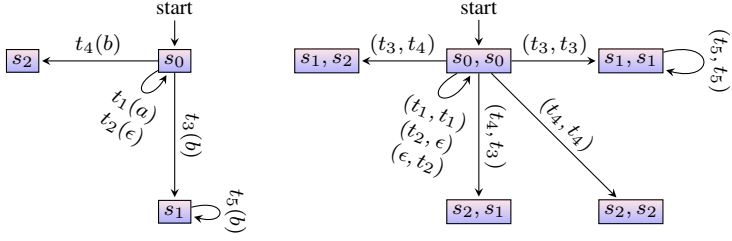
\end{example}

Consider a finite automaton $\Scal=(X,T,X_0,\to,\Sig,\ell)$.
In order to verify different notions of detectability for finite automata, we also need
to construct a {\it bifurcation automaton}
\begin{equation}\label{eqn71_Det_PN}
	\mathcal \Bifur(\Scal)=(X,\{\bar\epsilon,\check\epsilon\},X_0,\to',\{\bar\ep,\check\epsilon\},\ell')
\end{equation}
in linear time of the size of $\Scal$,
where $\to'\subset X\times\{\bar\epsilon,\check\epsilon\}\times X$, $\ell'(\bar\epsilon)=\bar\epsilon$,
$\ell'(\check\epsilon)=\check\epsilon$, $\ell'$ is also naturally
extended to $\ell':\{\bar\epsilon,\check\epsilon\}^*\cup\{\bar\epsilon,\check\epsilon\}^{\omega}\to
\{\bar\epsilon,\check\epsilon\}^*\cup\{\bar\epsilon,\check\epsilon\}^{\omega}$,
transitions $x\xrightarrow[]{\bar\ep}x'$ are called {\it fair transitions},
transitions $x\xrightarrow[]{\check\ep}x'$ are called {\it bifurcation transitions},
for every two states $i,j\in X$,
(1) $(j,\bar\ep,i),(j,\check\ep,i)\notin\to'$ if $\neg A_1$,
(2) $(x,\bar\ep,x')\in\to'$ if $A_1\wedge A_{2}\wedge A_{3}$,	
(3) $(x,\check\ep,x')\in\to'$ otherwise, where
\begin{align*}
	A_1 =& (\exists t\in T)[(j,t,i)\in\to],\\
	A_{2} =& (\nexists t\in T,j'\in X)[( (j,t,j')\in\to)\wedge(\ell(t)=\epsilon)
			 \wedge(j'\ne j)],\\
	A_{3} =& (\forall t\in T)[(( (j,t,i)\in\to)\wedge(\ell(t)\ne\epsilon))\implies\\
	      & (\nexists t'\in T,j'\in X)[( (j,t',j')\in\to)\wedge(\ell(t')=\ell(t))
		  \wedge(j'\ne i)]].
\end{align*}

Ones sees that both fair transitions and bifurcations transitions can be $\ep$-transitions
or observable transitions.
Next we explain the relation between $\Bifur(\Scal)$, the original automaton $\Scal$,
and the concurrent composition $\CCa(\Scal)$.
Here (1) holds if there is no transition from state $j$ to state $i$ in 
$\Scal$; (2) holds if there exists a transition from $j$ to $i$, and none of such transitions 
has a bifurcation in $\Scal$;
and (3) holds if there is a transition from $j$ to $i$ that has a bifurcation also in $\Scal$.
For the case that (3) holds, if $A_1$ holds but $A_2$ does not hold,
then for $\Scal$ one has $\{j\}\subsetneq\Mt(\{j\},\epsilon)$ and hence $|\Mt(\{j\},\epsilon)|>1$,
for $\CCa(\Scal)$ there is a transition $(j,j)\xrightarrow[]{(\ep,\tilde t)}(j,i')$
with $\ell(\tilde t)=\ep$ and $i'\ne j$;
if $A_1$ and $A_2$ hold but $A_3$ does not hold,
then for $\Scal$ one has $|\Mt(\{j\},\epsilon)|=1$, $\{i\}\subsetneq\Mt(\{j\},\ell(\tilde t'))$,
and hence $|\Mt(\{j\},\ell(\tilde t'))|>1$ for some $\tilde t'\in T$
with $\ell(\tilde t')\ne\epsilon$ and $(j,\tilde t',i)\in\to$; for $\CCa(\Scal)$ there is a
transition $(j,j)\xrightarrow[]{(\tilde t',\tilde t')}(i,i')$ with $i'\ne i$ for the above $\tilde t'$.

One also has that for all states $x$ and $x'$, there is a transition from $x$ to $x'$
in $\Scal$ if and only if there is a transition from $x$ to $x'$ in $\Obs(\Scal)$
if and only if there is a transition from $x$ to $x'$ in $\Bifur(\Scal)$. This obvious
observation is helpful in verify different notions of detectability for finite automata.

\begin{theorem}\label{thm6_Det_PN}
	The instant strong detectability of finite automata can be verified in linear time.	
\end{theorem}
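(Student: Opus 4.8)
The plan is to decide the negation of instant strong detectability and to show that every ingredient is computable in linear time. Recall that $\Scal$ fails to be instantly strongly detectable exactly when some prefix $\s$ of some infinite label sequence of $\LM^{\omega}(\Scal)$ satisfies $|\Mt(\Scal,\s)|\ge 2$. I would first dispose of two easy regimes. If $\LM^{\omega}(\Scal)=\emptyset$, the defining condition is vacuous, so $\Scal$ is instantly strongly detectable; this case is detected in linear time by Proposition~\ref{prop8_Det_PN}. Otherwise $\LM^{\omega}(\Scal)\ne\emptyset$, hence $\ep$ is a prefix of an infinite label sequence, and I would test the \emph{initial ambiguity} $|\Mt(\Scal,\ep)|\ge 2$ (equivalently, $|X_0|\ge 2$, or the unique initial state has an unobservable transition to a distinct state): whenever it holds, $\Scal$ is not instantly strongly detectable. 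Both tests require only the $\ep$-closure of $X_0$ and a single reachability/SCC computation.

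The heart of the proof is a characterization of the remaining (non-initial) violations using the bifurcation automaton $\Bifur(\Scal)$ together with the set $L$ of \emph{live} states, i.e.\ states from which an infinite label sequence can be generated. I would declare a state live precisely when, in $\Obs(\Scal)$, it can reach a cycle carrying an observable transition; by the argument in the proof of Proposition~\ref{prop8_Det_PN}, $L$ is computable in linear time via strongly connected components. The key claim I would establish is: $\Scal$ admits a non-initial violation if and only if $\Bifur(\Scal)$ contains a bifurcation transition $x\xrightarrow{\check\ep}x'$ such that $x$ is reachable from $X_0$ and some state of $L$ is reachable from $x'$ (possibly $x'$ itself).

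To justify the claim I would exploit the relationship between $\Bifur(\Scal)$ and $\CCa(\Scal)$ recorded just before the theorem: as long as only fair transitions are traversed from an unambiguous configuration, the estimate $\Mt(\Scal,\cdot)$ stays a singleton (equivalently, $\CCa(\Scal)$ stays on its diagonal), whereas the first bifurcation transition forces $|\Mt|\ge 2$ (equivalently, it leaves the diagonal) --- either immediately through an unobservable bifurcation ($\neg A_2$), or after the next observed label through an observable same-label bifurcation ($\neg A_3$). Thus a bifurcation transition reachable from $X_0$ witnesses an ambiguous estimate at the corresponding prefix, and requiring a live state to be reachable past $x'$ guarantees that this prefix is genuinely a prefix of an infinite label sequence. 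Conversely, taking a minimal-length ambiguous prefix $\tau$ of an infinite sequence and examining whether the ambiguity is created by $\neg A_3$ at the current singleton state or by $\neg A_2$ at its successor yields exactly such a transition. Marking the states reachable from $X_0$ (a forward search) and the states that can reach $L$ (a backward search from $L$), and then scanning the bifurcation transitions, all run in linear time; combined with the two preliminary tests, this gives an overall linear-time decision procedure.

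I expect the main obstacle to be the correctness of this single-state characterization: $\Bifur(\Scal)$ records only per-state information, whereas $\Mt(\Scal,\s)$ is a set that may both grow and later shrink, so I must argue carefully that (a) a singleton estimate is preserved exactly by fair transitions, (b) the very first loss of determinism along a prefix is always realized by one of the two bifurcation patterns $\neg A_2$ or $\neg A_3$, and (c) the liveness condition correctly encodes ``$\s$ is a prefix of an infinite label sequence'' rather than merely ``$\s$ is generated''. The empty prefix, where the ambiguity originates from $X_0$ itself rather than from a transition, is the one case escaping the bifurcation-transition test, and it must be handled by the separate initial-ambiguity check.
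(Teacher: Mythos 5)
Your proposal is correct and follows essentially the same route as the paper's proof: dispose of $\LM^{\omega}(\Scal)=\emptyset$ via Proposition~\ref{prop8_Det_PN}, reject when the initial estimate is ambiguous, and otherwise characterize non-detectability by the existence of a reachable bifurcation transition of $\Bifur(\Scal)$ from whose head one can still reach a cycle carrying an observable transition, all computed by SCC/reachability searches in linear time. The only cosmetic differences are that the paper reduces to a unique initial state and lets the bifurcation test absorb the $\ep$-closure ambiguity at $x_0$ (which your separate initial-ambiguity check also covers, redundantly for that subcase), and that your set $L$ of live states is just the set of states from which the paper's $X_c$ is reachable.
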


\begin{proof}
	Consider a finite automaton $\Scal=(X,T,X_0,\to,\Sig,\ell)$ and its bifurcation 
	automaton $\Bifur(\Scal)$ defined by \eqref{eqn71_Det_PN}. If $\LM^{\omega}(\Scal)=\emptyset$,
	then $\Scal$ is naturally instantly strongly detectable. By the proof of Proposition 
	\ref{prop8_Det_PN}, it takes linear time of the size of $\Scal$ to check whether
	$\LM^{\omega}(\Scal)=\emptyset$.
	Next we assume that $\LM^{\omega}(\Scal)\ne\emptyset$. If additionally $|X_0|>1$,
	then by definition $\Scal$ is not instantly strongly detectable either. Next we additionally assume
	that there is a unique initial state.

	We claim that $\Scal$ is not instantly strongly detectable if 
	and only if in $\Scal$, there is a transition sequence 
	\begin{equation}\label{eqn72_Det_PN}
		x_0\xrightarrow[]{s_1}x_1\xrightarrow[]{t}x_2\xrightarrow[]{s_2}x_3\xrightarrow[]{s_3}x_3
	\end{equation}
	with $x_0\in X_0$,
	$x_1,x_2,x_3\in X$, $s_1,s_2,s_3\in T^*$, $t\in T$ such that $\ell(s_3)\in\Sig^+$
	and there is a bifurcation transition $x_1\xrightarrow[]{\check\ep}x_2$ in $\Acc(\Bifur(\Scal))$.

	``if'': This holds since the cycle $x_3\xrightarrow[]{s_3}x_3$ with positive-length label
	sequence can be extended to an infinite-length 
	transition sequence with infinite-length label sequence, and whether $|\Mt(\{x_0\},\ell(s_1))|>1$
	or $|\Mt(\{x_0\},\ell(s_1t'))|>1$ for some $t'\in T$ such that $\ell(t')\ne\ep$ and $(x_1,t',x_2)\in\to$
	by the notion of bifurcation automaton.

	``only if'': If $\Scal$ is not instantly strongly detectable, then there is an infinite transition
	sequence $x_0\xrightarrow[]{s_1}\bar x\xrightarrow[]{s_2}$ and a finite transition sequence
	$x_0\xrightarrow[]{s_1'}\bar x'$ such that $x_0\in X_0$, $\bar x,\bar x'\in X$, $\bar x
	\ne \bar x'$,
	$s_1,s_1'\in T^{*}$, $\ell(s_1)=\ell(s_1')$, $s_2\in T^{\omega}$, and $\ell(s_2)\in\Sig^{\omega}$.
	Then $s_1,s_1'\in T^+$ since at least one of $\bar x$ and $\bar x'$ differs from $x_0$.
	Moreover, $|\Mt(\{x_0\},\ell(s_1))|>1$.
	By the finiteness of $X$ and $\ell(s_2)\in\Sig^{\omega}$,
	in $\Scal$ there is a cycle with positive-length label sequence reachable from $\bar x$.

	We next check the above equivalent condition for instant strong detectability under the above
	two assumptions without loss of generality. See Fig. \ref{fig23_Det_PN} for a sketch.
	\begin{enumerate}
		\item Construct the accessible part $\Acc(\Obs(\Scal))$ of the observation automaton
			$\Obs(\Scal)$ of $\Scal$ defined by \eqref{eqn70_Det_PN}.
		\item Compute the set $X_c$ of all states of $\Acc(\Scal)$
			that belong to a cycle of $\Acc(\Scal)$ with positive-length label sequence.
			(Then we have $X_c\ne\emptyset$
			by the proof of Proposition \ref{prop8_Det_PN} since previously 
			we assume that $\LM^{\omega}(\Scal)\ne\emptyset$.)
		\item Compute $\Acc(\Bifur(\Scal))$. 
		\item Check whether there is a bifurcation transition $x_1\xrightarrow[]{\check\ep}x_2$ in $\Acc(\Bifur(\Scal))$ such that
			$X_c$ is reachable from $x_2$.
	\end{enumerate}
	
	The first step and the third step both take linear time of $\Scal$.

	For the second step, we firstly compute all strongly connected components of $\Acc(\Obs(\Scal))$
	in linear time of $\Scal$.
	Observe that for each strongly connected component, if it contains a transition,
	then it contains a cycle containing all its
	states and transitions. One then has that the set $X_c$ consists of all states
	of all strongly connected components of $\Acc(\Obs(\Scal))$, where each of these components
	has at least one observable transition.
	Hence $X_c$ can be computed in linear time.
	
	Recall that $\Acc(\Bifur(\Scal))$ and
	$\Acc(\Obs(\Scal))$ have the same set of states, and for every two states $x$ and $x'$,
	there is a transition from $x$ to $x'$ in $\Acc(\Bifur(\Scal))$ if and only if there is a
	transition also from $x$ to $x'$ in $\Acc(\Bifur(\Scal))$. Then the fourth step consumes
	linear time of $\Scal$ by traversing from $X_c$ all paths along the inverse direction
	of transitions. The bifurcation transition $x_1\xrightarrow[]{\check\ep}x_2$ in the fourth
	step exists if and only if transition sequence \eqref{eqn72_Det_PN} exists.

			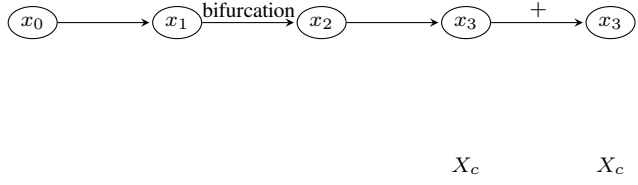
\begin{figure}
        \centering
\begin{tikzpicture}[>=stealth',shorten >=1pt,auto,node distance=1.9 cm, scale = 1.0, transform shape,
	->,>=stealth,inner sep=2pt,state/.style={shape=circle,draw,top color=red!10,bottom color=blue!30}]

	\node[elliptic state] (x0) {$x_0$};
	\node[elliptic state] (x1) [right of = x0] {$x_1$}; 
	\node[elliptic state] (x2) [right of = x1] {$x_2$};
	\node[elliptic state] (x3) [right of = x2] {$x_3$};
	\node[elliptic state] (x3') [right of = x3] {$x_3$};

	\node(empty) (x3_) [below of = x3] {$X_{c}$};
	\node(empty) (x3'_) [below of = x3'] {$X_{c}$};

	\path [->]
	(x0) edge (x1)
	(x1) edge node {bifurcation} (x2)
	(x2) edge (x3)
	(x3) edge node {$+$} (x3')
	;
        \end{tikzpicture}
		\caption{A sketch for verifying instant strong detectability of finite automata.}
		\label{fig23_Det_PN}
	\end{figure}

\end{proof}

\begin{example}\label{exam4_Det_PN}
	Reconsider the finite automaton $\Scal$ in Example \ref{exam3_Det_PN} (in the left part of Fig. 
	\ref{fig19_Det_PN}). Its observation automaton and bifurcation automaton are seen in Fig.
	\ref{fig24_Det_PN}. It has a unique initial state and generates a nonempty $\omega$-language.
	In addition, all its states are reachable. 
	According to the proof of Theorem \ref{thm6_Det_PN}, 
	one then has
	$X_c=\{s_0,s_1\}$, and in its bifurcation automaton there is a transition $s_0\xrightarrow
	[]{\check\ep}s_1$ such that $s_1$ in $X_c$ is reachable from $s_1$ in the transition. 
	Then $\Scal$ is not instantly strongly detectable.

	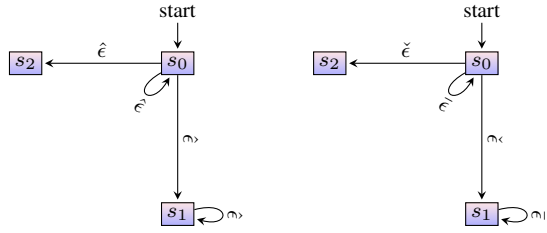
\begin{figure}
        \centering
\begin{tikzpicture}[>=stealth',shorten >=1pt,auto,node distance=2.0 cm, scale = 1.0, transform shape,
	->,>=stealth,inner sep=2pt,state/.style={shape=rectangle,draw,top color=red!10,bottom color=blue!30}]

	\node[initial, initial where =above, state] (s0) {$s_0$};
	\node[state] (s1) [below of =s0] {$s_1$};
	\node[state] (s2) [left of =s0] {$s_2$};
	
	\path [->]
	(s0) edge [out = 210, in = 240, loop] node [below, sloped] {$\hat\ep$} (s0)
	(s0) edge node [above, sloped] {$\hat\ep$} (s1)
	(s0) edge node [above, sloped] {$\hat\ep$} (s2)
	(s1) edge [loop right] node [above, sloped] {$\hat\ep$} (s1)
	;

	\node[state] (s2') [right of =s0] {$s_2$};
	\node[initial, initial where =above, state, right of = s2'] (s0') {$s_0$};
	\node[state] (s1') [below of =s0'] {$s_1$};

	\path [->]
	(s0') edge [out = 210, in = 240, loop] node [below, sloped] {$\bar\ep$} (s0')
	(s0') edge node [above, sloped] {$\check\ep$} (s1')
	(s0') edge node [above, sloped] {$\check\ep$} (s2')
	(s1') edge [loop right] node [above, sloped] {$\bar\ep$} (s1')
	;

        \end{tikzpicture}
		\caption{Observation automaton (left) and bifurcation automaton (right) of the automaton in
		the left part of Fig. \ref{fig19_Det_PN}.}
		\label{fig24_Det_PN}
	\end{figure}
\end{example}

\subsubsection{Labeled Petri nets}

In this subsection we discuss the decidability and complexity of instant strong detectability for labeled Petri nets.

If a labeled Petri net $G$ satisfies $\LM^{\omega}(G)=\emptyset$, then it is naturally
instantly strongly detectable. Actually whether the property $\LM^{\omega}(G)=\emptyset$ holds 
can be verified in 
EXPSPACE, and can also be guaranteed by the following Assumption \ref{assum2_Det_PN} that is weaker than
the widely used Assumption \ref{assum1_Det_PN} in detectability studies of DESs.

\begin{proposition}\label{prop7_Det_PN}
	Verifying whether a labeled Petri net $G$ satisfies $\LM^{\omega}(G)=\emptyset$ belongs to EXPSPACE.
\end{proposition}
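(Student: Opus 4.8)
The plan is to reduce the question ``$\LM^{\omega}(G)=\emptyset$?'' to a reachability-type question that we already know how to decide in EXPSPACE, by characterizing the existence of an infinite observed label sequence in terms of the existence of a suitable cycle in the net. Recall that $\LM^{\omega}(G)\ne\emptyset$ means $G$ admits an infinite firing sequence $t_1t_2\cdots$ enabled at $M_0$ whose label $\ell(t_1t_2\cdots)$ is infinite, i.e. which contains infinitely many observable transitions. The first step is to argue, as in the finite-automaton case of Proposition~\ref{prop8_Det_PN}, that such an infinite firing sequence producing an infinite label exists if and only if there are reachable markings $M_1,M_2$ and firing sequences $s_1,s_2$ with $M_0[s_1\rangle M_1[s_2\rangle M_2$ such that $M_2\ge M_1$, $s_2$ contains at least one observable transition, and $s_2$ is nonempty. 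The point is that if $M_2\ge M_1$ then the segment $s_2$ can be pumped indefinitely (monotonicity of Petri net firing), yielding an infinite firing sequence in which $s_2$ is repeated forever; since $s_2$ contains an observable transition, the resulting label is infinite. Conversely, any infinite firing sequence with infinite label visits, by Dickson's lemma, two markings $M_1\le M_2$ along it with an observable transition fired strictly between them.

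Second, I would express this characterization as the satisfiability of an \emph{increasing} Yen's path formula, so that Proposition~\ref{prop6_Det_PN} gives the EXPSPACE upper bound directly. The natural formula is
\begin{equation*}
	(\exists M_1,M_2)(\exists s_1,s_2)[(M_0[s_1\rangle M_1[s_2\rangle M_2)\wedge(M_2\ge M_1)\wedge\Phi],
\end{equation*}
where $\Phi$ must assert that $s_2$ fires at least one observable transition. The subtlety is that ``$s_2$ contains an observable transition'' is a \emph{transition predicate} of the form $\sum_{t\in T_o}\sharp(s_2)(t)\ge 1$, and the definition of an \emph{increasing} Yen's path formula forbids transition predicates (it requires $F$ to contain only marking predicates and to imply $M_n\ge M_1$). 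So the main obstacle is to encode the requirement ``at least one observable transition fires in $s_2$'' purely through marking predicates, without destroying the increasing property $M_2\ge M_1$.

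The way I would overcome this is by a standard \emph{net augmentation}: modify $G$ by adding one fresh place $p^\ast$ that is never removed from, and whose token count is incremented by every observable transition. Concretely, set $Post(p^\ast,t)=1$ for each $t\in T_o$ and $Pre(p^\ast,t)=Post(p^\ast,t')=Pre(p^\ast,t')=0$ otherwise, with $M_0(p^\ast)=0$; this place acts as a monotone counter of fired observable transitions and does not alter any firing behavior of the original places. Then ``$s_2$ fires an observable transition'' becomes the marking predicate $M_2(p^\ast)>M_1(p^\ast)$, which is a legitimate Type-2 marking predicate $T_1(p^\ast)>0$ on the term $T_1=M_2-M_1$, and it does not contradict $M_2\ge M_1$. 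Hence the formula
\begin{equation*}
	(\exists M_1,M_2)(\exists s_1,s_2)[(M_0[s_1\rangle M_1[s_2\rangle M_2)\wedge(M_2\ge M_1)\wedge(M_2(p^\ast)>M_1(p^\ast))]
\end{equation*}
is increasing, and its satisfiability over the augmented net is equivalent to $\LM^{\omega}(G)\ne\emptyset$.

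Finally, I would invoke Proposition~\ref{prop6_Det_PN}: the satisfiability of an increasing Yen's path formula is decidable in EXPSPACE, so $\LM^{\omega}(G)\ne\emptyset$ is decidable in EXPSPACE, and therefore so is its complement $\LM^{\omega}(G)=\emptyset$, since EXPSPACE is closed under complementation. The only points requiring care are verifying that the pumping argument genuinely produces an infinite \emph{enabled} sequence (monotonicity guarantees $s_2$ remains enabled after each repetition because the marking only grows componentwise relative to $M_1$), and confirming that the augmentation leaves $\Rt$ on the original places unchanged so that reachability questions transfer faithfully; both are routine once the encoding is set up.
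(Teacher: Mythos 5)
Your proposal is correct, and the core of it --- the characterization of $\LM^{\omega}(G)\ne\emptyset$ by the existence of a firing sequence $M_0[s_1\rangle M_1[s_2\rangle M_2$ with $M_2\ge M_1$ and an observable transition in $s_2$, proved by pumping in one direction and by the Dickson's-lemma minimal-elements argument in the other --- coincides with the paper's proof. Where you genuinely diverge is in the last step. The paper keeps the conjunct ``$\ell(\widetilde s_2)\in\Sig^+$'' in the formula, which is a transition predicate (of the form $y\odot\sharp(s_2)>0$ with $y$ the indicator of $T_o$), so the formula is \emph{not} an increasing Yen's path formula; the paper instead obtains the EXPSPACE bound by observing that this is an instance of the fair nondetermination problem and citing Subsection~6.1 of \cite{Atig2009YenPathLogicPetriNet}. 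You eliminate the transition predicate by augmenting the net with a monotone counter place $p^\ast$ fed by every observable transition and never consumed, turning the condition into the marking predicate $(M_2-M_1)(p^\ast)>0$; the resulting formula is then a bona fide increasing Yen's path formula, and Proposition~\ref{prop6_Det_PN} applies directly. Your augmentation is sound (since $Pre(p^\ast,\cdot)\equiv 0$ it changes no enabledness, and it is a polynomial-size modification), and it buys a more self-contained argument that relies only on the machinery already stated in the paper, at the modest cost of working in a modified net; the paper's route avoids modifying the net but leans on an external result about fair nondetermination. Both are valid EXPSPACE upper-bound arguments.
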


\begin{proof}
	Consider a labeled Petri net $G=(N=(P,T,Pre,Post),M_0,\Sig,\ell)$.
	Observe that $\LM^{\omega}(G)\ne\emptyset$ if and only if there exists an infinite firing sequence
	\begin{equation}\label{eqn45_Det_PN}
		M_0[s_1\rangle M_1[s_2\rangle\cdots
	\end{equation}
	such that for each $i\in\Z_{+}$, $\ell(s_i)\in\Sig^+$.

	For $G$, a sequence \eqref{eqn45_Det_PN} exists if and only if $G$ satisfies the following Yen's 
	path formula 
	\begin{equation}\label{eqn46_Det_PN}
		(\exists \widetilde{M}_1,\widetilde{M}_2)(\exists \widetilde{s}_1,\widetilde{s}_2)
		[(M_0[\widetilde{s}_1\rangle \widetilde{M}_1[\widetilde{s}_2\rangle \widetilde{M}_2)\wedge(\widetilde{M}_2
		\ge \widetilde{M}_1)\wedge(\ell(\widetilde{s}_2)\in\Sig^+)].
	\end{equation}

	The ``if'' part follows from $\widetilde{M}_1[\widetilde{s}_2\rangle \widetilde{M}_2$ being a 
	repetitive firing sequence (hence can consecutively fire for infinitely many times) and 
	$|\ell(\widetilde{s}_2)|>0$.

	For the ``only if'' part: 
	Arbitrarily fix a sequence \eqref{eqn45_Det_PN}. 
	By Dickson's lemma, in the set $\{M_0,M_1,\dots\}$, there are totally finitely many distinct minimal 
	elements. Choose $k>0$ such that $\{M_0,\dots,M_k\}$ contains the maximal number of distinct minimal elements 
	of $\{M_0,M_1,\dots\}$, then there exist $0\le k'\le k< k''$ such that $M_{k'}\le M_{k''}$.
	Then the firing sequence $M_0[s_1\dots s_{k''}\rangle M_{k''}[s_{k'+1}\dots s_{k''}\rangle M'$ satisfies 
	$M_{k''}\le M'$ and $\ell(s_{k'+1}\dots s_{k''})\in\Sig^+$.

	The satisfiability of \eqref{eqn46_Det_PN} is actually a fair nondetermination
	problem and hence belongs to EXPSPACE \cite[Subsection 6.1]{Atig2009YenPathLogicPetriNet}.
\end{proof}

\begin{assumption}\label{assum2_Det_PN}
	\begin{enumerate}[(i)]
		\item\label{item5_Det_PN}
			A labeled P/T net $G$ does not {\it terminate}, i.e.,
			there exists an infinite firing sequence at the initial marking, and
		\item\label{item6_Det_PN}
			it is prompt, i.e., there exists no repetitive firing sequence labeled by the empty string.
	\end{enumerate}
\end{assumption}


Note that the deadlock-freeness assumption (see \eqref{item11_Det_PN}
of Assumption \ref{assum1_Det_PN}) implies \eqref{item5_Det_PN}
of Assumption \ref{assum2_Det_PN}, but not vice versa;
\eqref{item6_Det_PN} of Assumption \ref{assum2_Det_PN} is actually equivalent to \eqref{item12_Det_PN}
of Assumption \ref{assum1_Det_PN} for labeled Petri Petri nets.
Note also that for a labeled P/T net $G$,
$\LM^{\omega}(G)\ne\emptyset$ implies that $G$ does not terminate,
but not vice versa, because transitions could be labeled by $\epsilon$.
Verifying termination of Petri nets (the first part of Assumption \ref{assum2_Det_PN}) is
EXPSPACE-complete by the results of \cite{Rackoff19782EXPSPACECoverabilityPetriNets,Lipton1976ReachabilityPetriNetsEXPSPACE-hard}. 
Verifying promptness of labeled Petri nets belongs to EXPSPACE \cite{Atig2009YenPathLogicPetriNet}.
In addition, promptness is equivalent to all infinite firing sequences being labeled by infinite-length sequences.

In order to characterize instant strong detectability for labeled Petri nets,
we introduce the concurrent composition of a labeled Petri net.
	Given a labeled P/T net $G=(N=(P,T,Pre,Post),M_0,\Sig,\ell)$, we construct in polynomial time
	its concurrent composition as a Petri net
	\begin{equation}\label{eqn6_Det_PN}
		\CCn(G)=(N'=(P',T',Pre',Post'),M_0')
	\end{equation}
	which aggregates every pair of firing sequences of $G$ producing the same label sequence.
	Denote $P=\{\breve{p}_1,\dots,\breve{p}_{|P|}\}$ and $T=\{\breve{t}_1,\dots,\breve{t}_{|T|}\}$, duplicate them to
	$P_i=\{\breve{p}_1^i,\dots,\breve{p}_{|P|}^i\}$ and $T_i=\{\breve{t}_1^i,\dots,\breve{t}_{|T|}^i\}$, $i=1,2$,
	where we let $\ell(\breve{t}_i^1)=\ell(\breve{t}_i^2)=\ell(\breve{t}_i)$ for all $i$ in $[1,|T|]$.
	Then we specify $G'$
	as follows:
	\begin{enumerate}
		\item $P'=P_1\cup P_2$;
		\item $T'=T_o'\cup T_{\epsilon}'$, 
			where $T_o'=\{(\breve{t}_i^1,\breve{t}_j^2)\in T_1\times T_2|i,j\in[1,|T|],\ell(\breve{t}_i^1)=
			\ell(\breve{t}_j^2)\in\Sig\}$,
			$T_{\epsilon}'=\{(\breve{t}_1,\epsilon)|\breve{t}_1\in T_1,\ell(\breve{t}_1)=\epsilon\}
			\cup\{(\epsilon,\breve{t}_2)|\breve{t}_2\in T_2,\ell(\breve{t}_2)=\epsilon\}$;
		\item for all $k\in[1,2]$, all $l\in[1,|P|]$, and all $i,j\in[1,|T|]$ such that $\ell(\breve{t}_i^1)=
			\ell(\breve{t}_j^2)\in\Sig$,
			\begin{align*}
				Pre'(\breve{p}_l^k,(\breve{t}_i^1,\breve{t}_j^2)) &= \left\{
				\begin{array}[]{ll}
					Pre(\breve{p}_l^k,\breve{t}_i^1) &\text{if }k=1,\\
					Pre(\breve{p}_l^k,\breve{t}_j^2) &\text{if }k=2,
				\end{array}
				\right.\\
				Post'(\breve{p}_l^k,(\breve{t}_i^1,\breve{t}_j^2)) &= \left\{
				\begin{array}[]{ll}
					Post(\breve{p}_l^k,\breve{t}_i^1) &\text{if }k=1,\\
					Post(\breve{p}_l^k,\breve{t}_j^2) &\text{if }k=2;
				\end{array}
				\right.
			\end{align*}
		\item for all $l\in[1,|P|]$, all $i\in[1,|T|]$ such that $\ell(\breve{t}_i^1)=\ell(\breve{t}_i^2)=\epsilon$,
			\begin{align*}
				Pre'(\breve{p}_l^1,(\breve{t}_i^1,\epsilon)) &= Pre(\breve{p}_l^1,\breve{t}_i^1),\\
				Pre'(\breve{p}_l^2,(\epsilon,\breve{t}_i^2)) &= Pre(\breve{p}_l^2,\breve{t}_i^2),\\
				Post'(\breve{p}_l^1,(\breve{t}_i^1,\epsilon)) &= Post(\breve{p}_l^1,\breve{t}_i^1),\\
				Post'(\breve{p}_l^2,(\epsilon,\breve{t}_i^2)) &= Post(\breve{p}_l^2,\breve{t}_i^2);\\
			\end{align*}
		\item $M_0'(\breve{p}_l^k)=M_0(\breve{p}_l)$ for any $k$ in $[1,2]$ and any $l$ in $[1,|P|]$.
	\end{enumerate}

A labeled Petri net and its concurrent composition are shown in Fig. \ref{fig6_Det_PN}
and Fig. \ref{fig7_Det_PN}, respectively.

\begin{figure}
	\begin{center}
	\begin{minipage}[t]{0.29\linewidth}
		\begin{center}
    		\begin{tikzpicture}[
				>=stealth',shorten >=1pt,thick,auto,node distance=2.0 cm, scale = 0.8, transform shape,
	->,>=stealth,inner sep=2pt,
				every transition/.style={draw=red,fill=red,minimum width=1.0mm,minimum height=3.5mm},
				every place/.style={draw=blue,fill=blue!20,minimum size=7mm}]
				\tikzstyle{emptynode}=[inner sep=0,outer sep=0]
				\node[place, tokens=1,label=above:$p_1$] (p1) {};
				\node[transition, label=above:$b(b)$,below right of = p1] (t2){}
				edge[post] (p1);
				\node[transition, label=above:$a(\epsilon)$,above right of = p1] (t1) {}
				edge[pre] (p1);
				\node[place, below right of = t1,label=above:$p_2$] (p2) {}
				edge[pre] (t1)
				edge[post] (t2); 
			\end{tikzpicture}
		\caption{A labeled Petri net $G$, where event $a$ is unobservable, but $b$ can be directly observed.}
		\label{fig6_Det_PN}
      \end{center}
  \end{minipage}
  \begin{minipage}[t]{0.29\linewidth}
	  \begin{center}
        \begin{tikzpicture}[->,>=stealth,node distance=1.5cm
				>=stealth',shorten >=1pt,thick,auto,node distance=2.0 cm, scale = 0.8, transform shape,
	->,>=stealth,inner sep=2pt,
				every transition/.style={draw=red,fill=red,minimum width=1.0mm,minimum height=3.5mm},
				every place/.style={draw=blue,fill=blue!20,minimum size=7mm}]
				\tikzstyle{emptynode}=[inner sep=0,outer sep=0]
				\node[place, tokens=1,label=above:$p_1'$] (p11) {};
				\node[place, tokens=1,label=above:$p_1''$, below of = p11] (p12) {};
				\node[transition, label=below:{$(b,b)$},below right of = p12] (t2) {}
				edge[post] (p11)
				edge[post] (p12);
				\node[transition, label=above:{$(a,\epsilon)$},above right of = p11] (t1') {}
				edge[pre] (p11);
				\node[transition, label=above:{$(\epsilon,a)$},above right of = p12] (t1'') {}
				edge[pre] (p12);
				\node[place, below right of = t1',label=above:$p_2'$] (p21) {}
				edge[pre] (t1')
				edge[post] (t2);
				\node[place, label=above:{$p_2''$},below of = p21] (p22) {}
				edge[pre] (t1'')
				edge[post] (t2);
       \end{tikzpicture}
	   \caption{Concurrent composition of the net in Fig. \ref{fig6_Det_PN}.}
	   \label{fig7_Det_PN}
\end{center}
\end{minipage}
\begin{minipage}[t]{0.33\linewidth}
	  \begin{tikzpicture}[->,>=stealth,node distance=1.5cm
				>=stealth',shorten >=1pt,thick,auto,node distance=2.0 cm, scale = 0.8, transform shape,
	->,>=stealth,inner sep=2pt,
				every transition/.style={draw=red,fill=red,minimum width=1.0mm,minimum height=3.5mm},
				every place/.style={draw=blue,fill=blue!20,minimum size=7mm}]
				\tikzstyle{emptynode}=[inner sep=0,outer sep=0]
				\node[place, tokens=1,label=left:$p_1'$] (p11) {};
				\node[place, tokens=1,label=left:$p_1''$, below of = p11] (p12) {};
				\node[transition, label=below:{$(b,b)$},below right of = p12] (t2) {}
				edge[post] (p11)
				edge[post] (p12);
				\node[transition, label=above:{$(a,\epsilon)$},above right of = p11] (t1') {}
				edge[pre] (p11);
				\node[transition, label=above:{$(\epsilon,a)$},above right of = p12] (t1'') {}
				edge[pre] (p12);
				\node[place, below right of = t1',label=right:$p_2'$] (p21) {}
				edge[pre] (t1')
				edge[post] (t2);
				\node[place, label=right:{$p_2''$},below of = p21] (p22) {}
				edge[pre] (t1'')
				edge[post] (t2);
				\node[transition, label=left:{$(a,\phi)$},above left of = t1'] (aphi) {}
				edge[pre] (p11)
				edge[post, bend left] (p21);
				\node[transition, label=right:{$(b,\phi)$},above right of = t1'] (bphi) {}
				edge[post, bend right] (p11)
				edge[pre] (p21);
				\node[transition, label=left:{$(\phi,a)$},below left of = t2] (phia) {}
				edge[pre] (p12)
				edge[post, bend right] (p22);
				\node[transition, label=right:{$(\phi,b)$},below right of = t2] (phib) {}
				edge[post, bend left] (p12)
				edge[pre] (p22);
       \end{tikzpicture}
	   \caption{Extended concurrent composition of the net in Fig. \ref{fig6_Det_PN}.}
	   \label{fig14_Det_PN}
\end{minipage}
\end{center}
\end{figure}

	Assume that there exists a label sequence $\s\in\LM(G)$ such that $|\Mt(G,\s)|>1$, then
	there exist transitions $t_{\mu_1},\dots,t_{\mu_n},t_{\omega_1},\dots,t_{\omega_n}\in T\cup\{\epsilon\}$,
	where $n\ge 1$,
	such that $\ell(t_{\mu_i})=\ell(t_{\omega_i})$ for all $i\in[1,n]$,
	$\ell(t_{\mu_1}\dots t_{\mu_n})=\ell(t_{\omega_1}\dots t_{\omega_n})=\s$,
	$M_0[t_{\mu_1}\dots t_{\mu_n}\rangle M_1$ and $M_0[t_{\omega_1}\dots t_{\omega_n}\rangle M_2$
	for different $M_1$ and $M_2$ both in $\N^{P}$. Then for $\CCn(G)$, we have
	$M_0'[(t_{\mu_1}^1,t_{\omega_1}^2)\dots(t_{\mu_n}^1,t_{\omega_n}^2)\rangle M'$, where
	$M'(\breve{p}_l^k)=M_k(\breve{p}_l)$, $k\in[1,2]$, $l\in[1,|P|]$, and
	$M'(\breve{p}^1_{l'})\ne M'(\breve{p}^2_{l'})$ for some $l'\in[1,|P|]$
	(briefly denoted by $M'|_{P_1}\ne M'|_{P_2}$).

	Assume that for each label sequence $\s\in\LM(G)$, we have $|\Mt(G,\s)|=1$.
	Then for all $M'\in\Rt(N',M_0')$, $M'(\breve{p}^1_l)=M'(\breve{p}^2_l)$ for each $l$ in $[1,|P|]$
	(briefly denoted by $M'|_{P_1}=M'|_{P_2}$).

\begin{theorem}\label{thm3_Det_PN}
	\begin{enumerate}[(1)]
		\item\label{item9_Det_PN} It is decidable to verify if a labeled P/T net $G$ is instantly strongly detectable.
		\item\label{item10_Det_PN} It is EXPSPACE-hard to check if a labeled P/T net $G$ with $\LM^{\omega}(G)\ne\emptyset$
			is instantly strongly detectable.
	\end{enumerate}
\end{theorem}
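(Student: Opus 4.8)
The plan is to handle the two parts by different machinery: part (1) by turning non–instant–strong–detectability into the satisfiability of a Yen's path formula over the extended concurrent composition, and part (2) by a polynomial reduction from coverability.

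For part (\ref{item9_Det_PN}) I would first restate the negation of Definition \ref{def3_Det_PN}: $G$ fails to be instantly strongly detectable exactly when some finite $\s\in\Sig^{*}$ is a prefix of an infinite sequence of $\LM^{\omega}(G)$ and satisfies $|\Mt(G,\s)|>1$. Unwinding ``prefix of an infinite sequence,'' this is the existence of two firing sequences $u_1,u_2$ with $\ell(u_1)=\ell(u_2)=\s$ reaching distinct markings $\hat M_1\neq\hat M_2$, together with the requirement that $\hat M_1$ (the branch that is continued forever) can still generate an infinite observable label sequence. The ambiguous pair is captured by a synchronised run of $\CCn(G)$ reaching a marking $M'$ with $M'|_{P_1}\neq M'|_{P_2}$, exactly as in the two observations stated just before the theorem. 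The ``continue one branch to infinity'' requirement is captured by letting the first copy run alone inside the extended concurrent composition $\CCne(G)$ (Fig.~\ref{fig14_Det_PN}) through its $(\cdot,\phi)$ transitions, and then applying the Dickson's-lemma pumping argument from the proof of Proposition~\ref{prop7_Det_PN} to that surviving copy.

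Concretely, I would prove that $G$ is not instantly strongly detectable if and only if $\CCne(G)$ satisfies the Yen's path formula
\begin{align*}
&(\exists M_1,M_2,M_3)(\exists s_1,s_2,s_3)\,[\,(M_0'[s_1\rangle M_1[s_2\rangle M_2[s_3\rangle M_3)\\
&\quad{}\wedge(M_1|_{P_1}\neq M_1|_{P_2})\wedge(s_1~\text{uses only core transitions})\\
&\quad{}\wedge(s_2,s_3~\text{use only first-copy transitions})\wedge(M_3\ge M_2)\wedge(\ell(s_3)\in\Sig^{+})\,],
\end{align*}
where $M_1|_{P_1}\neq M_1|_{P_2}$ is the disjunction over places of the Type-2 marking predicates $M_1(\breve p_l^1)>M_1(\breve p_l^2)$ and $M_1(\breve p_l^1)<M_1(\breve p_l^2)$, the restriction on $s_1$ is written as $\sharp(s_1)(t)\le 0$ for every forbidden ($\phi$-)transition, the restrictions on $s_2,s_3$ are written as $y\odot\sharp(s_i)=0$ for $i\in\{2,3\}$ with $y$ the indicator of all transitions that touch $P_2$, and $\ell(s_3)\in\Sig^{+}$ is $y'\odot\sharp(s_3)>0$ with $y'$ the indicator of the observable first-copy transitions. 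The ``if'' direction projects the run onto $P_1$ to build an infinite label sequence extending $\s$; the ``only if'' direction lifts a non-detectability witness into a synchronised $\CCne(G)$ run and then pumps the surviving branch. Since this formula contains transition predicates it is not increasing, so Proposition~\ref{prop6_Det_PN} only yields a reduction to the (decidable) reachability problem for Petri nets; this already gives decidability of non–instant–strong–detectability, hence of instant strong detectability.

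For part (\ref{item10_Det_PN}) I would reduce the coverability problem (EXPSPACE-complete, Proposition~\ref{prop4_Det_PN}) to non–instant–strong–detectability, which suffices because EXPSPACE is closed under complementation. Given $((N,M_0),M)$, build $G'$ by: (i) keeping $N$ but giving each original transition its own distinct observable label, so that the observed sequence of original symbols determines the firing sequence and hence the marking; (ii) adding two transitions $t_M^1,t_M^2$ with $Pre(\cdot,t_M^i)=M$ and a common observable label $z$, depositing a token into fresh private places $q_1$ and $q_2$ respectively; (iii) adding $c$-labeled self-loops on $q_1,q_2$ and an always-enabled $c$-labeled self-loop on a fresh marked place, guaranteeing $c^{\omega}\in\LM^{\omega}(G')\neq\emptyset$ for every instance (matching the hypothesis of the statement). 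If $M$ is covered, some reachable $M''\ge M$ enables both $t_M^1$ and $t_M^2$; the sequences ending in $t_M^1$ and in $t_M^2$ share their label yet differ on $q_1,q_2$, so the prefix ending in $z$ is ambiguous and extends to an infinite sequence via $c^{\omega}$, whence $G'$ is not instantly strongly detectable. If $M$ is not covered, $t_M^1,t_M^2$ and the $q_1,q_2$-loops never fire, so every observed sequence is an interleaving of the injectively labeled original symbols with $c$'s; as the generator place is invariant and the original labeling is injective, each observed sequence pins down a unique marking, so $G'$ is instantly strongly detectable. Thus $M$ is covered iff $G'$ is not instantly strongly detectable, giving EXPSPACE-hardness even within the class $\LM^{\omega}(G')\neq\emptyset$.

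The hardest part will be the bookkeeping in part (\ref{item9_Det_PN}): isolating the synchronised phase ($s_1$, core transitions only) from the single-copy pumping phase ($s_2,s_3$, first copy only) using only the counting predicates available in Yen's logic, and checking that the ``prefix of an infinite sequence'' clause is faithfully encoded by $M_3\ge M_2$ together with $\ell(s_3)\in\Sig^{+}$ on the surviving copy. In part (\ref{item10_Det_PN}) the only delicate point is ruling out spurious ambiguity in the uncovered case, which is exactly why the original transitions are made injectively observable and the infinite-label generator is placed on an invariant private place.
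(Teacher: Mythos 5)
Your overall strategy coincides with the paper's: part (1) is handled by characterizing non--instant--strong--detectability as the existence of a three-segment firing sequence (synchronized ambiguity, then a pumpable observable continuation of one copy), encoding this in the extended concurrent composition $\CCne(G)$ with the $(\cdot,\phi)$-transitions, and invoking Proposition~\ref{prop6_Det_PN}; part (2) is the same coverability reduction as the paper's (two identically labeled transitions consuming $M$, distinguished by private places, plus an always-enabled loop guaranteeing $\LM^{\omega}(G')\ne\emptyset$), differing only cosmetically in how the infinite tail is labeled. The Dickson's-lemma pumping argument for the ``only if'' direction and the use of transition-counting predicates to confine $s_1$ to core transitions and $s_2,s_3$ to first-copy transitions are also as in the paper.

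There is, however, one concrete gap in part (\ref{item9_Det_PN}): you assert that $M_1|_{P_1}\ne M_1|_{P_2}$ can be written as a disjunction of Type-2 marking predicates of the form $M_1(\breve p_l^1)>M_1(\breve p_l^2)$. Under the definition of Yen's path formulae used here, Type-2 predicates compare \emph{terms} at places, and the only non-constant terms are differences $M_j-M_i$ with $j>i$ of \emph{marking variables} (the fixed initial marking $M_0$ is not one of them); a lone marking variable $M_1$ is not a term, and Type-1 predicates only compare $M(p)$ to a constant. So your formula is not syntactically a Yen's path formula and Proposition~\ref{prop6_Det_PN} cannot be applied to it as written. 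The paper repairs exactly this point by adding fresh places $p_0''',p_1'''$ and a transition $r_1'''$ that must fire first, so that the disequality can be phrased as $(M_2'''-M_1''')|_{P_1}\ne(M_2'''-M_1''')|_{P_2}$, a legitimate comparison of the term $M_2'''-M_1'''$ at pairs of places; this is sound because the marking reached right after $r_1'''$ is symmetric between $P_1$ and $P_2$. Your argument needs this (or an equivalent) gadget; everything else, including the hardness reduction in part (\ref{item10_Det_PN}), goes through.
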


\begin{proof}
	\eqref{item9_Det_PN} Proof of the decidability result: 

	By Proposition \ref{prop7_Det_PN}, we first verify whether $G$ satisfies $\LM^{\omega}(G)\ne
	\emptyset$ in EXPSPACE. If $\LM^{\omega}(G)=\emptyset$, then $G$ is instantly strongly detectable. Otherwise, 
	continue the following procedure.

	Next we reduce the non-instant strong detectability problem to the satisfiability of a Yen's path formula.
	Then by Proposition \ref{prop6_Det_PN}, the instant strong detectability of labeled Petri nets is
	decidable.

	It can be seen that a labeled Petri net $G=(N=(P,T,Pre,Post),M_0,\Sig,\ell)$ with $\LM^{\omega}(G)\ne\emptyset$ 
	is not instantly strongly
	detectable if and only if there is an infinite 
	label sequence $\s\in\LM^{\omega}(G)$ such that for some prefix $\s'\sqsubset\s$, one has
	$|\mathcal{M}(G,\s')|>1$. By this observation, we claim that $G$ is not instantly strongly detectable
	if and only if there exists a firing sequence
	\begin{equation}\label{eqn47_Det_PN}
		M_0[s_1\rangle M_1[s_2\rangle M_2[s_s\rangle M_3
	\end{equation}
	such that $|\mathcal{M}(G,\ell(s_1))|>1$, $M_2\le M_3$, and $\ell(s_3)\in\Sig^+$.

	The sufficiency follows from $M_2[s_3\rangle M_3$ is a repetitive firing sequence and can fire
	for infinitely many times, that is, the infinite firing sequence 
	$$M_0[s_1\rangle M_1[s_2\rangle M_2[s_3\rangle M_3[s_3\rangle\cdots[s_3\rangle\cdots$$ 
	satisfies $\ell(s_1s_2s_3s_3\dots)\in\LM^{\omega}(G)$.

	To prove the necessity, we assume that $G$ is not instantly strongly detectable and choose an arbitrary infinite
	firing sequence
	\begin{equation}\label{eqn41_Det_PN}
		M_0[\overline{t}_1\rangle \overline{M}_1[\overline{t}_2\rangle\cdots[\overline{t}_i\rangle 
		\overline{M}_i[\overline{t}_{i+1}\rangle\cdots
	\end{equation}
	satisfying $\overline{t}_i\in T$ for all $i\in\Z_{+}$,
	$\ell(\overline{t}_1\overline{t}_2\dots)\in\LM^{\omega}(G)$, and there exists $l\in\Z_{+}$ such that
	$|\mathcal{M}(G,\ell(\overline{t}_1\dots \overline{t}_l))|>1$.
	
	By Dickson's Lemma, in \eqref{eqn41_Det_PN}, there are totally finitely many distinct minimal
	markings. Choose an arbitrary number $k>l$ such that $\{M_0,\overline{M}_1\dots,\overline{M}_k\}$ 
	contains the largest number of
	distinct minimal markings of \eqref{eqn41_Det_PN}. Choose $k'>k$ such that at least one of 
	$\overline{t}_{k+1},\dots,\overline{t}_{k'}$
	is observable and $\overline{M}_{k'}\ge \overline{M}_{l'}$ for some 
	$0\le l'\le k$. Then $\overline{t}_{l'+1}\dots \overline{t}_{k'}$ is enabled at $\overline{M}_{k'}$.
	Consider the newly obtained firing sequence
	\begin{align}\label{eqn27_Det_PN}
		M_0[\overline{t}_1\dots \overline{t}_l\rangle \overline{M}_l[\overline{t}_{l+1}\dots \overline{t}_{k'}
		\rangle \overline{M}_{k'}[\overline{t}_{l'+1}\dots \overline{t}_{k'}\rangle \overline{M}'_{k'},
	\end{align}
	where $\overline{M}_{k'}\le \overline{M}_{k'}'$, $|\ell(\overline{t}_{l'+1}\dots \overline{t}_{k'})|>0$.
	Also by $|\mathcal{M}(G,\ell(\overline{t}_1\dots \overline{t}_l))|>1$,
	\eqref{eqn27_Det_PN} satisfies \eqref{eqn47_Det_PN}.

	Consider $G$ and its concurrent composition $\CCn(G)=(N'=(P',T',Pre',\\Post'),M_0')$ 
	shown in \eqref{eqn6_Det_PN}. Add a new set
	$$T_{\phi}=T_{\phi}^1\cup T_{\phi}^2$$ of transitions into $\CCn(G)$,
	where $\phi\notin T_1\cup T_2$,
	$T_{\phi}^1=\{(\breve{t}_1,\phi)|\breve{t}_1\in T_1\}$, $T_{\phi}^2=\{(\phi,\breve{t}_2)|\breve{t}_2\in T_2\}$.
	Add the following rules: 
	for all $l\in[1,|P|]$, all $i\in[1,|T|]$,
			\begin{align*}
				Pre'(\breve{p}_l^1,(\breve{t}_i^1,\phi)) &= Pre(\breve{p}_l^1,\breve{t}_i^1),\\
				Pre'(\breve{p}_l^2,(\phi,\breve{t}_i^2)) &= Pre(\breve{p}_l^2,\breve{t}_i^2),\\
				Post'(\breve{p}_l^1,(\breve{t}_i^1,\phi)) &= Post(\breve{p}_l^1,\breve{t}_i^1),\\
				Post'(\breve{p}_l^2,(\phi,\breve{t}_i^2)) &= Post(\breve{p}_l^2,\breve{t}_i^2).
			\end{align*}
	
	The newly obtained extended concurrent composition is denoted by
	\begin{equation}\label{eqn28_Det_PN}
		\CCne(G)=(N''=(P'',T'',Pre'',Post''),M_0''),
	\end{equation}
	where $P''=P'$, $T''=T'\cup T_{\phi}$, $M_0''=M_0'$.
	For example, the corresponding extended concurrent composition of the net in Fig. \ref{fig6_Det_PN} is
	shown in Fig. \ref{fig14_Det_PN}.

	By direct observation, one sees that there exists a firing sequence \eqref{eqn47_Det_PN} in $G$ 
	if and only if in the extended concurrent composition $\CCne(G)$, there is a firing sequence 
	\begin{equation}\label{eqn29_Det_PN}
		M''_0[s_1''\rangle M''_1[s_2''\rangle M''_2[s_3''\rangle M''_3
	\end{equation}
	such that either
	\begin{equation}\label{eqn31_Det_PN}\begin{split}
		&s_1''\in (T')^*; M_1''|_{P_1}\ne M_1''|_{P_2}; s_2'',s_3''\in(T'\cup T_{\phi}^1)^*;\\
		&\exists\text{ transition }(t_1,*)\text{ in }s_3''\text{ such that }\ell(t_1)\in\Sig;
		M_2''|_{P_1}\le M_3''|_{P_1};
	\end{split}\end{equation}
	or
	\begin{equation}\label{eqn32_Det_PN}\begin{split}
		&s_1''\in (T')^*; M_1''|_{P_1}\ne M_1''|_{P_2}; s_2'',s_3''\in(T'\cup T_{\phi}^2)^*;\\
		&\exists\text{ transition }(*,t_2)\text{ in }s_3''\text{ such that }\ell(t_2)\in\Sig;
		M_2''|_{P_2}\le M_3''|_{P_2}.
	\end{split}\end{equation}

	Consequently we have for net $G$, a firing sequence \eqref{eqn47_Det_PN} exists if and only if in $\CCne(G)$ either 
	\begin{equation}\label{eqn33_Det_PN}
		\text{there exists a firing sequence } \eqref{eqn29_Det_PN}\text{ satisfying } \eqref{eqn31_Det_PN}
	\end{equation}or
	\begin{equation}\label{eqn34_Det_PN}
		\text{there exists a firing sequence } \eqref{eqn29_Det_PN}\text{ satisfying } \eqref{eqn32_Det_PN}.
	\end{equation}

	Apparently, \eqref{eqn33_Det_PN} holds if and only if \eqref{eqn34_Det_PN} holds by symmetry of $\CCne(G)$.
	Hence we only need to consider \eqref{eqn33_Det_PN}.

	One directly sees that the necessity holds. One also sees that $M_0''[s_1''\rangle M_1''$ in \eqref{eqn29_Det_PN}
	is a firing sequence of $\CCn(G)$, the left component of $M_1''[s_2''s_3''\rangle M_3''$ is a firing 
	sequence of $G$ and the right component of $s_2''s_3''$ may contain several copies of $\phi$'s.

	For the sufficiency, if \eqref{eqn33_Det_PN} holds,
	then the left component of \eqref{eqn29_Det_PN} can be extended to an infinite firing sequence of 
	$G$, and its label sequence is of length $\infty$,
	because the left component of $M''_2[s_3''\rangle M''_3$ is a repetitive firing sequence of $G$ containing 
	an observable transition. In addition, when the label sequence of the left component of $s_1''$ 
	is observed (no matter whether $\ell(s_1'')=\epsilon$), net $G$ can reach at least two different markings,
	including $M_1''|_{P_1}$ and $M_1''|_{P_2}$. 

	Based on the above discussion, we have $G$ is not instantly strongly detectable if and only if 
	in $\CCne(G)$, \eqref{eqn33_Det_PN} holds.

	Now consider $\CCne(G)$ and whether \eqref{eqn33_Det_PN} is a Yen's path formula. 
	In \eqref{eqn31_Det_PN}, ``$s_1''\in (T')^*$'' and ``$s_2'',s_3''\in(T'\cup T_{\phi}^1)^*$''
	are transition predicates; 
	``$\exists$ transition $(t_1,*)$ in $s_3''$ such that $\ell(t_1)\in\Sig$'' is also a transition 
	predicate; ``$M_2''|_{P_1}\le M_3''|_{P_1}$'' can be expressed as 
	combination of marking predicates; only ``$M_1''|_{P_1}\ne M_1''|_{P_2}$'' is not a predicate.

	Next we reduce the satisfiability of \eqref{eqn33_Det_PN} to the satisfiability of a Yen's path formula
	of a new Petri net $\CCne(G)'$, completing the proof of the decidability result.

	Add two new places $p_0'''$ and $p_1'''$ into $\CCne(G)$, where initially $p_0'''$ contains exactly $1$ token,
	but $p_1'''$ contains no token; add one new transition $r_1'''$, and arcs $p_0'''\to r_1'''\to p_1'''$,
	both with weight $1$. Also, for each transition $t$ in $\CCne(G)$, add arcs $p_1'''\to t\to p_1'''$, both with
	weight $1$. Then we obtain a new Petri net $$\CCne(G)'=(N'''=(P''',T''',Pre''',Post'''),M_0''').$$
	We then have for $\CCne(G)$, \eqref{eqn33_Det_PN} holds if and only if $\CCne(G)'$ satisfies the Yen's path formula
	\begin{equation}\label{eqn26_Det_PN}
		\begin{split}
		&(\exists M_1''',M_2''',M_3''',M_4''')(\exists s_1''',s_2''',s_3''',s_4''')[\\
		&(M_0'''[s_1'''\rangle M_1'''[s_2'''\rangle M_2'''[s_3'''\rangle M_3'''[s_4'''\rangle M_4''')\wedge\\
		&(s_1'''=r_1''')\wedge(s_2'''\in (T')^*)\wedge((M_2'''-M_1''')|_{P_1}\ne(M_2'''-M_1''')|_{P_2})\wedge\\
		&(s_3''',s_4'''\in(T'\cup T_{\phi}^1)^*)\wedge
		(\exists\text{ transition }(t_1,*)\text{ in }s_4'''\text{ such that }\ell(t_1)\in\Sig)\wedge\\
		&\left.(M_3'''|_{P_1}\le M_4'''|_{P_1})\right].
		\end{split}
	\end{equation}

	\eqref{item10_Det_PN} Proof of the hardness result:

	Next we prove the hardness result by reducing the coverability problem to the non-instant strong detectability
	problem in polynomial time.

	We are given a Petri net $G=(N=(P,T,Pre,Post),M_0)$ and a destination marking $M\in\N^{P}$, and construct
	a labeled P/T net
	\begin{equation}\label{eqn9_Det_PN}
		G'=(N'=(P',T',Pre',Post'),M_0',T\cup\{\s_{G}\},\ell)
	\end{equation}as follows
	(see Fig. \ref{fig4_Det_PN} as a sketch):
	\begin{enumerate}
		\item Add three places $p_0,p_1,p_2$, where initially $p_0$ contains exactly one token, but $p_1$
			and $p_2$ contains no token;
		\item add three transitions $t_0,t_1,t_2$, and arcs $p_0\to t_0\to p_0$, $t_1\to p_1$, $t_2\to p_2$,
			all with weight $1$; for every $p\in P$, add arcs $p\to t_1$ and $p\to t_2$, both with weight 
			$M(p)$;
		\item add label $\s_G\notin T\cup\{t_0,t_1,t_2\}$, $\ell(t)=t$ for each $t\in T\cup\{t_0\}$, $\ell(t)=\s_G$ for each $t\in \{t_1,t_2\}$.
	\end{enumerate}

	It is clear that if $M$ is not covered by $G$ then $G'$ shown in \eqref{eqn9_Det_PN} is
	instantly strongly detectable. If $M$ is covered by $G$, then there exists a firing sequence
	$M_0[\s_1\rangle M_1$ with $M_1\ge M$. Furthermore, there exist two infinite firing sequences
	\begin{align*}
		&M_0'[\s_1\rangle M_1'[t_1\rangle M_2'[t_0\rangle M_2'[t_0\rangle \cdots,\\
		&M_0'[\s_1\rangle M_1'[t_2\rangle M_2''[t_0\rangle M_2''[t_0\rangle\cdots,
	\end{align*}
	where $M_2'\ne M_2''$ since $M_2'(p_1)>0$, $M_2'(p_2)=0$, $M_2''(p_2)>0$, $M_2''(p_1)=0$;
	in both sequences, after $t_1$, all firing transitions are $t_0$. Also by $\ell(t_1)=\ell(t_2)$,
	we have $G'$ is not instantly strongly detectable.
	This reduction runs in time linear of
	the number of places of $G$ and the number of tokens of the destination marking $M$.
	Since the coverability problem is EXPSPACE-hard in the number of transitions of $G$,
	deciding non-instant strong detectability is EXPSPACE-hard in the numbers of places and transitions
	of $G'$ and the number of tokens of $M$, hence deciding instant strong detectability is also
	EXPSPACE-hard, which completes the proof.

		\begin{figure}[htbp]
		\tikzset{global scale/.style={
    scale=#1,
    every node/.append style={scale=#1}}}
		\begin{center}
			\begin{tikzpicture}[global scale = 1.0,
				>=stealth',shorten >=1pt,thick,auto,node distance=1.5 cm, scale = 0.8, transform shape,
	->,>=stealth,inner sep=2pt,
				every transition/.style={draw=red,fill=red,minimum width=1mm,minimum height=3.5mm},
				every place/.style={draw=blue,fill=blue!20,minimum size=7mm}]
				\tikzstyle{emptynode}=[inner sep=0,outer sep=0]
				\node[place,label=above:$\tilde p_1$,tokens=1] (p1t) {};
				\node[emptynode,below right of = p1t] (empty1) {};
				\node[transition,label=above:$t_1(\sigma_G)$,right of =empty1] (t1) {}
				edge[pre] node[above, sloped] {$M(\tilde p_1)$} (p1t);
				\node[emptynode,left of = p1t] (empty2) {};
				\node[transition,label=above:$t_2(\sigma_G)$,below left of =empty2] (t2) {}
				edge[pre] node[above, sloped] {$M(\tilde p_1)$} (p1t);
				\node[place,label=above:$\tilde p_2$,below left of =empty1,tokens=2] (p2t) {}
				edge [post] node[below, sloped] {$M(\tilde p_2)$} (t2)
				edge [post] node[below, sloped] {$M(\tilde p_2)$} (t1);
				\node[place,label=above:$p_1$,right of =t1] (p1) {}
				edge [pre] (t1);
				\node[place,label=above:$p_2$,left of =t2] (p2) {}
				edge [pre] (t2);
				\node[place,tokens=1,label=above:$p_0$,right of =p1] (p0) {};
				\node[transition,label=above:$t_0$,right of =p0] {}
				edge [pre, bend left] (p0)
				edge [post, bend right] (p0);

				\draw[dashed] (-1.0,-2.8) rectangle (1,1.0);
				\node at (0,-0.9) {$G$};

			\end{tikzpicture}
			\caption{Sketch for the reduction in the hardness proof of Theorem \ref{thm3_Det_PN}.}
			\label{fig4_Det_PN}
		\end{center}
	\end{figure}
	
\end{proof}

\begin{remark}
	By using the extended concurrent composition and a similar procedure as the proof of Theorem 
	\ref{thm3_Det_PN}, the decidability result for strong detectability
	of labeled Petri nets proved in \cite{Masopust2018DetectabilityPetriNet} can be strengthened
	to hold only based on the promptness assumption.
\end{remark}


\begin{remark}
	The concept of instant strong detectability of labeled Petri nets is a uniform concept.
	That is, a labeled Petri net is instantly strongly detectable if and only if it is
	instantly strongly detectable when its initial marking is replaced by each of its reachable markings.
	Formally, for a labeled Petri net $G=(N,M_0,\Sig,\ell)$, $G$ is instantly strongly detectable
	if and only if $G'=(N,M,\Sig,\ell)$ is instantly strongly detectable for each $M\in\Rt(N,M_0)$.
	The sufficiency naturally holds since $M_0\in\Rt(N,M_0)$. For the necessity, if there exists
	$M_1\in\Rt(N,M_{0})$ such that labeled Petri net $G_1=(N,M_1,\Sig,\ell)$ is not instantly strongly detectable,
	then there exist $\s_1\sqsubset\s_2\in\LM^{\omega}(G_1)$ satisfying $|\Mt(G_1,\s_1)|>1$.
	Since there exists
	$\s_0\in\LM(G)$ satisfying $M_1\in\Mt(G,\s_0)$, we have $\Mt(G,\s_0\s_1)\supset\Mt(G_1,\s_1)$ and
	$|\Mt(G,\s_0\s_1)|>1$, i.e., $G$ is not instantly strongly detectable. Hence if a labeled Petri net is
	instantly strongly detectable, in order to determine the current marking,
	one does not need to care about when the net started to run.
\end{remark}

\subsection{Eventual strong detectability}

The concepts of  strong detectability and eventual strong detectability are
given as follows. The former implies there exists a positive integer $k$ such that
for each infinite label sequence generated by a
system, each prefix of the label sequence of length greater than $k$ allows reconstructing
the current state.
The latter implies that for each infinite label sequence generated by a
system, there exists a positive integer $k$ (depending on the label sequence) such that
each prefix of the label sequence of length greater than $k$ allows doing that.
Hence the former is stronger than the latter.

\begin{definition}[SD]\label{def4_Det_PN}
	Consider an LSTS ${\mathcal S}=(X,T,X_0,\to,\Sig,\ell)$.
	System $\cal S$ is called {\it strongly detectable} if there exists a positive
	integer $k$ such that for each label sequence $\s\in \LM^{\omega}({\cal S})$,
	$|\Mt({\cal S},\s')|=1$ for every prefix $\s'$ of $\s$ satisfying $|\s'|>k$.
\end{definition}

\begin{definition}[ESD]\label{def5_Det_PN}
	Consider an LSTS ${\mathcal S}=(X,T,X_0,\to,\Sig,\ell)$.
	System ${\cal S}$ is called {\it eventually strongly detectable} if
	for each label sequence $\s\in \LM^{\omega}({\cal S})$,
	there exists a positive integer $k_{\s}$ such that
	$|\Mt({\cal S},\s')|=1$ for every prefix $\s'$ of $\s$ satisfying $|\s'|>k_{\s}$.
\end{definition}

By definition, strong detectability implies eventual strong detectability.
The following Proposition \ref{prop5_Det_PN} shows that they are not equivalent.

\begin{proposition}\label{prop5_Det_PN}
	Strong detectability strictly implies eventual strong detectability
	for labeled P/T nets and finite automata.
\end{proposition}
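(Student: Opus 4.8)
The plan is to split the statement into its two halves: the forward implication, which is immediate from the definitions, and strictness, which I would establish by exhibiting a concrete counterexample.

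First I would dispatch the easy direction. If $\Scal$ is strongly detectable (Definition~\ref{def4_Det_PN}) with witness $k$, then for every $\sigma\in\LM^{\omega}(\Scal)$ the choice $k_{\sigma}:=k$ already satisfies Definition~\ref{def5_Det_PN}, so $\Scal$ is eventually strongly detectable. Thus the whole content of the proposition is the strictness, i.e. producing a system that is eventually strongly detectable but not strongly detectable. Since every finite automaton is a special labeled P/T net, a single finite-automaton example settles both models at once, so it suffices to build one finite automaton. The guiding intuition (the one recorded just before the proposition) is that I want infinitely many infinite observed sequences whose individual resolution times $k_{\sigma}$ grow without bound, so that each sequence eventually disambiguates but no uniform bound exists.

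Concretely I would take states $x_0,x_1,x_2,x_3$ with $x_0$ initial, an observable self-loop $x_0\to x_0$ labeled $a$, two transitions $x_0\to x_1$ and $x_0\to x_2$ carrying a common label $b$ (distinct events sharing that label, so transition-determinism is preserved), transitions $x_1\to x_3$ and $x_2\to x_3$ both labeled $c$, and an observable self-loop $x_3\to x_3$ labeled $c$. This automaton is deterministic, deadlock-free, and has no unobservable transitions whatsoever, so Assumption~\ref{assum1_Det_PN} holds. Its infinite observed sequences are exactly $a^{\omega}$ and $a^{n}bc^{\omega}$ for $n\in\N$, and the state estimates are $\Mt(\Scal,a^{i})=\{x_0\}$, $\Mt(\Scal,a^{n}b)=\{x_1,x_2\}$, and $\Mt(\Scal,a^{n}bc^{j})=\{x_3\}$ for $j\ge 1$. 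Hence along each infinite observation the estimate is a singleton except possibly at the single prefix $a^{n}b$, so $k_{\sigma}=n+1$ (and $k_{\sigma}=0$ for $a^{\omega}$) witnesses eventual strong detectability. For strong detectability, however, given any $k$ the observation $\sigma=a^{k}bc^{\omega}$ has the prefix $\sigma'=a^{k}b$ of length $k+1>k$ with $|\Mt(\Scal,\sigma')|=2$; thus no uniform bound works and $\Scal$ is not strongly detectable (cf. Fig.~\ref{fig5_Det_PN}).

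The main obstacle is making the two competing requirements coexist inside a finite-state device: the ambiguous estimate must be reachable after arbitrarily long prefixes (which forces a cycle on the way to the ambiguity, here the $a$-loop at $x_0$), yet every infinite run must leave the ambiguous region and never return, so that the ambiguity is visited only finitely often along any one observation. I would resolve this tension by placing the pumping loop on a \emph{singleton} estimate strictly before a \emph{single} ambiguous branch and by letting the two post-branch transitions re-merge into one state; once this structure is fixed, the entire verification reduces to the routine computation of $\Mt(\Scal,\cdot)$ on the finitely many prefix shapes $a^{i}$, $a^{n}b$, and $a^{n}bc^{j}$ listed above.
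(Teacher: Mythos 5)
Your proposal is correct and follows essentially the same route as the paper: the forward implication is immediate from the definitions (take $k_{\sigma}:=k$), and strictness is shown by a single small deterministic, fully observable, deadlock-free example with an $a$-loop on a singleton estimate feeding one $b$-labeled ambiguous branch that resolves one observation later, which serves simultaneously as a labeled P/T net and a finite automaton. The only (inessential) difference is that your two ambiguous branches re-merge into a common state, whereas the paper's example (Fig.~\ref{fig5_Det_PN}) resolves the ambiguity by giving the two successor states distinguishable continuations ($b^{\omega}$ versus $a^{\omega}$).
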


\begin{proof}
	Consider the labeled P/T net $G$ in Fig. \ref{fig5_Det_PN}, where $a$ and $b$
	are labels of transitions.
	It can be seen that $\LM^{\omega}(G)=a^{\omega}+a^*b^{\omega}+a^*ba^{\omega}:=
	\{a^{\omega}\}\cup\{a^nb^{\omega}|n\in\N\}\cup\{a^nba^{\omega}|n\in\N\}$.
	One also has that $\Mt(G,a^n)=\{(1,0,0)\}$, $\Mt(G,a^nb)=\{(0,1,0),(0,0,1)\}$,
	$\Mt(G,a^nbb^{m+1})=\{(0,1,0)\}$, $\Mt(G,a^nba^{m+1})=\{(0,0,1)\}$
	for all $m,n\in\N$. Hence $G$ is eventually strongly detectable,
	but not strongly detectable.

	The net can be regarded as a deterministic finite automaton satisfying Assumption 
	\ref{assum1_Det_PN} when $a$ and $b$ are
	regarded as labels of events.
	By a direct observation, it is also eventually strongly detectable, but not strongly detectable.
	
		\begin{figure}[htbp]
		\tikzset{global scale/.style={
    scale=#1,
    every node/.append style={scale=#1}}}
		\begin{center}
			\begin{tikzpicture}[global scale = 1.0,
				>=stealth',shorten >=1pt,thick,auto,node distance=1.5 cm, scale = 0.8, transform shape,
	->,>=stealth,inner sep=2pt,
				every transition/.style={draw=red,fill=red,minimum width=1mm,minimum height=3.5mm},
				every place/.style={draw=blue,fill=blue!20,minimum size=7mm}]
				\tikzstyle{emptynode}=[inner sep=0,outer sep=0]
				\node[place, label=above:$p_1$,tokens=1] (p1) {};
				\node[transition, label=above:$a$,left of = p1] (t1) {}
				edge[pre, bend left] (p1)
				edge[post, bend right] (p1);
				\node[transition, label=above:$b$,right of = p1] (t3){}
				edge[pre] (p1);
				\node[transition, label=above:$b$,above of = t3] (t2) {}
				edge[pre] (p1);
				\node[place, label=above:$p_2$,right of = t2] (p2) {}
				edge[pre] (t2);
				\node[place, label=above:$p_3$,right of = t3] (p3) {}
				edge[pre] (t3);
				\node[transition, label=above:$b$,right of = p2] (t4) {}
				edge[pre, bend left] (p2)
				edge[post, bend right] (p2);
				\node[transition, label=above:$a$,right of = p3] (t5) {}
				edge[pre, bend left] (p3)
				edge[post, bend right] (p3);
			\end{tikzpicture}
			\caption{A labeled P/T net $G$ that is eventually strongly detectable, but not strongly detectable.}
			\label{fig5_Det_PN}
		\end{center}
	\end{figure}
\end{proof}

\subsubsection{Finite automata} 

We next use the concurrent composition, the observation automaton, and the bifurcation
automaton of a finite automaton $\Scal$
defined by \eqref{eqn48_Det_PN}, \eqref{eqn70_Det_PN}, and \eqref{eqn71_Det_PN}
to verify its strong detectability 
and eventual strong detectability without any assumption.
These results extend the related 
results given in \cite{Shu2007Detectability_DES,Shu2011GDetectabilityDES}, since the verification methods 
for strong detectability
in these papers generally do not apply to finite automata that do not satisfy Assumption
\ref{assum1_Det_PN}.

\begin{theorem}\label{thm7_Det_PN}
	The strong detectability of finite automata 
	can be verified in polynomial time.
\end{theorem}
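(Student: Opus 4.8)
The plan is to give a checkable characterization of non-strong-detectability in terms of the concurrent composition $\CCa(\Scal)$ of~\eqref{eqn48_Det_PN} and the observation automaton $\Obs(\Scal)$ of~\eqref{eqn70_Det_PN}, then read it off in linear time from their strongly connected components (SCCs). First I would dispose of the degenerate case: if $\LM^{\omega}(\Scal)=\emptyset$ then $\Scal$ is strongly detectable vacuously, and by (the proof of) Proposition~\ref{prop8_Det_PN} this is testable in linear time; so assume $\LM^{\omega}(\Scal)\ne\emptyset$. Call a state $(u,v)$ of $\CCa(\Scal)$ \emph{ambiguous} if $u\ne v$, so that reaching $(u,v)$ by a label sequence $\s$ witnesses $|\Mt(\Scal,\s)|>1$; and call a reachable state \emph{pumpable} if it is reachable in $\Acc(\CCa(\Scal))$ from some SCC carrying at least one observable transition, equivalently if it can be reached by label sequences of unbounded length.

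The characterization I would establish is: $\Scal$ is not strongly detectable if and only if $\Acc(\CCa(\Scal))$ contains a pumpable ambiguous state $(u,v)$ such that in $\Obs(\Scal)$ an observable cycle is reachable from $u$ or from $v$ (i.e.\ $u$ or $v$ can generate an infinite label sequence). For the ``if'' part I would reach $(u,v)$ through the observable-carrying SCC and pump its cycle, obtaining ambiguous prefixes of arbitrarily large length, and then continue from whichever of $u,v$ generates an infinite label sequence; each such prefix $\s'$ then satisfies $|\Mt(\Scal,\s')|>1$ and is a prefix of some $\s\in\LM^{\omega}(\Scal)$, so no uniform bound $k$ can exist.

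For the ``only if'' part, non-strong-detectability gives, for every $k$, some $\s_k\in\LM^{\omega}(\Scal)$ with a prefix $\s_k'$ of length exceeding $k$ and $|\Mt(\Scal,\s_k')|>1$. Since $\s_k'$ extends to an infinite label sequence, some $w_k\in\Mt(\Scal,\s_k')$ generates an infinite observation; pairing it with any other state $u_k\in\Mt(\Scal,\s_k')$ yields a reachable ambiguous state $(u_k,w_k)$ of $\CCa(\Scal)$ reached by $\s_k'$. Finiteness of $\CCa(\Scal)$ forces a single ambiguous state $(u,w)$ to recur for unboundedly many $k$: its reachability at unbounded observation length produces an observable cycle on a path from $X_0'$ to it, so it is pumpable, while the fixed component $w$ still generates an infinite observation, i.e.\ an observable cycle reachable from $w$ in $\Obs(\Scal)$. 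This is exactly the asserted structure.

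I expect the main obstacle to be separating, without Assumption~\ref{assum1_Det_PN}, the two distinct roles that observable cycles must play: the cycle that makes the ambiguous point reachable at unbounded length must live in the \emph{paired} automaton $\CCa(\Scal)$ (two runs sharing an observation up to the ambiguity), whereas the cycle certifying that the ambiguous prefix lies on an infinite observation may use a state of $\Mt(\Scal,\s')$ outside the ambiguous pair and is therefore detected in the \emph{single}-system automaton $\Obs(\Scal)$. The device that reconciles them is to absorb the extending state into the pair, which is exactly what makes the pumpable-plus-$\Obs$-extension condition both necessary and sufficient. The characterization then becomes an algorithm: build $\CCa(\Scal)$ and $\Obs(\Scal)$, compute their SCCs, mark pumpable states by reachability from observable-carrying SCCs of $\Acc(\CCa(\Scal))$, mark the states of $\Obs(\Scal)$ from which an observable cycle is reachable, and test whether some pumpable ambiguous $(u,v)$ has a so-marked component. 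Every step is linear in the sizes of the two automata, so the cost is dominated by $\CCa(\Scal)$, giving the claimed $O(s^4e^2)$ bound.
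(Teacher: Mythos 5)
Your proposal is correct and follows essentially the same route as the paper's proof: the paper also characterizes non-strong-detectability by an ambiguous state of $\Acc(\CCa(\Acc(\Scal)))$ reachable from a cycle with positive-length label sequence (your ``pumpable'' condition) from one of whose components a cycle with nonempty label is reachable in $\Scal$, and verifies this via strongly connected components of the concurrent composition and the observation automaton in the same $O(|X|^4|T|^2)$ time. The pigeonhole/pumping argument in your ``only if'' direction and the handling of the $\LM^{\omega}(\Scal)=\emptyset$ case also match the paper's treatment.
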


\begin{proof}
	Consider a finite automaton ${\mathcal S}=(X,T,X_0,\to,\Sig,\ell)$ and another finite
	automaton $\Acc(\CCa(\Acc(\Scal)))=(X',T',X_0',\to')$ that is the accessible part of the concurrent
	composition of $\Acc(\Scal)$.
	We claim that $\Scal$ is not strongly detectable if and only if in $\Acc(\CCa(\Acc(\Scal)))$, 
	\begin{subequations}\label{eqn55_Det_PN}
		\begin{align}
			&\text{there exists a transition sequence }\nonumber\\
			&x_0'\xrightarrow[]{s_1'}x_1'\xrightarrow[]{s_2'}x_1'\xrightarrow[]{s_3'}
			x_2'
			\text{ satisfying}\label{eqn55_1_Det_PN}\\
			&x_0'\in X_0';x_1',x_2'\in X'; s_1',s_2',s_3'\in(T')^*;\ell(s_2')\in\Sig^+;x_2'(L)\ne x_2'(R);\\
			&\text{and in }\Scal,\text{ there exists a cycle with nonempty label sequence }
			\text{reachable from }x_2'(L).
		\end{align}
	\end{subequations}

	If \eqref{eqn55_Det_PN} holds, then 
	in $\Scal$, for every $n\in\Z_{+}$, there exists a transition sequence 
	\begin{equation}\label{eqn56_Det_PN}
		x_0'(L)\xrightarrow[]{s_1'(L)}x_1'(L)\xrightarrow[]{(s_2'(L))^n}x_1'(L)\xrightarrow[]{s_3'(L)}
		x_2'(L)
	\end{equation}
	such that $|\Mt(\Scal,\ell(s_1'(L)(s_2'(L))^ns_3'(L)))|>1$ and at $x_2'(L)$ there is an 
	infinite-length transition sequence with infinite-length label sequence.
	Hence $\Scal$ is 
	not strongly detectable. 

	If $\Scal$ is not strongly detectable, then for every $n\in\Z_{+}$, there exists a transition
	sequence $x_0\xrightarrow[]{s_1}x_1\xrightarrow[]{s_2}$ such that $x_0\in X_0$, $x_1\in X$,
	$s_1\in T^*$, $s_2\in T^{\omega}$, $|\ell(s_1)|>n$, $\ell(s_2)\in\Sig^{\omega}$, and
	$|\Mt(\Scal,\ell(s_1))|>1$. Then there is a transition sequence such that the sequence and
	$x_0\xrightarrow[]{s_1}x_1$ combine to \eqref{eqn55_1_Det_PN}
	if $n$ is sufficiently large by the finiteness of $X$. 
	Also by the finiteness of $X$, there exists a cycle with nonempty label sequence
	reachable from $x_1$. Hence \eqref{eqn55_Det_PN} holds.
	
	Next we show that \eqref{eqn55_Det_PN} can be verified in polynomial time. See Fig. \ref{fig20_Det_PN}
	for a sketch. 
	\begin{enumerate}
		\item Compute $\Acc(\CCa(\Acc(\Scal)))=(X',T',X_0',\to')$.
		\item Compute the set $X_e'\subset X'$ of states  $(x_2,\bar x_2)$ with $x_2\ne\bar x_2$ that are reachable from a cycle of $\Acc(\CCa(\Acc(\Scal)))$ 
			with positive-length label sequence. 
		\item Compute $$\overline X_0:=\{x\in X|(\exists x'\in X)[(x\ne x')\wedge(\text{either }(x,x')\text{ or }(x',x) 
			\text{ belongs to\\}X_e')]\}.$$
		\item Construct finite automaton $\Scal'$ from $\Acc(\Scal)$
			by replacing all initial states with all states of $\overline X_0$, then check whether $\LM^{\omega}
			(\Acc(\Scal'))=\emptyset$. 
	\end{enumerate}
	
	For the second step, one can firstly compute all strongly connected components of
	$\Obs(\Acc(\CCa(\Acc(\Scal))))$; the in 
	each component that contains an observable transition,
	choose an arbitrary state, and put all these states into a set $X_{oc}'$;
	thirdly, by searching starting from $X_{oc}'$ all reachable states,
	one can find all states $(x_2',\bar x_2')$ with $x_2'\ne\bar x_2'$ that are reachable 
	from $X_{oc}'$. The set of all these states $(x_2',\bar x_2')$ is exactly $X_e'$. 
	Hence the second step costs linear time of $\CCa(\Scal)$.
	
	The third step costs time $O(|X|^2)$.
	
	Note that the set $X_e$ in Fig. \ref{fig20_Det_PN} denotes the set of 
	states of $X$ that are reachable from $\overline X_0$ and belong to a cycle with positive-length label 
	sequence. Hence $X_e\ne\emptyset$ if and only if $\LM^{\omega}(\Acc(\Scal'))\ne\emptyset$.
	By Proposition \ref{prop8_Det_PN}, it takes linear time of $\Scal$ to check
	this condition by computing $\Obs(\Acc(\Scal'))$. One sees that $\LM^{\omega}(\Acc(\Scal'))\ne\emptyset$
	if and only if \eqref{eqn55_Det_PN} holds. Hence the fourth step consumes linear time of 
	$\Scal$.
	
	Then verifying strong detectability for $\Scal$
	takes linear time of $\CCa(\Scal)$, i.e., at most $O(|X|^4|T|^2)$.

		\begin{figure}
        \centering
\begin{tikzpicture}[>=stealth',shorten >=1pt,auto,node distance=1.5 cm, scale = 1.0, transform shape,
	->,>=stealth,inner sep=2pt,state/.style={shape=circle,draw,top color=red!10,bottom color=blue!30}]

	\node[elliptic state] (x0) {$\begin{matrix}x_0\\\bar x_0\end{matrix}$};
	\node[elliptic state] (x1) [right of = x0] {$\begin{matrix}x_1\\\bar x_1\end{matrix}$};
	\node[elliptic state] (x1') [right of = x1] {$\begin{matrix}x_1\\\bar x_1\end{matrix}$};
	\node[elliptic state] (x2) [right of = x1'] {$\begin{matrix}x_2\\\nparallel\\\bar x_2\end{matrix}$};
	\node[elliptic state] (x3) [right of = x2] {$x_3$};
	\node[elliptic state] (x3') [right of = x3] {$x_3$};

	\node(empty) (x0_)  [below of = x0] {}; 
	\node(empty) (x1_) [right of = x0_] {$X_{oc}'$};
	\node(empty) (x1'_) [right of = x1_] {$X_{oc}'$};
	\node(empty) (x2_) [right of = x1'_] {$X_{e}'$};
	\node(empty) (x3_) [right of = x2_] {$X_{e}$};
	\node(empty) (x3'_) [right of = x3_] {$X_{e}$};
	
	\path [->]
	(x0) edge (x1)
	(x1) edge node {$+$} (x1')
	(x1') edge (x2)
	(x2) edge (x3)
	(x3) edge node {$+$} (x3')
	;

        \end{tikzpicture}
		\caption{A sketch for verifying \eqref{eqn55_Det_PN}.}
		\label{fig20_Det_PN}
	\end{figure}

\end{proof}

\begin{example}\label{exam5_Det_PN}
	Recall the finite automaton $\Scal$ in Example \ref{exam4_Det_PN} (in the left part of Fig. 
	\ref{fig19_Det_PN}). 
	Following the procedure in the proof of Theorem \ref{thm7_Det_PN}, we have
	$X'_{oc}=\{(s_0,s_0),(s_1,s_1)\}$, $X_e'=\{(s_1,s_2),(s_2,s_1)\}$, $\overline X_0=
	\{s_1,s_2\}$. Replace all initial states of its observation automaton (shown in the left part of
	Fig. \ref{fig24_Det_PN}) by states of $\overline X_0$, one then has the corresponding automaton
	$\Scal'$ generates a nonempty $\omega$-language, since a cycle $s_1\xrightarrow[]{\bar\ep}s_1$
	is reachable from $s_1$ in $\Obs(\Acc(\Scal'))$. Then $\Scal$ is not strongly detectable.
\end{example}

%
%
%
%
%

\begin{theorem}\label{thm8_Det_PN}
	The eventual strong detectability of finite automata 
	can be verified in polynomial time.
\end{theorem}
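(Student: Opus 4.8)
The plan is to mirror the treatment of strong detectability in Theorem~\ref{thm7_Det_PN}: reduce the \emph{negation} of eventual strong detectability to a reachability-plus-cycle condition on the concurrent composition $\Acc(\CCa(\Acc(\Scal)))$, and then decide that condition by strongly-connected-component analysis in polynomial time. First, if $\LM^{\omega}(\Scal)=\emptyset$ then $\Scal$ is vacuously eventually strongly detectable; by Proposition~\ref{prop8_Det_PN} this is checkable in linear time, so I assume $\LM^{\omega}(\Scal)\ne\emptyset$. By Definition~\ref{def5_Det_PN}, $\Scal$ is not eventually strongly detectable if and only if there is a \emph{single} $\s\in\LM^{\omega}(\Scal)$ with $|\Mt(\Scal,\s')|>1$ for infinitely many prefixes $\s'\sqsubset\s$. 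A pigeonhole argument over the finitely many pairs of states of $\Scal$ shows this is equivalent to: there exist $\s\in\LM^{\omega}(\Scal)$ and a \emph{disagreeing} state $d=(a,b)$ of $\CCa(\Scal)$ (i.e. $d(L)=a\ne b=d(R)$) that is reachable in $\CCa(\Scal)$ by infinitely many prefixes of $\s$. This equivalence is the target lemma, and it is the analogue of \eqref{eqn55_Det_PN} for this property.

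Next I would translate the target lemma into a finite, checkable condition. The requirement that disagreement persist along \emph{one} infinite observation is a recurrence (B\"uchi-type) condition, and the key point is that it splits into two mechanisms. In the first, two runs of $\Scal$ producing $\s$ stay distinct infinitely often; by finiteness of $\CCa(\Scal)$ this is exactly the existence of a reachable cycle of $\Acc(\CCa(\Acc(\Scal)))$ with positive-length label that passes through a disagreeing state, which is detected just as in Theorem~\ref{thm7_Det_PN} by computing the strongly connected components of $\Obs(\Acc(\CCa(\Acc(\Scal))))$ and testing which of them contain both an observable transition and a disagreeing state. In the second mechanism the ambiguity is caused by a witness branch that need not continue: the reference run produces $\s$ by cycling on a positive-label cycle, while at infinitely many prefixes a disagreeing state $(c,b)$ whose witness component $b$ is not required to be live is re-reached with the current label. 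To capture this uniformly I would work in a product of $\CCa(\Scal)$ with the positive-label cyclic (ultimately periodic) structure of the reference run, equivalently an appropriate product with $\Obs(\Acc(\Scal))$, thereby reducing ``re-reached by infinitely many prefixes of a fixed ultimately periodic $\s$'' to an ordinary reachability-and-cycle question on a polynomial-size graph.

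The forward direction of the target lemma is routine: given the cycle or recurrence witness, looping it forever yields an infinite observation in $\LM^{\omega}(\Scal)$ along which the disagreeing state is visited at infinitely many prefixes, so $|\Mt(\Scal,\s')|>1$ infinitely often and eventual strong detectability fails. For the algorithm I would (i) build $\Acc(\CCa(\Acc(\Scal)))$, which has $O(|X|^{2})$ states and $O(|X|^{4}|T|^{2})$ transitions as recorded in Section~\ref{subsec:ISDofFA}; (ii) compute its strongly connected components via $\Obs$, exactly as in the second step of the proof of Theorem~\ref{thm7_Det_PN}, to test the first mechanism; and (iii) run the auxiliary product reachability test for the second mechanism. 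Each step is linear in the size of $\CCa(\Scal)$, so the whole procedure runs in $O(|X|^{4}|T|^{2})$ time, matching the bound claimed in Table~\ref{tab2:_Det_PN}.

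The hard part is the converse of the target lemma together with the correctness of the second mechanism. Establishing it requires reconstructing a single infinite observation along which the disagreement recurs, starting only from the fact that ambiguity occurs at infinitely many \emph{a priori unrelated} prefixes; this is a finiteness argument combining pigeonhole over the states of $\CCa(\Scal)$ with a K\"onig/Dickson-type extraction, and it must correctly accommodate the subtle case in which the ambiguity is produced by a dead (or otherwise non-continuing) witness branch. This case already occurs for a nondeterministic automaton with a self-loop $s_0\xrightarrow[]{a}s_0$ together with an exit $s_0\xrightarrow[]{a}d$ to a deadlock $d$: here $\LM^{\omega}(\Scal)=\{a^{\omega}\}$, yet $|\Mt(\Scal,a^{n})|=2$ for all $n$, so eventual strong detectability fails even though $\CCa(\Scal)$ has no positive-label cycle through a disagreeing state. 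Thus the naive analogue of \eqref{eqn55_Det_PN} is insufficient, and the genuine difficulty is to prove that the second, synchronous-recurrence mechanism is both necessary and verifiable in polynomial time.
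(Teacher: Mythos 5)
Your overall architecture matches the paper's: dispose of $\LM^{\omega}(\Scal)=\emptyset$ first, reduce the negation to a recurrence condition on $\Acc(\CCa(\Scal))$, and split that condition into two mechanisms, the first of which (a cycle with positive-length label through a disagreeing state, found by SCC analysis of $\Obs(\Acc(\CCa(\Scal)))$) is exactly the paper's Item \eqref{item13_Det_PN} and condition \eqref{eqn63_Det_PN}. Your deadlock example correctly shows that this first mechanism alone is insufficient, which is the right diagnosis. However, there is a genuine gap: you never give a concrete, decidable formulation of the second mechanism. ``Work in a product of $\CCa(\Scal)$ with the positive-label cyclic (ultimately periodic) structure of the reference run'' is not a construction --- the reference run is existentially quantified and not a priori ultimately periodic, so there is no fixed object to take a product with. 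The paper's missing ingredient here is the \emph{bifurcation automaton} $\Bifur(\Scal)$ of \eqref{eqn71_Det_PN}: the second mechanism (Item \eqref{item14_Det_PN}) is formulated intrinsically in $\Scal$ as an infinite run along which, in every segment with positive-length label, some state $x_i$ satisfies $|\Mt(\{x_i\},\sigma)|>1$ for a prefix $\sigma$ of that segment's label; this collapses, via \eqref{eqn64_Det_PN}, to the purely structural test $X_{oc}\cap X_{bc}\ne\emptyset$ (a reachable state lying both on a cycle with an observable transition and on a cycle with a bifurcation transition), checkable by two SCC computations.

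You also explicitly defer the exhaustiveness proof --- that failure of eventual strong detectability forces one of the two mechanisms --- and this is the other substantive part of the paper's argument. The paper proves it by assuming \eqref{eqn57_Det_PN} holds while Item \eqref{item14_Det_PN} fails, and then assembling the infinitely many finite witness runs \eqref{eqn61_Det_PN} into a locally finite infinite tree; a K\"onig-type extraction produces a single infinite second run \eqref{eqn44_Det_PN} with the same label sequence as the reference run, which combines with it into the infinite disagreeing sequence of Item \eqref{item13_Det_PN}. Your proposal names the right proof technique (``pigeonhole with a K\"onig/Dickson-type extraction'') but does not carry it out, and in particular does not show why the dead-witness case is exactly absorbed by the bifurcation condition rather than by your undefined product. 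As it stands the proposal is a correct plan with the two hardest steps --- the precise second test and the completeness of the two-mechanism dichotomy --- left open, so it does not yet establish the theorem.
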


\begin{proof}
	Consider a finite automaton ${\mathcal S}=(X,T,X_0,\to,\Sig,\ell)$ and another finite
	automaton $\Acc(\CCa(\Scal))=(X',T',X_0',\to')$.
	Similarly to Theorem \ref{thm7_Det_PN}, we use $\Acc(\CCa(\Scal))$, $\Obs(\Acc(\Scal))$,
	and $\Bifur(\Acc(\Scal))$ to verify its eventual strong detectability.
	
	One observes by definition that $\Scal$ is not eventually strongly detectable if and only if
	\begin{subequations}\label{eqn57_Det_PN}
		\begin{align}
			&\text{there is an infinite transition sequence }x_0\xrightarrow[]{s_1}\text{ such that}\label{eqn57_1_Det_PN}\\
			&x_0\in X_0,\text{ }\ell(s_1)\in\Sig^{\omega}\text{ and for every }n\in\Z_{+},
			\text{ there is a prefix }\label{eqn57_2_Det_PN}\\
			&s_1'\text{ of }s_1\text{ satisfying }
			|\ell(s_1')|>n\text{ and }|\Mt(\Scal,\ell(s_1'))|>1.\label{eqn57_3_Det_PN}
		\end{align}
	\end{subequations}

	We claim that \eqref{eqn57_Det_PN} holds if and only if one of the following items holds:
	\begin{enumerate}[(1)]
		\item\label{item13_Det_PN} In $\Acc(\CCa(\Scal))$, there exists an infinite transition sequence
			\begin{align}\label{eqn58_Det_PN}
				x_0'\xrightarrow[]{s_1'}x_1'\xrightarrow[]{s_2'}\cdots
			\end{align}
			such that $x_0'\in X_0'$, for every $i\in\Z_{+}$, $s_i'\in(T')^*$,
			$\ell(s_i')\in\Sig^+$, and $s_i'(L)\ne s_i'(R)$.
		\item\label{item14_Det_PN} In $\Scal$, there exists an infinite transition sequence  
			\begin{align}\label{eqn59_Det_PN}
				x_0\xrightarrow[]{s_1}x_1\xrightarrow[]{s_2}x_2\xrightarrow[]{s_3}\cdots
			\end{align}
			such that $x_0\in X_0$, for all $i\in\Z_{+}$, $s_i\in T^*$, $\ell(s_{i+1})\in\Sig^+$,
			and $|\Mt(\{x_i\},\sigma)|>1$ for some $\sigma\sqsubset \ell(s_{i+1})$.
	\end{enumerate}

	It is trivial to see that either Item \eqref{item13_Det_PN} or Item \eqref{item14_Det_PN}
	implies \eqref{eqn57_Det_PN}.
	
	Conversely suppose that \eqref{eqn57_Det_PN} holds
	but Item \eqref{item14_Det_PN} does not hold. Then for $\Scal$,
	there is an infinite transition sequence
	\begin{align}\label{eqn60_Det_PN}
		\bar x_0\xrightarrow[]{\bar s_1}\bar x_1\xrightarrow[]{\bar s_2}\bar x_2\xrightarrow[]{\bar s_3}\cdots
	\end{align}
	satisfying \eqref{eqn57_2_Det_PN} and \eqref{eqn57_3_Det_PN} such that for every $i\in\Z_{+}$, 
	$\bar s_i\in T^*$, $\ell(\bar s_{i+1})\in\Sig^+$, and
	$|\Mt(\{\bar x_i\},\bar\sigma)|=1$ for all $\bar\sigma\sqsubset\ell(\bar s_{i+1})$.
	Fix such a sequence \eqref{eqn60_Det_PN}. Then for every $i\in\Z_{+}$, there exists a finite 
	transition sequence
	\begin{equation}\label{eqn61_Det_PN}
		\bar x_0^i\xrightarrow[]{\bar s_1^i}\cdots\xrightarrow[]{\bar s_{i}^i}\bar x_i^i
	\end{equation}
	such that $\bar x_0^i\in X_0$, for all $j\in[1,i]$, one has 
	$\ell(\bar s_j^i)=\ell(\bar s_j)$, $\bar x_j^i\ne\bar x_j$.
	Choose $k$ sufficiently large, by the finiteness of $X$, we obtain a transition sequence
	\begin{equation}\label{eqn62_Det_PN}
		\bar x_0'\xrightarrow[]{\bar s_1'}\cdots\xrightarrow[]{\bar s_{k}'}\bar x_k'
	\end{equation}
	of $\Acc(\CCa(\Scal))$ such that $\bar x_0'\in X_0'$, the left component and the right component of
	\eqref{eqn62_Det_PN} are a prefix of \eqref{eqn60_Det_PN} and \eqref{eqn61_Det_PN} with $i=k$;
	for all $i\in[1,k]$, $\bar x_i'(L)\ne \bar x_i'(R)$, and $\bar x_{l'}'=\bar x_{l''}'$
	for some $0< l'<l''\le k$. Then the prefix $\bar x_0'\xrightarrow[]{\bar s_1'}\cdots
	\xrightarrow[]{\bar s_{l'}'}\bar x_{l'}'\xrightarrow[]{\bar s_{l'+1}'}\cdots
	\xrightarrow[]{\bar s_{l''}'}\bar x_{l''}'$ of \eqref{eqn62_Det_PN} can be extended to
	an infinite transition sequence of the form \eqref{eqn58_Det_PN} by repeating $\bar x_{l'}'
	\xrightarrow[]{\bar s_{l'+1}'}\cdots\xrightarrow[]{\bar s_{l''}'}\bar x_{l''}'$
	for infinitely many times, i.e., Item \eqref{item13_Det_PN} holds.

	Next we show that both Item \eqref{item13_Det_PN} and Item \eqref{item14_Det_PN} can be verified 
	in polynomial time.

	Observe that Item \eqref{item13_Det_PN} holds if and only if in $\Acc(\CCa(\Scal))$, there is a finite transition
	sequence
	\begin{equation}\label{eqn63_Det_PN}
		\tilde x_0'\xrightarrow[]{\tilde s_1'}\tilde x_1'\xrightarrow[]{\tilde s_2'}\tilde x_1'
	\end{equation}
	with $\tilde x_0'\in X_0'$, $\tilde s_1',\tilde s_2'\in(T')^*$ such that $\ell(\tilde s_2')\in\Sig^+$
	and $\tilde x_1'(L)\ne\tilde x_1'(R)$.
	Next we verify \eqref{eqn63_Det_PN} in polynomial time.  See Fig. \ref{fig21_Det_PN} for a sketch.
	\begin{enumerate}
		\item Compute $\Obs(\Acc(\CCa(\Scal)))$.
		\item Compute all strongly connected components of $\Obs(\Acc(\CCa(\Scal)))$.
		\item Denote the set of states $(x,\bar x)$ of $\Obs(\Acc(\CCa(\Scal)))$
			with $x\ne \bar x$ that belong to a cycle with nonempty label sequence by 
			$X_c'$, check whether $X_{c}'\ne\emptyset$.
	\end{enumerate}
	Each of the first two steps costs linear time of $\CCa(\Scal)$.
	Note that $X_{c}'\ne\emptyset$
	if and only if \eqref{eqn63_Det_PN} holds.
	Observe that $X_c'\ne\emptyset$ if and only if in one of the obtained strongly connected
	components, there is an observable transition and a state $(x',\bar x')$ with $x'\ne\bar x'$.
	Hence the third step also costs linear time. Overall, verifying Item 
	\eqref{item13_Det_PN} costs linear time of $\CCa(\Scal)$, at most $O(|X|^4|T|^2)$.

		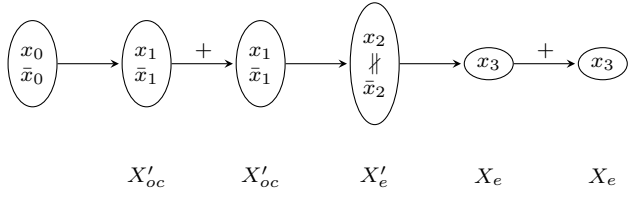
\begin{figure}
        \centering
\begin{tikzpicture}[>=stealth',shorten >=1pt,auto,node distance=1.5 cm, scale = 1.0, transform shape,
	->,>=stealth,inner sep=2pt,state/.style={shape=circle,draw,top color=red!10,bottom color=blue!30}]

	\node[elliptic state] (x0) {$\begin{matrix}x_0\\\bar x_0\end{matrix}$};
	\node[elliptic state] (x1) [right of = x0] {$\begin{matrix}x_1\\\nparallel\\\bar x_1\end{matrix}$};
	\node[elliptic state] (x1') [right of = x1] {$\begin{matrix}x_1\\\nparallel\\\bar x_1\end{matrix}$};

	\node(empty) (x0_)  [below of = x0] {}; 
	\node(empty) (x1_) [right of = x0_] {$X_{c}'$};
	\node(empty) (x1'_) [right of = x1_] {$X_{c}'$};
	
	\path [->]
	(x0) edge (x1)
	(x1) edge node {$+$} (x1')
	;

        \end{tikzpicture}
		\caption{A sketch for verifying \eqref{eqn63_Det_PN}.}
		\label{fig21_Det_PN}
	\end{figure}

	Also observe that Item \eqref{item14_Det_PN} holds if and only if in $\Scal$,  
	there exists a finite transition sequence  
	\begin{align}\label{eqn64_Det_PN}
		\tilde x_0\xrightarrow[]{\tilde s_1}\tilde x_1\xrightarrow[]{\tilde s_2}\tilde x_1
	\end{align}
	such that $\tilde x_0\in X_0$, $\tilde s_1,\tilde s_2\in T^*$, $\ell(\tilde s_2)\in\Sig^+$,
	and $|\Mt(\{\tilde x_1\},\sigma)|>1$ for some $\sigma\sqsubset\ell(\tilde s_{2})$.

	Next we show that \eqref{eqn64_Det_PN} can be verified in polynomial time.
	See Fig. \ref{fig22_Det_PN} for a sketch. 
	\begin{enumerate}
		\item Compute $\Obs(\Acc(\Scal))$ and $\Bifur(\Acc(\Scal))$.
		\item Compute $X_{oc}$ and $X_{bc}$, where $X_{oc}$ (resp. $X_{bc}$) is the set of 
			states of $\Acc(\Scal)$ that belong to a cycle containing an observable transition
			(resp. a bifurcation transition).
		\item Check whether $X_{oc}\cap X_{bc}=\emptyset$.
	\end{enumerate}
	Note that
	a state $x$ of $\Acc(\Scal)$ belongs to a cycle containing an observable transition 
	(resp. a bifurcation transition) if and only if $x$ is any state of any strongly connected component
	of $\Obs(\Acc(\Scal))$ (resp. $\Bifur(\Acc(\Scal))$) that contains an observable transition 
	(resp. a bifurcation transition).
	Then one has $X_{oc}\cap X_{bc}\ne\emptyset$ if and only if 
	\eqref{eqn64_Det_PN} holds. Hence it takes linear time of $\Scal$ to check whether 
	Item \eqref{item14_Det_PN} holds.

			\begin{figure}
        \centering
\begin{tikzpicture}[>=stealth',shorten >=1pt,auto,node distance=1.9 cm, scale = 1.0, transform shape,
	->,>=stealth,inner sep=2pt,state/.style={shape=circle,draw,top color=red!10,bottom color=blue!30}]

 	\node[elliptic state] (x0) {$x_0$};
	\node[elliptic state] (x1) [right of = x0] {$x_1$}; 
	\node[elliptic state] (x1') [right of = x1] {$x_1$};

	\node(empty) (x0_)  [below of = x0] {}; 
	\node(empty) (x1_) [right of = x0_] {$X_{oc}\cap X_{bc}$};
	\node(empty) (x1'_) [right of = x1_] {$X_{oc}\cap X_{bc}$};
	
	\path [->]
	(x0) edge (x1)
	(x1) edge node {$+$} node [below, sloped] {bifurcation} (x1')
	;

       \end{tikzpicture}
		\caption{A sketch for verifying \eqref{eqn64_Det_PN}.}
		\label{fig22_Det_PN}
	\end{figure}
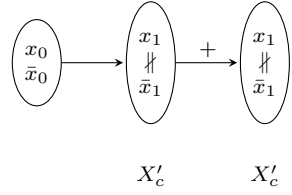
\end{proof}

\begin{example}\label{exam6_Det_PN}
	Recall the finite automaton $\Scal$ in Example \ref{exam5_Det_PN} (in the left part of Fig. 
	\ref{fig19_Det_PN}). 
	Following the procedure in the proof of Theorem \ref{thm8_Det_PN}, by Figs. \ref{fig19_Det_PN}
	and \ref{fig24_Det_PN}, we have $X_{oc}=\{s_0,s_1\}$, $X_{bc}=\emptyset$,
	$X_{oc}\cap X_{bc}=\emptyset$ (implying that Item \eqref{item14_Det_PN} does not hold), and
	$X_{c}'=\emptyset$ (implying that Item \eqref{item13_Det_PN} 
	does not hold either), then $\Scal$ is eventually strongly detectable. 
\end{example}

\begin{remark}\label{rem1_Det_PN}
	Let us analyse the computational complexity of using \cite[Theorem 5]{Shu2011GDetectabilityDES} to
	verify strong detectability of finite automata satisfying Assumption \ref{assum1_Det_PN}. In 
	\cite{Shu2011GDetectabilityDES}, for a finite automaton $\Scal$ (satisfying Assumption \ref{assum1_Det_PN}),
	a nondeterministic finite automaton $G_{det}$ with at most $|X|^2/2+|X|/2+1$ states and
	at most $(|X|^2/2+|X|/2+1)^2|T|$ transitions is constructed to verify its strong detectability,
	where every state of $G_{det}$ is a subset of states of $\Scal$ with cardinality $1$ or $2$, except 
	for the initial state of $G_{det}$ being a superset of $X_0$.
	The time consumption for computing $G_{det}$ is as follows:
	\begin{align}
		&|X|\underbrace{|T_{\ep}||X|}_{\subalign{&\text{compute initial state $Q_0$ of $G_{det}$ by traversing all $\ep$-transition}\\&\text{sequences from $X_0$, where $X_0\subset Q_0\subset X$}}}+\\
			&|Q_0|\underbrace{|T_o||X|}_{\subalign{&\text{compute}\\&Q'\subset X\\&\text{by traversing}\\&\text{all observable}\\&\text{transitions}\\&\text{from $Q_0$}}} +
			\underbrace{|\Sig||X||T_\ep||X|}_{\subalign{&\text{compute $Q''$}\\&\text{by traversing}\\&\text{all $\ep$-transition}\\&\text{sequences}\\&\text{from $Q'$}}} +
			\underbrace{|\Sig||X|^2}_{\subalign{&\text{split $Q''$ into}\\&\text{subsets of $X$}\\&\text{of cardinality $2$}\\&\text{to obtain}\\&\text{non-initial states}\\&\text{of $G_{det}$}}}+\label{eqn73_Det_PN}\\
			&\underbrace{|X|^2(2|T_o||X|+|\Sig||X||T_\ep||X|+|\Sig||X|^2)}_{\text{Repeat \eqref{eqn73_Det_PN}
			from states of $G_{det}$ of cardinality $2$}}+\\
			&\underbrace{|X|(|T_o||X|+|\Sig||X||T_\ep||X|+|\Sig||X|^2)}_{\text{Repeat \eqref{eqn73_Det_PN}
			from states of $G_{det}$ of cardinality $1$}},
	\end{align}
	i.e., at most $O(2|X|^3|T_o|+|X|^4|\Sig||T_\ep|+|X|^4|\Sig|)$. For the special case when all observable
	events can be directly observed studied in \cite{Shu2011GDetectabilityDES},
	the complexity is $O(2|X|^3|T_o|+|X|^4|T_o||T_\ep|+|X|^4|T_o|)$.
	Actually, this construction tracks sets of states of $\Scal$ with consistent observations,
	which is similar to the powerset construction that is of exponential size of $\Scal$.
	It is proved that $\Scal$ is strongly detectable if and only if every state of $G_{det}$
	reachable from a cycle is a singleton. This condition can be check in linear time of $G_{det}$
	by computing strongly connected components of $G_{det}$.

	However, this method generally does not apply to a finite automaton that does not satisfy Assumption
	\ref{assum1_Det_PN}. For example, let us consider the finite automaton $\Scal$ in the left part of
	Fig. \ref{fig19_Det_PN}. Remove the self-loop on $s_1$, and denote the new automaton 
	by $\bar\Scal$. Then one directly sees that $\LM^{\omega}
	(\bar\Scal)=\{a^{\omega}\}$, and $\bar\Scal$ is strongly detectable. However, in the corresponding $G_{det}$,
	which consists of a self-loop with label $a$ on $\{s_0\}$ and a transition from $\{s_0\}$ to $\{s_1,s_2\}$
	with label $b$, there is a state $\{s_1,s_2\}$ with cardinality $2$ reachable from a cycle,
	hence $\bar\Scal$ is not strongly detectable by \cite[Theorem 5]{Shu2011GDetectabilityDES}.
	Actually, the verification method does not apply to this example because, two deadlock states 
	$s_1$ and $s_2$ are not in any infinite-length transition sequence, but reachable from a state
	$s_0$ that belongs to an infinite-length transition sequence with infinite-length label sequence.
\end{remark}

By Proposition \ref{prop5_Det_PN}, we have shown that generally strong detectability is not equivalent 
to eventual strong detectability even for deterministic finite automata satisfying
Assumption \ref{assum1_Det_PN}.
However, using the method in \cite{Shu2011GDetectabilityDES},
we can prove that these two notions
are equivalent for deterministic finite automata satisfying Assumption \ref{assum1_Det_PN}
each of whose events can be directly observed (see Proposition \ref{cor2_Det_PN}).
Furthermore, by using the proofs of Theorems \ref{thm7_Det_PN} and \ref{thm8_Det_PN}, we can 
prove an even stronger result: these two notions
are equivalent for a deterministic finite automaton such that
each of its events can be directly observed (see Proposition \ref{cor3_Det_PN}).

\begin{proposition}\label{cor2_Det_PN}
	Strong detectability is equivalent to eventual strong detectability for deterministic finite
	automata satisfying Assumption \ref{assum1_Det_PN} each of whose events can be directly observed.
\end{proposition}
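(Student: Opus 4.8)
The plan is to prove the two implications separately. One direction, that strong detectability implies eventual strong detectability, is immediate from Definitions \ref{def4_Det_PN} and \ref{def5_Det_PN}: a uniform threshold $k$ serves as $k_{\s}$ for every $\s$. So all the content lies in the converse, which I would establish by contraposition: assuming $\Scal$ is not strongly detectable, I would exhibit a single infinite label sequence witnessing the failure of eventual strong detectability, using the Shu--Lin detector $G_{det}$ from \cite{Shu2011GDetectabilityDES} (as the statement suggests).

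First I would record the structural simplifications forced by the hypotheses. Since $\Scal$ is deterministic and each of its events can be directly observed, the labeling function is injective and there are no $\ep$-transitions; consequently every observed label determines a unique event, and from any set $Q\subset X$ of candidate states the successor estimate under a label $\s$ is simply the image of $Q$ under the (partial) transition function of the unique event $\ell^{-1}(\s)$. The key consequence I would isolate is \emph{monotonicity}: the cardinality $|\Mt(\Scal,\s')|$ is non-increasing as the prefix $\s'$ grows, and likewise the cardinality of states along any run of $G_{det}$ is non-increasing. In particular, once an estimate becomes a singleton it remains a singleton.

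Next I would invoke \cite[Theorem 5]{Shu2011GDetectabilityDES} (applicable because $\Scal$ satisfies Assumption \ref{assum1_Det_PN}): $\Scal$ is strongly detectable if and only if every state of $G_{det}$ reachable from a cycle is a singleton. Hence, under the assumption of non strong detectability, there is a two-element state $Q$ of $G_{det}$ reachable from a state $P$ lying on a cycle. Here monotonicity does the work: since $Q$ is reachable from $P$ we have $|Q|\le|P|$; as $|Q|=2$, the non-initial states of $G_{det}$ have cardinality at most $2$, and the (possibly oversized) initial state cannot lie on a cycle because cardinalities never grow back, the cycle state must itself satisfy $|P|=2$. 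Thus $G_{det}$ has a cycle passing through a two-element state. Finally I would pump this cycle: because there are no $\ep$-transitions every edge of the cycle carries an observable label, so traversing it infinitely often yields an infinite label sequence $\s\in\LM^{\omega}(\Scal)$ (the two tracked states give genuine infinite trajectories of $\Scal$, and Assumption \ref{assum1_Det_PN} guarantees an infinite run with an infinite-length observation). At each return to the two-element cycle state the corresponding prefix $\s'$ satisfies $|\Mt(\Scal,\s')|\ge 2$, and these returns occur at unboundedly large lengths, so $\s$ has infinitely many ambiguous prefixes and no $k_{\s}$ exists; hence $\Scal$ is not eventually strongly detectable.

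The step I expect to be the main obstacle is the monotonicity-plus-cycle argument that upgrades ``$Q$ reachable from a cycle'' to ``the cycle itself passes through a two-element state''. This is precisely where determinism and direct observability are essential --- they fail in general, consistently with Proposition \ref{prop5_Det_PN} --- and it also requires some care in the bookkeeping around the possibly oversized initial state of $G_{det}$, which must be excluded from all cycles.
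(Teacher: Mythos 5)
Your proof is correct and follows essentially the same route as the paper: both arguments hinge on Shu--Lin's detector $G_{det}$ together with the monotonicity observation that determinism plus direct observability forces $|q'|\le|q|$ along every transition of $G_{det}$, so that a non-singleton state reachable from a cycle forces a non-singleton state on the cycle itself. The only difference is presentational: the paper argues directly that the cycle-based characterization of eventual strong detectability implies the reachable-from-cycle characterization of strong detectability (citing the eventual-strong-detectability characterization as provable "using a similar procedure"), whereas you work contrapositively and explicitly pump the cycle to exhibit the witness sequence --- a detail the paper leaves implicit.
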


\begin{proof}
	Consider a deterministic finite automaton ${\cal S}=(X,T,X_0,\to,\Sig,\ell)$ satisfying Assumption 
	\ref{assum1_Det_PN} and each of its events can be directly observed. Construct the corresponding 
	nondeterministic finite automaton $G_{det}$ as in \cite[Theorem 5]{Shu2011GDetectabilityDES},
	which shows that $\Scal$ is strongly detectable if and only if in $G_{det}$,
	every state reachable from a cycle is a singleton. Actually, using similar procedure,
	one can prove that $\Scal$ is eventually strongly detectable if and only if in $G_{det}$,
	each state of each cycle is a singleton. Since 
	$\cal S$ is deterministic and each of its events can be directly observed,
	for each transition $q\xrightarrow[]{\s}q'$ in $G_{det}$,
	we have $|q'|\le|q|$. Hence if each state of each cycle
	in $G_{det}$ is a singleton, then each state reachable from a cycle
	is also a singleton. That is, eventual strong detectability is stronger than
	strong detectability, and hence they are equivalent.
\end{proof}

\begin{proposition}\label{cor3_Det_PN}
	Strong detectability is equivalent to eventual strong detectability for a deterministic finite automaton
	such that each of its events can be directly observed.
\end{proposition}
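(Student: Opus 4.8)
The plan is to establish the nontrivial direction by contraposition, showing that a deterministic finite automaton $\Scal$ all of whose events can be directly observed, if it is \emph{not} strongly detectable, is also not eventually strongly detectable; the reverse implication (strong $\Rightarrow$ eventual strong) is immediate from Definitions \ref{def4_Det_PN} and \ref{def5_Det_PN}. The whole argument will hinge on the special shape that $\CCa(\Scal)$ takes under the two hypotheses. First I would record that ``each event can be directly observed'' forces the labeling to be injective and every event observable, so $T_\ep=\emptyset$ and, in the construction \eqref{eqn48_Det_PN}, $\ell(\breve t)=\ell(\breve t')\in\Sig$ holds only when $\breve t=\breve t'$. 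Hence $T_o'=\{(t,t)\mid t\in T\}$ and every transition of $\CCa(\Scal)$ has the diagonal form $(x_L,x_R)\xrightarrow[]{(t,t)}(x_L',x_R')$ with $x_L\xrightarrow[]{t}x_L'$ and $x_R\xrightarrow[]{t}x_R'$. Combined with determinism of $\Scal$, this makes $\CCa(\Scal)$ itself deterministic and, crucially, makes the diagonal $D=\{(x,x)\mid x\in X\}$ \emph{invariant}: from a state $(y,y)$, reading $(t,t)$ sends both components to the unique $t$-successor of $y$, so the target again lies in $D$, and by induction every transition sequence issuing from a diagonal state stays in $D$.

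The second step is to feed this structure into the characterizations obtained in the proofs of Theorems \ref{thm7_Det_PN} and \ref{thm8_Det_PN}. Assume $\Scal$ is not strongly detectable. By the proof of Theorem \ref{thm7_Det_PN}, condition \eqref{eqn55_Det_PN} holds: in $\Acc(\CCa(\Acc(\Scal)))$ there is a transition sequence $x_0'\xrightarrow[]{s_1'}x_1'\xrightarrow[]{s_2'}x_1'\xrightarrow[]{s_3'}x_2'$ with $\ell(s_2')\in\Sig^+$ and $x_2'(L)\ne x_2'(R)$ (the remaining clause about a reachable cycle I will not even need). Since $x_2'$ is off the diagonal and is reachable from $x_1'$, invariance of $D$ forces $x_1'$ to be off the diagonal as well: were $x_1'=(y,y)\in D$, the continuation $x_1'\xrightarrow[]{s_3'}x_2'$ would remain in $D$, contradicting $x_2'(L)\ne x_2'(R)$. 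Therefore $x_1'(L)\ne x_1'(R)$, and the cycle $x_0'\xrightarrow[]{s_1'}x_1'\xrightarrow[]{s_2'}x_1'$ with $\ell(s_2')\in\Sig^+$ and off-diagonal $x_1'$ is exactly a witness of the form \eqref{eqn63_Det_PN}, i.e.\ Item \eqref{item13_Det_PN} in the proof of Theorem \ref{thm8_Det_PN}. Here I would note that the states reachable from $X_0\times X_0$ in $\Acc(\CCa(\Acc(\Scal)))$ and in $\Acc(\CCa(\Scal))$ coincide, so the witness lives in the automaton to which Item \eqref{item13_Det_PN} refers. Consequently $\Scal$ is not eventually strongly detectable, establishing the desired contrapositive and hence the equivalence.

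The conceptual core, and the step I expect to be the main obstacle to state cleanly, is the invariance of the diagonal and its interaction with the two hypotheses: it is precisely the injectivity of $\ell$ (which kills the off-diagonal observable pairs and all $\ep$-transitions) together with determinism (which forces a unique successor) that collapses the gap between the two notions. This is also where the hypotheses are genuinely used: the counterexample of Proposition \ref{prop5_Det_PN} (Fig.~\ref{fig5_Det_PN}) has several transitions sharing a label, so $\ell$ is not injective, $D$ is no longer invariant (distinct states may merge under a common label, or a diagonal state may branch through distinct transitions carrying the same label), and a diagonal cycle feeding an off-diagonal tail becomes possible, which is exactly the configuration that the present hypotheses rule out. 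The only residual bookkeeping is the matching of $\Acc(\CCa(\Acc(\Scal)))$ with $\Acc(\CCa(\Scal))$, which is routine since both retain only the pairs reachable from $X_0\times X_0$.
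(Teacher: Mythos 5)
Your proposal is correct and follows essentially the same route as the paper's proof: contraposition, using the witness \eqref{eqn69_Det_PN} (equivalently \eqref{eqn55_Det_PN}) from the proof of Theorem \ref{thm7_Det_PN}, observing that determinism plus direct observability makes $T'=\{(t,t)\mid t\in T\}$ so the diagonal of $\CCa(\Scal)$ is invariant, concluding $x_1'(L)\ne x_1'(R)$, and thereby exhibiting a witness for Item \eqref{item13_Det_PN} in the proof of Theorem \ref{thm8_Det_PN}. Your explicit statement and use of the diagonal-invariance lemma is just a cleaner articulation of the step the paper phrases as ``for each state of \eqref{eqn69_Det_PN}, its left component differs from its right component.''
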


\begin{proof}
	Consider a deterministic finite automaton ${\cal S}=(X,T,X_0,\to,\Sig,\ell)$ such that
	each of its events can be directly observed. 
	Construct another finite automaton $\Acc(\CCa(\Scal))=(X',T',X_0',\to')$, i.e.,
	the accessible part of the concurrent composition of $\Scal$.

	Since by definition strong detectability is stronger than eventual strong detectability,
	we only need to prove that if such an $\Scal$ is not strongly detectable, then it is not
	eventually strongly detectable either.

	Assume that $\Scal$ is not strongly detectable. Then by the proof of Theorem \ref{thm7_Det_PN}, one has
	$X'_{oc}$, $X'_e$, and $X_e$ (also see Fig. \ref{fig20_Det_PN}) are all nonempty,
	where $X_e$ is the set of states of $\Scal$
	to which some cycle with nonempty label sequence is reachable; $X'_e$ is the set of states $(x,x')$
	of $\Acc(\CCa(\Scal))$ such that $x\ne x'$ and at least one of $x$ and $x'$, say $x$, belong to $X_e$ or $X_e$ 
	is reachable 
	from $x$; $X'_{oc}$ is the set of states $(x'',x''')$ of $\Acc(\CCa(\Scal))$ such that 
	$(x'',x''')$ belong to a cycle with nonempty label sequence, and, either $(x'',x''')$ belong
	to $X'_e$ or $X'_e$ is reachable from $(x'',x''')$.

	Then in $\Acc(\CCa(\Scal))$, there is a transition sequence
	\begin{equation}\label{eqn69_Det_PN}
		x_0'\xrightarrow[]{s_1'}x_1'\xrightarrow[]{s_2'}x_1'\xrightarrow[]{s_3'}x_2'
	\end{equation} such that 
	$x_0'\in X_0'$, $s_1',s_2',s_3'\in (T')^*$, $\ell(s_2')\in\Sig^+$, and $x_2'(L)\ne x_2'(R)$.
	Since $\Scal$ is deterministic and each of its events can be directly observed, we have $T'=\{(t,t)|t\in T\}$,
	and for each state of \eqref{eqn69_Det_PN}, its left component differs from its right component.
	Then $x_1'(L)\ne x_1'(R)$, and the corresponding set $X_c'$ in the proof of Theorem 
	\ref{thm8_Det_PN} (also see Fig. \ref{fig21_Det_PN}) is nonempty,
	where $X_c'$ is the set of states $(x,x')$ of $\Acc(\CCa(\Scal))$ belonging
	to a cycle with nonempty label sequence and satisfying $x\ne x'$.
	That is, Item \eqref{item13_Det_PN} in the proof of Theorem \ref{thm8_Det_PN} holds. Hence $\Scal$
	is not eventually strongly detectable.

%
\end{proof}


\begin{remark}
	Similar to instant strong detectability, eventual strong detectability is also
	a uniform concept.
	That is, a labeled Petri net is eventually strongly detectable if and only if it is
	eventually strongly detectable when its initial marking is replaced by any of its reachable markings.
	Formally, for a labeled Petri net $G=(N,M_0,\Sig,\ell)$, $G$ is eventually strongly detectable
	if and only if $G'=(N,M,\Sig,\ell)$ is eventually strongly detectable for each $M\in\Rt(N,M_0)$.
\end{remark}

\begin{example}
	Let us consider a labeled P/T net $G$ shown in Fig. \ref{fig8_Det_PN}, where $a$ is the label
	of all transitions.
	We have $\LM^{\omega}(G)=a^{\omega}$,
	$|\Mt(G,a)|=2$, $|\Mt(G,a^{n+2})|=1$ for all $n\in\N$.
	Hence the net is strongly detectable, but not instantly strongly detectable.
	A deterministic finite automaton satisfying Assumption \ref{assum1_Det_PN}
	can be obtained from the net when $a$ is regarded as labels of all events.
	The obtained automaton is also strongly detectable, but not instantly strongly detectable.

		\begin{figure}[htbp]
		\tikzset{global scale/.style={
    scale=#1,
    every node/.append style={scale=#1}}}
		\begin{center}
			\begin{tikzpicture}[global scale = 1.0,
				>=stealth',shorten >=1pt,thick,auto,node distance=1.5 cm, scale = 0.8, transform shape,
	->,>=stealth,inner sep=2pt,
				every transition/.style={draw=red,fill=red,minimum width=1mm,minimum height=3.5mm},
				every place/.style={draw=blue,fill=blue!20,minimum size=7mm}]
				\tikzstyle{emptynode}=[inner sep=0,outer sep=0]
				\node[place, tokens=1] (p1) {};
				\node[transition, label=above:$a$,right of = p1] (t3){}
				edge[pre] (p1);
				\node[transition, label=above:$a$,above of = t3] (t2) {}
				edge[pre] (p1);
				\node[place, right of = t2] (p2) {}
				edge[pre] (t2);
				\node[place, right of = t3] (p3) {}
				edge[pre] (t3);
				\node[transition, label=above:$a$,right of = p3] (t4) {}
				edge[pre] (p2)
				edge[pre] (p3);
				\node[place, right of = t4] (p4) {}
				edge[pre] (t4);
				\node[transition, label=above:$a$, right of = p4] (t5) {}
				edge[pre,bend left] (p4)
				edge[post,bend right] (p4);
			\end{tikzpicture}
			\caption{A labeled P/T net $G$ that is strongly detectable, but not instantly strongly detectable.}
			\label{fig8_Det_PN}
		\end{center}
	\end{figure}

\end{example}

\begin{example}
	Consider a labeled P/T net $G$ as shown in Fig. \ref{fig9_Det_PN}, where $a,b$ are labels.
	We have $\LM^{\omega}(G)=a^{\omega}+
	a^*b^{\omega}$, $|\Mt(G,a^n)|=1$, $|\Mt(G,a^nb^m)|=2$ for all $m,n\in\Z_{+}$.
	Hence the net is weakly detectable, but not eventually strongly detectable.
	The deterministic finite automaton obtained from the net when $a$ and $b$ are regarded as
	labels of events is also weekly detectable, but not eventually strongly detectable.

		\begin{figure}[htbp]
		\tikzset{global scale/.style={
    scale=#1,
    every node/.append style={scale=#1}}}
		\begin{center}
			\begin{tikzpicture}[global scale = 1.0,
				>=stealth',shorten >=1pt,thick,auto,node distance=1.5 cm, scale = 0.8, transform shape,
	->,>=stealth,inner sep=2pt,
				every transition/.style={draw=red,fill=red,minimum width=1mm,minimum height=3.5mm},
				every place/.style={draw=blue,fill=blue!20,minimum size=7mm}]
				\tikzstyle{emptynode}=[inner sep=0,outer sep=0]
				\node[place, tokens=1] (p1) {};
				\node[transition, label=above:$a$,left of = p1] (t1) {}
				edge[pre, bend left] (p1)
				edge[post, bend right] (p1);
				\node[transition, label=above:$b$,right of = p1] (t3){}
				edge[pre] (p1);
				\node[transition, label=above:$b$,above of = t3] (t2) {}
				edge[pre] (p1);
				\node[place, right of = t2] (p2) {}
				edge[pre] (t2);
				\node[place, right of = t3] (p3) {}
				edge[pre] (t3);
				\node[transition, label=above:$b$,right of = p2] (t4) {}
				edge[pre, bend left] (p2)
				edge[post, bend right] (p2);
				\node[transition, label=above:$b$,right of = p3] (t5) {}
				edge[pre, bend left] (p3)
				edge[post, bend right] (p3);
			\end{tikzpicture}
			\caption{A labeled P/T net $G$ that is weakly detectable, but not eventually strongly detectable.}
			\label{fig9_Det_PN}
		\end{center}
	\end{figure}

\end{example}

\subsubsection{Labeled Petri nets}

We next characterize eventual strong detectability for labeled P/T nets.
Similar to instant strong detectability,
if a labeled Petri net $G$ satisfies $\LM^{\omega}(G)=\emptyset$, then it is 
eventually strongly detectable. Different from giving the decidability result of instant strong detectability
without any assumption (Theorem \ref{thm3_Det_PN}),
we will prove the decidability result of eventual strong detectability under 
\eqref{item6_Det_PN} of Assumption \ref{assum2_Det_PN}.

Checking strong detectability for labeled P/T nets is proved to be decidable
and EXPSPACE-hard in the size of a labeled P/T net
\cite{Masopust2018DetectabilityPetriNet} under Assumption \ref{assum1_Det_PN}
(it is not difficult to see that the assumption ``there does not exist an infinite unobservable 
sequence'' used in \cite{Masopust2018DetectabilityPetriNet} is equivalent to promptness by 
Dickson's lemma).
Here the size of a P/T net $G=(N=(P,T,Pre,Post),M_0)$ is $\ceil*{\log|P|}+\ceil*{\log|T|}+$
the size of $\{Pre(p,t)|p\in P,t\in T\}\cup\{Post(p,t)|p\in P,t\in T\}\cup\{M_0(p)|p\in P\}$,
where the last term means the sum of the lengths of the binary representations of the elements
of $\{Pre(p,t)|p\in P,t\in T\}\cup\{Post(p,t)|p\in P,t\in T\}\cup\{M_0(p)|p\in P\}$
\cite{Atig2009YenPathLogicPetriNet,Yen1992YenPathLogicPetriNet}.
Hence the size of a labeled P/T net can be defined as the sum of the size of its underlying P/T net
and that of its labeling function $\ell:T\to\Sig\cup\{\epsilon\}$, where the latter
is actually no greater than $|T|$.

Consider a labeled Petri net $G$. Consider a reachable marking $M_1$ 
of $G$ and a firing sequence $\psi=M_1[t_2\rangle M_2[t_3\rangle \cdots[t_l\rangle M_l$,
where $l>1$, $t_i$ is a transition of $G$ for every $i\in[2,l]$.
We say that {\it $\psi$ has a bifurcation} if there exists $k\in[2,l]$ such that 
in the concurrent composition $\CCn(G)$ of $G$,
there is a firing sequence $M_1'[t_2'\rangle M_2'[ t_3'\rangle\cdots[t_n'\rangle M_n'$ for some
$n>1$ and with all $t_2',\dots,t_n'$ being transitions of $\CCn(G)$ such that $M_1'|_{P_1}=M_1'|_{P_2}=M_1$,
$M_n'|_{P_1}=M_k$, the left component of $t_2'\dots t_n'$ equals $t_2\dots t_k$, and 
$M_{k'}'|_{P_1}\ne M_{k'}'|_{P_2}$ for some $k'\in[2,n]$.

For $G$, for two infinite firing sequences
\begin{subequations}
\begin{align}
	&M_0[\widetilde{t}_1\rangle \widetilde{M}_1[\widetilde{t}_2\rangle\cdots\text{ and }\label{eqn42_1_Det_PN}\\
	&M_0[\widehat{t}_1\rangle \widehat{M}_1[\widehat{t}_2\rangle\cdots,\label{eqn42_2_Det_PN}
\end{align}
\end{subequations}
where $\widetilde{t}_i,\widehat{t}_i$ are transitions of $G$ for all $i\in\Z_{+}$,
we call {\it they merge after a finite time} if in $\CCn(G)$, there is an infinite firing sequence
$M_0'[t_1'\rangle M_1'[t_2'\rangle\cdots$ with $t_1',t_2',\dots$ all being transitions of $\CCn(G)$
such that the left component and right component of $t_1't_2'\dots$ equal $\widetilde{t}_1
\widetilde{t}_2\dots$ and $\widehat{t}_1\widehat{t}_2\dots$, respectively, and 
there exists $k\in\Z_{+}$ such that $M_j'|_{P_1}={M}_j'|_{P_2}$ for all $j>k$.

\begin{theorem}\label{thm4_Det_PN}
	\begin{enumerate}[(1)]
		\item\label{item3_Det_PN}
			The eventual strong detectability of a labeled P/T net $G$ under \eqref{item6_Det_PN}
			of Assumption \ref{assum2_Det_PN}
			is decidable.
		\item\label{item4_Det_PN}
			Deciding whether a labeled P/T net $G$ with $\LM^{\omega}(G)\ne\emptyset$
			is eventually strongly detectable is
			EXPSPACE-hard. 
	\end{enumerate}
\end{theorem}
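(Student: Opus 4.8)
The plan is to treat the two parts separately, reusing the machinery built for Theorem~\ref{thm3_Det_PN} and the finite-automaton analysis of Theorem~\ref{thm8_Det_PN}. For part~\eqref{item3_Det_PN} I would first dispose of the degenerate case: if $\LM^{\omega}(G)=\emptyset$ then $G$ is vacuously eventually strongly detectable, and this is testable in EXPSPACE by Proposition~\ref{prop7_Det_PN}. So assume $\LM^{\omega}(G)\ne\emptyset$; under promptness (\eqref{item6_Det_PN} of Assumption~\ref{assum2_Det_PN}) every infinite firing sequence carries an infinite label sequence, which is exactly what the definition of eventual strong detectability quantifies over. The heart of the argument is a characterization of failure, parallel to Items~\eqref{item13_Det_PN} and~\eqref{item14_Det_PN} of Theorem~\ref{thm8_Det_PN}: I claim $G$ is \emph{not} eventually strongly detectable if and only if one of two patterns occurs. \emph{Type A:} there exist two infinite firing sequences of the forms \eqref{eqn42_1_Det_PN} and \eqref{eqn42_2_Det_PN} producing the same infinite label that do \emph{not} merge after a finite time (persistent ambiguity). \emph{Type B:} there is a single infinite firing sequence, producing an infinite label, whose finite prefixes have a bifurcation at infinitely many positions (recurring, possibly transient, ambiguity). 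The forward direction is immediate from the definitions; for the converse, given some $\s\in\LM^{\omega}(G)$ with infinitely many prefixes $\s'$ satisfying $|\Mt(G,\s')|>1$, I would distinguish whether the witnessing alternative runs can be chosen to extend to infinite runs producing the same label (yielding Type~A via the definition of merging) or are forced to die out, in which case a pigeonhole argument on the bifurcation positions along $\s$ yields Type~B.

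Next I would reduce each type to the satisfiability of a Yen's path formula and invoke Proposition~\ref{prop6_Det_PN} for decidability. Using Dickson's lemma exactly as in Proposition~\ref{prop7_Det_PN} and in the necessity argument for \eqref{eqn47_Det_PN}, each infinite witness folds into a finite repetitive witness. For Type~A this is a firing sequence of $\CCn(G)$ of the shape $M_0'[s_1'\rangle M_1'[s_2'\rangle M_2'$ with $M_1'\le M_2'$, with $s_2'$ carrying an observable transition, and with the split $M'|_{P_1}\ne M'|_{P_2}$ reproduced across the repetitive segment so that it survives pumping. For Type~B I would work in the extended composition $\CCne(G)$ of \eqref{eqn28_Det_PN}, looking for a segment anchored at a synchronized marking ($M|_{P_1}=M|_{P_2}$) whose left component is repetitive ($M_1'\le M_2'$ on $P_1$), carries an observable transition, and during which a $\phi$-driven right branch creates a fresh inequality $M'|_{P_1}\ne M'|_{P_2}$; pumping the left segment then reinstates a bifurcation infinitely often. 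In both cases the obstruction is that $M'|_{P_1}\ne M'|_{P_2}$ is not a Yen predicate, so—exactly as in the passage to \eqref{eqn26_Det_PN}—I would prepend an auxiliary clock place marked once at the start of the segment, converting the forbidden equality into a difference predicate $(M_2-M_1)|_{P_1}\ne(M_2-M_1)|_{P_2}$ between two marking variables, which \emph{is} expressible.

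The step I expect to be the main obstacle is this characterization together with its faithful encoding: one must verify that recurring transient bifurcations (Type~B) are genuinely not subsumed by the non-merging case (Type~A)—a subtlety absent in the finite-automaton setting where pumping is trivial—and one must phrase ``a new split is created inside the pumped segment'' as a single Yen's path formula over $\CCne(G)$ despite the non-predicate nature of $M|_{P_1}\ne M|_{P_2}$. The clock-place trick from Theorem~\ref{thm3_Det_PN} is what resolves the encoding, and since the resulting formulae need not be increasing, the reduction gives decidability (via reachability) but not an EXPSPACE upper bound, consistent with the statement.

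For part~\eqref{item4_Det_PN} I would reuse, essentially verbatim, the coverability reduction of the hardness proof of Theorem~\ref{thm3_Det_PN} (Fig.~\ref{fig4_Det_PN}): from a Petri net $G$ and destination marking $M$ build $G'$ by attaching the gadget with $t_0,t_1,t_2$, where $\ell(t_1)=\ell(t_2)=\s_G$, $\ell(t_0)=t_0$, and a self-loop $t_0$ on an initially marked place $p_0$. Then $\LM^{\omega}(G')\ne\emptyset$ because the $t_0$-loop emits $t_0^{\omega}$. If $M$ is not covered, $G'$ is instantly strongly detectable, hence strongly detectable, hence eventually strongly detectable. If $M$ is covered, there is a covering sequence $M_0[\s_1\rangle M_1\ge M$, and the two infinite firing sequences $\s_1 t_1 t_0^{\omega}$ and $\s_1 t_2 t_0^{\omega}$ share the label $\ell(\s_1)\s_G t_0^{\omega}$ yet reach markings differing in $p_1$ versus $p_2$ \emph{forever}, since only $t_0$ fires afterward; this is precisely a non-merging pair (Type~A), so infinitely many prefixes are ambiguous and $G'$ is not eventually strongly detectable. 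Thus $M$ is covered if and only if $G'$, which satisfies $\LM^{\omega}(G')\ne\emptyset$, is not eventually strongly detectable, and EXPSPACE-hardness follows from Proposition~\ref{prop4_Det_PN}.
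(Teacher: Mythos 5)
Your proposal matches the paper's proof in all essentials: the same degenerate-case check via Proposition~\ref{prop7_Det_PN}, the same dichotomy between a persistently non-merging pair in $\CCn(G)$ (the paper's Item~\eqref{item7_Det_PN}) and infinitely recurring bifurcations along a single run (Item~\eqref{item8_Det_PN}), the same folding of each infinite witness into a finite repetitive one via Dickson's lemma, the same clock-place trick over $\CCn(G)$ and $\CCne(G)$ to turn the non-predicate $M|_{P_1}\ne M|_{P_2}$ into a Yen-expressible difference of marking variables, and the identical coverability reduction for hardness. The one step you leave as a gesture --- exhaustiveness of the two failure types when only finitely many bifurcations occur --- is exactly where the paper works hardest, assembling the ambiguity witnesses into a locally finite infinite tree and extracting an infinite non-merging companion run by K\"onig's lemma (with promptness guaranteeing its label sequence is infinite), which is the rigorous form of the ``pigeonhole'' argument you sketch.
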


\begin{proof}
	\eqref{item3_Det_PN} Proof of the decidability result: 

	By Proposition \ref{prop7_Det_PN}, we first verify whether $G$ satisfies $\LM^{\omega}(G)\ne
	\emptyset$ in EXPSPACE. If no, then $G$ is eventually strongly detectable.
	Otherwise, continue the following procedure.

	Consider a labeled Petri net $G=(N=(P,T,Pre,Post),M_0,\Sig,\ell)$ with $\LM^{\omega}(G)$
	being nonempty.
	By definition, $G$ is not eventually strongly detectable if and only if
	there exists $\s\in \LM^{\omega}(G)$ such that for all $k\in\N$ there exists a prefix
	$\bar\s$ of $\s$ satisfying $|\bar\s|>k$ and $|\Mt(G,\bar\s)|>1$.
	We construct the concurrent composition $\CCn(G)=(N'=(P',T',Pre',Post'),M_0')$ of $G$
	as in \eqref{eqn6_Det_PN}.

	We claim that $G$ is not eventually strongly detectable if and only if one of the following
	two items holds (see Examples \ref{exam1_Det_PN} and \ref{exam2_Det_PN}):  
	\begin{enumerate}[(1)]
		\item\label{item7_Det_PN} In $\CCn(G)$, there exists an infinite firing sequence
			\begin{align}\label{eqn43_Det_PN}
				M_0'[s_1'\rangle M_1'[s_2'\rangle\cdots,
			\end{align}
			where for every $i\in\Z_{+}$, $s_i'$ contains a transition of $T_o'$, and $M_i'|_{P_1}\ne M_i'|_{P_2}$.
		\item\label{item8_Det_PN} In $G$, there exists an infinite firing sequence 
			\begin{align}\label{eqn36_Det_PN}
				M_0[s_1\rangle M_1[s_2\rangle M_2[s_3\rangle \cdots
			\end{align}
			such that $M_0[s_1\rangle M_1$ has a bifurcation,
			for each $i\in\Z_{+}$, $\ell(s_i)\in\Sig^+$,
			and $M_i[s_{i+1}\rangle M_{i+1}$ also has a bifurcation.
	\end{enumerate}

	Apparently if Item \eqref{item7_Det_PN} or Item \eqref{item8_Det_PN} holds,
	then $G$ is not eventually strongly detectable.

	Suppose that $G$ is not eventually strongly detectable. Then there exists an infinite firing sequence
	\begin{align}\label{eqn35_Det_PN}
		M_0[\bar s_1\rangle \overline{M}_1[\bar s_2\rangle \overline{M}_2[\bar s_3\rangle \cdots
	\end{align}
	such that $\ell(\bar s_i)\in\Sig^+$ and $|\Mt(G,\ell(\bar s_1\dots\bar s_i))|>1$ for all $i\in\Z_{+}$.
	Next we fix such a sequence \eqref{eqn35_Det_PN}.
	
	Furthermore, suppose that Item \eqref{item7_Det_PN} does not hold. Then \eqref{eqn35_Det_PN}
	and each infinite firing sequence of
	$G$ staring at $M_0$ and having the same label sequence as \eqref{eqn35_Det_PN} has will merge after a finite
	time, since the label sequence of \eqref{eqn35_Det_PN} is of infinite length.
	Next we prove that Item \eqref{item8_Det_PN} holds. If in \eqref{eqn35_Det_PN}, infinitely many of 
	$M_0[\bar s_1\rangle \overline{M}_1$, $\overline{M}_1[\bar s_2\rangle \overline{M}_2$,
	$\dots$ have bifurcations, then \eqref{eqn35_Det_PN}
	is a firing sequence satisfying the requirement in Item \eqref{item8_Det_PN}. Next we assume that there are
	only finitely many of them having bifurcations, and reach a contradiction.
	Without loss of generality, we assume that only $M_0[\bar s_1\rangle \overline{M}_1$ has a bifurcation.
	Then for each $k\in\Z_{+}$, 
	there exists a firing sequence $M_0[\widetilde{s}_k\rangle \widetilde{ M}_k$ such that
	$\ell(\widetilde{s}_k)\sqsubset
	\ell(\bar s_1\bar s_2\dots)$, $|\ell(\widetilde{s}_k)|>k$, and some prefix of \eqref{eqn35_Det_PN}
	and $M_0[\widetilde{s}_k\rangle \widetilde{ M}_k$
	can be combined to obtain a firing sequence $M_0'[\overline{s}_k'\rangle \overline{M}_k'$
	of $\CCn(G)$ such that the label sequence of the right component of $\overline{s}_k'$ equals $\ell(\widetilde{s}_k)$,
	$\overline{M}_k'|_{P_2}=\widetilde{M}_k$, and $\overline{M}_k'|_{P_1}\ne\overline{M}_k'|_{P_2}$.
	Collecting all such firing sequences
	$M_0[\widetilde{s}_k\rangle \widetilde{M}_k$, $k\in\Z_{+}$, we obtain a locally finite,
	infinite tree $\mathbb T$ with ${M}_0$ the root. Also collect all such markings $\widetilde{M}_k$, $k\in\Z_{+}$,
	to obtain a set $\mathbb{M}$. Observe that in $\mathbb{T}$, $M_0$ has infinitely many descendants of
	$\mathbb{M}$.
	Also observe in $\mathbb{T}$ that one of the finitely many children of $M_0$ also has
	infinitely many descendants of $\mathbb{M}$, denote such a child of $M_0$ by $\widehat{M}_1$, 
	then we obtain a firing 
	sequence $M_0[\widehat{t}_1\rangle \widehat{M}_1$ of $G$, where $\widehat{t}_1\in T$.
	Since $\mathbb{T}$ is locally finite,
	repeating the process of looking for $M_0[\widehat{t}_1\rangle \widehat{M}_1$,
	we can obtain an infinite firing sequence  
	\begin{equation}\label{eqn44_Det_PN}  
		M_0[\widehat{t}_1\rangle \widehat{M}_1[\widehat{t}_2\rangle\cdots
	\end{equation}
	of $G$ such that for each $i\in\Z_{+}$, $\widehat{M}_i$ has infinitely many descendants of $\mathbb{M}$ in $\mathbb{T}$.
	By \eqref{item6_Det_PN} of Assumption \ref{assum2_Det_PN}, we have \eqref{eqn44_Det_PN} is labeled by an 
	infinite-length label sequence. Also, since for each $i\in\Z_{+}$, $M_0[\widehat{t}_1\dots
	\widehat{t}_i\rangle\widehat{M}_i$ is a prefix of some path of $\mathbb T$, we have
	$\ell(\widehat{t}_1\widehat{t}_2\dots)=\ell(\bar s_1\bar s_2\dots)$. Then it is not difficult to see 
	that \eqref{eqn44_Det_PN} and \eqref{eqn35_Det_PN} can be combined into an infinite firing sequence of $\CCn(G)$
	satisfying the requirement in Item \eqref{item7_Det_PN}, which is a contradiction.

	
	
	Next we prove that the satisfiability of Item \eqref{item7_Det_PN} or Item \eqref{item8_Det_PN} are both decidable,
	completing the proof of the decidability result of eventual strong detectability.

	For Item \eqref{item7_Det_PN}:
	
	We claim that Item \eqref{item7_Det_PN} holds if and only if
	there exists a firing sequence
	\begin{equation}\label{eqn7_Det_PN}
		M_0'[s_1'\rangle M_1'[s_2'\rangle M_2'
	\end{equation}
	in $\CCn(G)$ satisfying
	\begin{equation}\label{eqn8_Det_PN}
		\begin{split}
			&(M_2'\ge M_1')\wedge(s_2'\text{ contains a transition in }T_o')
		\wedge
		(M_2'|_{P_1}\ne M_2'|_{P_2}),
		\end{split}
	\end{equation}
	where $T_o'\subset T'$ is shown in \eqref{eqn6_Det_PN}.
	That is, we next prove that Item \eqref{item7_Det_PN} holds if and only if
	\begin{equation}\label{eqn20_Det_PN}
		(\exists M_1',M_2')(\exists s_1',s_2')[\eqref{eqn7_Det_PN}\wedge\eqref{eqn8_Det_PN}]
	\end{equation} 
	is satisfied.

	``if'': Assume that for $\CCn(G)$, Eqn. \eqref{eqn20_Det_PN} holds. Then Item \eqref{item7_Det_PN} holds,
	because $M_2'|_{P_1}\ne M_2'|_{P_2}$, $s_2'$ contains a transition in $T_0$ (hence $\ell(s_2')$
	is of positive length),
	and $M_1'[s_2'\rangle M_2'$ is a repetitive firing sequence and can fire consecutively for infinitely many times.



	``only if'': Assume that Item \eqref{item7_Det_PN} holds, and fix a sequence \eqref{eqn43_Det_PN}.

	By Dickson's lemma, the set $\{M_0',M_1',\dots\}$ contains at most finitely many distinct minimal elements.
	Then there exists $k\in\Z_{+}$ such that $\{M_0',\dots,M_k'\}$ contains the maximal number of distinct 
	minimal elements of $\{M_0',M_1',\dots\}$. Hence there exists $0\le l\le k$ such that $M_l'\le M_{k+1}'$.
	Then the firing sequence $$M_0'[s_1'\dots s_l'\rangle M_l'[s_{l+1}'\dots s_{k+1}'\rangle M'_{k+1}$$ satisfies that
	$M'_{k+1}\ge M'_l$, $s_{l+1}'\dots s_{k+1}'$ contains at least one transition of $T_o'$, and $M'_{k+1}|_{P_1}\ne 
	M'_{k+1}|_{P_2}$, i.e., \eqref{eqn20_Det_PN} holds.

	In \eqref{eqn8_Det_PN}, ``$M_2'\ge M_1'$'' can be expressed as combination of marking predicates,
	``$s_2'\text{ contains a transition in }T_o'$'' is a transition predicate,
	only ``$M_2'|_{P_1}\ne M_2'|_{P_2}$'' is not a predicate. 

	Similarly to the proof of Theorem \ref{thm3_Det_PN},
    add two new places $p_0''$ and $p_1''$ into $\CCn(G)$, where initially $p_0''$ contains exactly $1$ token,
	but $p_1''$ contains no token; add one new transition $r_1''$, and arcs $p_0''\to r_1''\to p_1''$,
	both with weight $1$. Also, for each transition $t$ in $\CCn(G)$, add arcs $p_1''\to t\to p_1''$, both with
	weight $1$. Then we obtain a new Petri net $\CCn(G)'$.
	We have $\CCn(G)$ satisfies \eqref{eqn20_Det_PN} if and only if $\CCn(G)'$ satisfies the Yen's path formula
	\begin{equation}\label{eqn30_Det_PN}
		\begin{split}
		&(\exists M_1'',M_2'',M_3'')(\exists s_1'',s_2'',s_3'')[\\
		&(M_0''[s_1''\rangle M_1''[s_2''\rangle M_2''[s_3''\rangle M_3'')\wedge\\
		&(s_1''=r_1'')\wedge(M_3''\ge M_2'')\wedge (s_3''\text{ contains a transition of }T_o')\wedge\\
		&((M_3''-M_1'')|_{P_1}\ne (M_3''-M_1'')|_{P_2})],
		\end{split}
	\end{equation}
	where note that one always has $M_1''|_{P_1}=M_1''|_{P_2}$.

	Then by Proposition \ref{prop6_Det_PN},
	the satisfiability of \eqref{eqn20_Det_PN} is decidable, implying that the satisfiability of Item
	\eqref{item7_Det_PN} is decidable.

	Next we prove that the satisfiability of Item \eqref{item8_Det_PN} is decidable.

	We claim that for $G$, Item \eqref{item8_Det_PN} holds if and only if 
	\begin{subequations}\label{eqn39_Det_PN}
		\begin{align}
			&\text{there exists a firing sequence }M_0[\underline{s}_1\rangle \underline{M}_1[\underline{s}_2\rangle 
			\underline{M}_2\text{ satisfying}\label{eqn39_1_Det_PN}\\
			&\underline{M}_1\le \underline{M}_2,\label{eqn39_2_Det_PN}\\
			&\underline{s}_2\text{ contains an observable transition, and }\label{eqn39_3_Det_PN}\\
			&\underline{M}_1[\underline{s}_2\rangle \underline{M}_2\text{ contains a bifurcation}\label{eqn39_4_Det_PN}.
		\end{align}
	\end{subequations}

	Assume that for $G$, Item \eqref{item8_Det_PN} holds. Again by Dickson's lemma, 
	there exist $0\le l<k$ such that
	the firing sequence $M_0[s_1\dots s_l\rangle M_l[s_{l+1}\dots s_k\rangle M_k$ satisfies that
	$M_l\le M_k$, $s_{l+1}\dots s_k$ contains an observable transition, and $M_l[s_{l+1}\dots 
	s_k\rangle M_k$ has a bifurcation. That is, \eqref{eqn39_Det_PN} holds.

	Assume that \eqref{eqn39_Det_PN} holds. By \eqref{eqn39_2_Det_PN}, \eqref{eqn39_3_Det_PN},
	and \eqref{eqn39_4_Det_PN}, the sequence in  \eqref{eqn39_1_Det_PN}
	can be extended to an infinite firing sequence
	$$M_0[\underline{s}_1\rangle \underline{M}_1[\underline{s}_2\rangle \underline{M}_2 [\underline{s}_2\rangle
	(\underline{M}_2+(\underline{M}_2-\underline{M}_1))[\underline{s}_2\rangle\cdots[\underline{s}_2\rangle
	(\underline{M}_2+k(\underline{M}_2-\underline{M}_1))[\underline{s}_2\rangle\cdots$$ satisfying for each
	$l\in\Z_{+}$, one has $(\underline{M}_2+l(\underline{M}_2-\underline{M}_1))[\underline{s}_2\rangle
	(\underline{M}_2+(l+1)(\underline{M}_2-\underline{M}_1))$ has a bifurcation.
	That is, Item \eqref{item8_Det_PN} holds.

	Construct extended concurrent composition $$\CCne(G)=(N'''=(P''',T''',Pre''',Post'''),M_0''')$$ from $\CCn(G)$ as in 
	\eqref{eqn28_Det_PN}.
	
	Then for $G$, \eqref{eqn39_Det_PN} holds if and only if for $\CCne(G)$, 
	\begin{subequations}\label{eqn40_Det_PN}
		\begin{align}
			&\text{there exists a firing sequence }M_0'''[s_1'''\rangle M_1'''[s_2'''\rangle M_2'''[s_3'''\rangle M_3'''
			\text{ such that}\label{eqn40_1_Det_PN}\\
			&M_1'''\le M_3''',\label{eqn40_2_Det_PN}\\
			&s_3'''\text{ contains a transition }(t_1,*)\text{ with }\ell(t_1)\in\Sig,\label{eqn40_3_Det_PN}\\
			&M_2'''|_{P_1}\ne M_2'''|_{P_2},\label{eqn40_4_Det_PN}\\
			&s_1''',s_2'''\in(T')^*,\label{eqn40_5_Det_PN}\\
			&s_3'''\in(T'\cup T_{\phi}^1)^*,\label{eqn40_6_Det_PN}
		\end{align}
	\end{subequations}
	where we omit a similar proof for the equivalence compared to the previous claim.
	Among \eqref{eqn40_2_Det_PN}-\eqref{eqn40_6_Det_PN},
	only \eqref{eqn40_4_Det_PN} is not a predicate. Using a similar construction to the one that is used to 
	reduce the satisfiability of \eqref{eqn20_Det_PN} for $\CCn(G)$ to the satisfiability of a Yen's path formula
	for $\CCn(G)'$, we can reduce the satisfiability of \eqref{eqn40_Det_PN} to the satisfiability of a Yen's path formula
	for a new Petri net. Hence, the satisfiability of Item \eqref{item8_Det_PN} for $G$ is decidable.

	\eqref{item4_Det_PN} Proof of the hardness result:

	To prove conclusion \ref{item4_Det_PN} of Theorem \ref{thm4_Det_PN},
	we are given a Petri net
	$G=(N=(P,T,Pre,Post),M_0)$ and a destination marking $M\in\N^{P}$, and construct
	a labeled P/T net $G'=(N'=(P',T',Pre',Post'),M_0',T\cup\{\s_{G}\},\ell)$ as in
	\eqref{eqn9_Det_PN}. 
	Then similar to the proof of Theorem \ref{thm3_Det_PN},
	it is also clear that $G$ does not cover $M$ if and only if
	$G'$ is eventually strongly detectable. Hence conclusion \ref{item4_Det_PN} holds.
\end{proof}

\begin{example}\label{exam1_Det_PN}
	Consider a labeled Petri net $G$ shown in Fig. \ref{fig15_Det_PN}, where event $a$ can be directly 
	observed, but $b$ and $c$ share the same label $b$.
	One directly sees that 
	$\LM^{\omega}(G)=\{(ab)^{\omega}\}$, and $\Mt(G,(ab)^n)=\{(1,0),(0,0)\}$ for all $n\in\Z_{+}$.
	Hence $G$ is not weakly detectable, and hence not eventually strongly detectable. By its reachability graph 
	shown in Fig. \ref{fig16_Det_PN}, one sees that this net 
	satisfies Item \eqref{item8_Det_PN} in the proof of Theorem \ref{thm4_Det_PN}, but not 
	Item \eqref{item7_Det_PN} in the proof. However, the net in Fig. \ref{fig9_Det_PN}
	satisfies Item \eqref{item7_Det_PN} but not Item \eqref{item8_Det_PN}.

		\begin{figure}[htbp]
		\tikzset{global scale/.style={
    scale=#1,
    every node/.append style={scale=#1}}}
		\begin{center}
			\begin{tikzpicture}[global scale = 1.0,
				>=stealth',shorten >=1pt,thick,auto,node distance=1.5 cm, scale = 0.8, transform shape,
	->,>=stealth,inner sep=2pt,
				every transition/.style={draw=red,fill=red,minimum width=1mm,minimum height=3.5mm},
				every place/.style={draw=blue,fill=blue!20,minimum size=7mm}]
				\tikzstyle{emptynode}=[inner sep=0,outer sep=0]
				\node[place, tokens=1, label=above:$p_1$] (p1) {};
				\node[transition, label=above:$b(b)$,right of = p1] (t3){}
				edge[post] (p1);
				\node[transition, label=above:$a(a)$,above of = t3] (t2) {}
				edge[pre] (p1);
				\node[place, right of = t3,label=above:$p_2$] (p3) {}
				edge[post] (t3)
				edge[pre] (t2);
				\node[transition, label=above:$c(b)$,right of = p3] (t4) {}
				edge[pre] (p3);
			\end{tikzpicture}
			\caption{A labeled P/T net $G$.}
			\label{fig15_Det_PN}
		\end{center}
	\end{figure}
	
				\begin{figure}[htbp]
		\begin{center}
			\begin{tikzpicture}[->,>=stealth,node distance=2.5cm,
		point/.style={circle,inner sep=0pt,minimum size=2pt,fill=red},
		skip loop/.style={to path={-- ++(0,#1) -| (\tikztotarget)}},
		state/.style={
		rectangle,minimum size=6mm,rounded corners=3mm,
		very thin,draw=black!50,
		top color=white,bottom color=black!50,font=\ttfamily\bf},
		input/.style={
		rectangle,
		minimum size=6mm,
		very thin,
		font=\itshape\bf},
		output/.style={
		rectangle,rounded corners=5.5mm,drop shadow={opacity=0.5},
		minimum size=6mm,
		very thin,draw=black!50,
		top color=white, 
		bottom color=black!50,
		font=\itshape\bf}]
				\matrix[row sep=6mm,column sep=5mm] {
				  \node (11) [input] {$(1,0)$}; & \node (12) [input] {$(0,1)$};
				& \node (13) [input] {$(1,0)$}; & \node (14) [input] {$(0,1)$};
				& \node (15) [input] {$(1,0)$}; & \node (16) [input] {$\cdots$};\\
				& & \node (23) [input] {$(0,0)$}; & & \node (25) [input] {$(0,0)$};\\
		};

		\path 
		(11) edge node [above, sloped]  {$a(a)$} (12)
		(12) edge node [above, sloped]  {$b(b)$} (13)
		(13) edge node [above, sloped]  {$a(a)$} (14)
		(14) edge node [above, sloped]  {$b(b)$} (15)
		(15) edge node [above, sloped]  {$a(a)$} (16)
		(12) edge node [above, sloped]  {$c(b)$} (23)
		(14) edge node [above, sloped]  {$c(b)$} (25)
		;

			\end{tikzpicture}
			\caption{Reachability graph of the labeled Petri net in Fig \ref{fig15_Det_PN}.}
			\label{fig16_Det_PN}
		\end{center}
	\end{figure}

\end{example}

\begin{example}\label{exam2_Det_PN}
	Consider a labeled Petri net $G$ shown in Fig. \ref{fig17_Det_PN}. Its reachability graph is shown in Fig.
	\ref{fig18_Det_PN}, one has $\LM^{\omega}(G)=\{ab^{\omega}\}$.
	By the reachability graph, one sees that the net is not prompt, since there is a
	repetitive firing sequence in $(1,0,0,0,0)\xrightarrow[]{t_2(a)}(0,0,1,0,0)\xrightarrow[]{t_4(\epsilon)}
	(0,0,1,1,0)$
	labeled by the empty string.
	This net is not eventually strongly detectable, since for each $n\in\Z_{+}$, $|\Mt(G,ab^n)|=\infty>1$.
	However, the net does not satisfy Item \eqref{item7_Det_PN} or Item \eqref{item8_Det_PN}
	in the proof of Theorem \ref{thm4_Det_PN}.

		\begin{figure}[htbp]
		\tikzset{global scale/.style={
    scale=#1,
    every node/.append style={scale=#1}}}
		\begin{center}
			\begin{tikzpicture}[global scale = 1.0,
				>=stealth',shorten >=1pt,thick,auto,node distance=1.5 cm, scale = 0.8, transform shape,
	->,>=stealth,inner sep=2pt,
				every transition/.style={draw=red,fill=red,minimum width=1mm,minimum height=3.5mm},
				every place/.style={draw=blue,fill=blue!20,minimum size=7mm}]
				\tikzstyle{emptynode}=[inner sep=0,outer sep=0]
				\node[place, tokens=1,label=above:$p_1$] (p1) {};
				\node[transition, label=above:$t_1(a)$,right of = p1] (t1) {}
				edge[pre] (p1);
				\node[transition, label=above:$t_2(a)$,left of = p1] (t2) {}
				edge[pre] (p1);
				\node[place, right of = t1, label=above:$p_2$] (p2) {}
				edge[pre] (t1);
				\node[transition, label=above:$t_3(b)$, right of = p2] (t3) {}
				edge[pre, bend left] (p2)
				edge[post, bend right] (p2);
				\node[place, left of = t2, label=above:$p_3$] (p3) {}
				edge[pre] (t2);
				\node[transition, label=above:$t_4(\epsilon)$, left of = p3] (t4) {}
				edge[pre, bend left] (p3)
				edge[post, bend right] (p3);
				\node[place, left of = t4, label=above:$p_4$] (p4) {}
				edge[pre] (t4);
				\node[transition, label=above:$t_6(b)$, above of = t4] (t6) {}
				edge[pre] (p3);
				\node[place, left of = t6, label=above:$p_5$] (p5) {}
				edge[pre] (t6);
				\node[transition, label=below:$t_5(b)$, left of = p5] (t5) {}
				edge[pre] (p4)
				edge[post, bend left] (p5)
				edge[pre, bend right] (p5);
			\end{tikzpicture}
			\caption{A labeled P/T net $G$.}
			\label{fig17_Det_PN}
		\end{center}
	\end{figure}

			\begin{figure}[htbp]
		\begin{center}
			\begin{tikzpicture}[->,>=stealth,node distance=2.0cm,
		point/.style={circle,inner sep=0pt,minimum size=2pt,fill=red},
		skip loop/.style={to path={-- ++(0,#1) -| (\tikztotarget)}},
		state/.style={
		rectangle,minimum size=6mm,rounded corners=3mm,
		very thin,draw=black!50,
		top color=white,bottom color=black!50,font=\ttfamily\bf},
		input/.style={
		rectangle,
		minimum size=6mm,
		very thin,
		font=\itshape\bf},
		output/.style={
		rectangle,rounded corners=5.5mm,drop shadow={opacity=0.5},
		minimum size=6mm,
		very thin,draw=black!50,
		top color=white, 
		bottom color=black!50,
		font=\itshape\bf}]
				\matrix[row sep=6mm,column sep=5mm] {
				  \node (11) [input] {$(1,0,0,0,0)$}; & \node (12) [input] {$(0,1,0,0,0)$};\\
				& \node (22) [input] {$(0,0,1,0,0)$}; & \node (23) [input] {$(0,0,0,0,1)$};\\
				& \node (32) [input] {$(0,0,1,1,0)$}; & \node (33) [input] {$(0,0,0,1,1)$};\\
				& \node (42) [input] {$\vdots$}; & \node (43) [input] {$\vdots$};\\
				& \node (52) [input] {$(0,0,1,n,0)$}; & \node (53) [input] {$(0,0,0,n,1)$};\\
				& \node (62) [input] {$\vdots$}; & \node (63) [input] {$\vdots$};\\
		};

		\path 
		(11) edge node [above, sloped]  {$t_1(a)$} (12)
		(11) edge node [below, sloped]  {$t_2(a)$} (22)
		(22) edge node [below, sloped]  {$t_4(\epsilon)$} (32)
		(32) edge node [below, sloped]  {$t_4(\epsilon)$} (42)
		(42) edge node [below, sloped]  {$t_4(\epsilon)$} (52)
		(52) edge node [below, sloped]  {$t_4(\epsilon)$} (62)
		(12) edge [loop right] node {$t_3(b)$} (12)
		(22) edge node [above, sloped] {$t_6(b)$} (23)
		(32) edge node [above, sloped] {$t_6(b)$} (33)
		(52) edge node [above, sloped] {$t_6(b)$} (53)
		(63) edge node [above, sloped] {$t_5(b)$} (53)
		(53) edge node [above, sloped] {$t_5(b)$} (43)
		(43) edge node [above, sloped] {$t_5(b)$} (33)
		(33) edge node [above, sloped] {$t_5(b)$} (23)
		;
		
%
			\end{tikzpicture}
			\caption{Reachability graph of the labeled Petri net in Fig \ref{fig17_Det_PN}.}
			\label{fig18_Det_PN}
		\end{center}
	\end{figure}


\end{example}

\section{Conclusion}\label{sec:conc}

In this paper, we obtained a series of results on detectability of discrete-event systems.
We proposed one new notion of weak detectability and two new notions of strong detectability.
We proved that (1) the problem of
verifying weak approximate detectability of labeled Petri nets is undecidable;
(2) the problem of verifying instant strong detectability of labeled Petri nets is
decidable and EXPSPACE-hard;
(3) the problem of verifying eventual strong
detectability of labeled Petri nets is decidable and EXPSPACE-hard under the promptness 
assumption;
(4) for finite automata, the problem of verifying weak approximate detectability is PSPACE-complete,
and the other two properties can be verified in polynomial time.
(5) The relationships between thse notions of detectability were also characterized, and it was proved
that no two of them are equivalent.

Among the relationship between these notions,
the open question whether there exists a reduction from weak detectability to
weak approximate detectability is an important one. It is because the latter is a natural generalization of
the former. If the answer is yes, then the undecidability result of weak approximate detectability
for labeled Petri nets (Theorem \ref{thm2_Det_PN}) immediately follows from the undecidability result
of weak detectability of labeled Petri nets proved in \cite{Masopust2018DetectabilityPetriNet}.
Other variants of notions of detectability, e.g., instant weak detectability, different notions
of approximate detectability are left for further study.
Uniform versions of these notions of detectability are left for further study.
It is also an interesting topic to look for fast algorithms for verifying these notions for
(bounded) labeled Petri nets.





%
%

\end{document}